\renewcommand{\Re}{\mathbb R}
\newcommand{\BB}{\mathbf B}
\newcommand{\M}{\mathcal{M}}
\newcommand{\Sph}{\mathbb{S}}
\newcommand{\Sbf}{\mathbf{S}}
\newcommand{\F}{\mathcal{F}}
\newcommand{\E}{\mathcal{E}}
\newcommand{\T}{\mathcal{T}}
\DeclareMathOperator{\card}{card}
\DeclareMathOperator{\inter}{int}
\DeclareMathOperator{\conv}{conv}
\DeclareMathOperator{\perim}{perim}
\DeclareMathOperator{\area}{area}
\DeclareMathOperator{\cirr}{cr}
\DeclareMathOperator{\ir}{ir}
\DeclareMathOperator{\bd}{bd}
\theoremstyle{plain}
\newtheorem{theorem}{Theorem}[section]
\newtheorem{lemma}[theorem]{Lemma}
\newtheorem{conjecture}{Conjecture}
\newtheorem{corollary}[theorem]{Corollary}
\newtheorem{question}[theorem]{Question}
\newtheorem{remark}[theorem]{Remark}
\theoremstyle{definition}
\newtheorem{definition}[theorem]{Definition}
\numberwithin{equation}{section}
\newtheorem{example}[theorem]{Example}
\begin{document}

\title[The Honeycomb Conjecture]{The Honeycomb Conjecture in normed planes and an alpha-convex variant of a theorem of Dowker}

\author[Z. L\'angi]{Zsolt L\'angi}
\author[S. Wang]{Shanshan Wang}

\address{Zsolt L\'angi, Bolyai Institute, University of Szeged,\\
Aradi v\'ertan\'uk tere 1, H-6720, Szeged, Hungary and\\
HUN-REN Alfr\'ed R\'enyi Institute of Mathematics,\\
Re\'altanoda utca 13-15, H-1053, Budapest, Hungary}
\email{zlangi@server.math.u-szeged.hu}
\address{Shanshan Wang, Bolyai Institute, University of Szeged,\\
Aradi v\'ertan\'uk tere 1, H-6720, Szeged, Hungary}
\email{shanshanwang@server.math.u-szeged.hu}

\thanks{Partially supported by the ERC Advanced Grant ``ERMiD'', the National Research, Development and Innovation Office, NKFI, K-147544 grant, and project TKP2021-NVA-09 provided by the Ministry of Innovation and Technology of Hungary.}

\subjclass[2020]{52C20, 52A40, 52A38}
\keywords{Honeycomb Conjecture, normed plane, isoperimetrix, L'Huilier's theorem, normal tiling, convex tiling, Steinhaus's problem}

\begin{abstract}
The Honeycomb Conjecture states that among tilings with unit area cells in the Euclidean plane, the average perimeter of a cell is minimal for a regular hexagonal tiling. This conjecture was proved by L. Fejes T\'oth for convex tilings, and by Hales for not necessarily convex tilings. In this paper we investigate the same question for tilings of a given normed plane, and show that among normal, convex tilings in a normed plane, the average squared perimeter of a cell is minimal for a tiling whose cells are translates of a centrally symmetric hexagon. We also show that the question whether the same statement is true for the average perimeter of a cell is closely related to an $\alpha$-convex variant of a theorem of Dowker on the area of polygons circumscribed about a convex disk. Exploring this connection we find families of norms in which the average perimeter of a cell of a tiling is minimal for a hexagonal tiling, and prove some additional related results. Finally, we apply our method to give a partial answer to a problem of Steinhaus about the isoperimetric ratios of cells of certain tilings in the Euclidean plane, appeared in an open problem book of Croft, Falconer and Guy.
\end{abstract}

\maketitle

\section{Introduction}\label{sec:intro}

The aim of this paper is to investigate certain properties of mosaics. A \emph{mosaic} or \emph{tiling} of the Euclidean $d$-space $\Re^d$ is a countable family of compact sets $\T$, called cells, with the property that $\bigcup \T = \Re^d$, and the interiors of any two distinct cells are disjoint. Here, for convenience, it is usually assumed that every cell has nonempty interior. A tiling is \emph{convex}, if every cell in it is convex; it is known that in this case every cell is a convex polytope \cite[Theorem 1]{Schulte}. If, for a convex tiling $\T$, there are universal constants $0 < \hat{r} < \hat{R}$ such that every cell of $\T$ contains a ball of radius $\hat{r}$ and is contained in a ball of radius $\hat{R}$, the tiling is called \emph{normal}. In this paper we deal only with normal, convex tilings.

One of the classic problems of geometry, regarding planar mosaics, is the Honeycomb Conjecture, which states, roughly speaking, that in a decomposition of the Euclidean plane into cells of equal area, the average perimeter of the cells is minimal for the regular hexagonal grid; i.e. a tiling in which every cell is a unit area regular hexagon (for an investigation of mathematically rigorous variants of this problem, see \cite{Morgan}). This conjecture first appeared in Roman time in a book of Varro about agriculture \cite{Varro}. Despite its ancient origin, it took a surprisingly long time to find a satisfactory proof for this conjecture: it was solved by L. Fejes T\'oth for normal, convex mosaics \cite{LFT} in the 1940s , while the most general version is due to Hales \cite{Hales1} in 2001, who dropped the condition of convexity; the proofs of both statements require heavy computation based on the metric properties of the Euclidean plane. Despite its solution, this problem still seems to be interesting for mathematicians \cite{BFVV, BF19}, engineers \cite{Zhang} and even for philosophers \cite{Raz}.

Clearly, it is a natural question to ask if a similar statement holds for tilings in any normed plane. The aim of this paper is to investigate this problem. To do it, we recall a few well-known facts about normed planes. In particular, we recall that every origin-symmetric convex disk $M$ (i.e. every compact, convex set $M$ with nonempty interior and satisfying $M=-M$) is the unit disk of a normed plane, and the unit disk of a normed plane is an origin-symmetric convex disk. In the paper, for a normed plane $\M$, we denote the unit disk of $\M$ by $M$, and the norm of a point $p \in \M$ by $||p||_M$. For convenience, we imagine that $\M$ also has an underlying Euclidean structure, and denote the Euclidean norm of $p$ by $||p||$.

We also note that any finite dimensional real normed space can be equipped with a Haar measure, and that this measure is unique up to multiplication of the standard Lebesgue measure by a scalar (cf. e.g. \cite{Thompson}). This scalar does not play a role in our investigation and in the paper $\area(\cdot)$ denotes $2$-dimensional Lebesgue measure. Finally, as usual, we define the perimeter of any convex disk $K$ in $\M$, also called the \emph{$M$-perimeter} of $K$ and denoted by $\perim_M(K)$, as the supremum of the sums of the $M$-lengths of the sides of any convex polygon inscribed in $K$ (see e.g. \cite{Schaffer}).

We ask the following question, where by a hexagonal tiling we mean a tiling whose cells are translates of a given centrally symmetric hexagon.

\begin{question}
Is it true that in any normal, convex tiling in any normed plane $\M$, with unit area cells, the average perimeter of a cell, if it exists, is minimal for a hexagonal tiling?
\end{question}

Before stating our first result, we note that there are optimization problems in discrete geometry whose solution for tilings in the Euclidean plane is a regular hexagonal tiling, while in any normed plane it is a (not necessarily regular) hexagonal tiling. As examples, we mention the \emph{densest circle packing problem} (see \cite{LFT2, FTL, Rogers, Rogers1} for its solution in the Euclidean plane and in any normed plane) as well as the \emph{simultaneous packing and covering problem} \cite{Zong}, which can be regarded as isoperimetric problems involving the area and inradius of cells, and the circumradius and inradius of cells of a tiling, respectively.

In the paper we need the following definition, where by $\BB^2$ we denote the Euclidean closed unit disk centered at the origin $o$, and $\card(\cdot)$ denotes the cardinality of a set.

\begin{definition}\label{defn:averageperim}
Let $\T$ be a convex, normal tiling in the normed plane $\M$. For any $R > 0$, let $\T(R)$ denote the family of cells of $\T$ contained in $R  \BB^2$. Let $\alpha > 0$. 
We define the \emph{lower average $\alpha$th powered perimeter} of a cell of $\T$ as the quantity
\begin{equation}\label{eq:averageperim}
\underline{P}_{\alpha}(\T) = \liminf_{R \to \infty} \frac{\sum_{C\in\T(R)} \left( \perim_M(C) \right)^{\alpha} }{\card(\T(R))}.
\end{equation}
We define the \emph{upper average $\alpha$th powered perimeter} of a cell of $\T$, denoted by $\overline{P}_{\alpha}(\T)$, in the same way, replacing $\liminf$ by $\limsup$.
If $\underline{P}_{\alpha}(\T) = \overline{P}_{\alpha}(\T)$, we call this quantity the \emph{average $\alpha$th powered perimeter} of a cell of $\T$, and denote it by $P_{\alpha}(\T)$.
We define the quantities $\underline{P}_{\log}(\T), \overline{P}_{\log}(\T)$ and $P_{\log}(\T)$ similarly, replacing $\left( \perim_M(C) \right)^{\alpha}$ by $\log \left( \perim_M(C) \right)$ in the above definitions.
\end{definition}

We note that, as usual in convex geometry, we may denote the quantities $\underline{P}_{\log}(\T)$, $\overline{P}_{\log}(\T)$ and $P_{\log}(\T)$ also by $\underline{P}_{0}(\T), \overline{P}_{0}(\T)$ and $P_{0}(\T)$, respectively. We also remark that whereas the above procedure is the standard way to define the `average' of a certain geometric quantity of infinitely many objects in the plane, the Euclidean disk $\BB^2$ in the definition of $\T(R)$ is often replaced in the literature by other figures, for instance squares or hexagons (see e.g. \cite{FTL, Lagerungen}).

\begin{remark}\label{rem:monotonicity}
For any $\alpha, \beta \in (0,\infty)$ with $\alpha < \beta$ and any normal, convex tiling $\T$ in $\M$, we have $\exp(\underline{P}_{\log}(\T)) \leq \left( \underline{P}_{\alpha}(\T) \right)^{1/\alpha} \leq \left( \underline{P}_{\beta}(\T) \right)^{1/\beta}$ and $\exp(\overline{P}_{\log}(\T)) \leq \left( \overline{P}_{\alpha}(\T) \right)^{1/\alpha} \leq \left( \overline{P}_{\beta}(\T) \right)^{1/\beta}$. Furthermore, if $\T$ is a hexagonal tiling, we have equality in all the previous inequalities.
\end{remark}

Our first result is the following theorem, which, as it is shown by Remark~\ref{rem:monotonicity}, can be regarded as a `weaker' variant of the Honeycomb Conjecture.

\begin{theorem}\label{thm:suboptimal}
For any normed plane $\M$ there is a hexagonal tiling $\T_{hex}$ of $\M$ with unit area tiles such that for any convex, normal tiling $\T$ of $\M$ with unit area tiles, we have
\[
\underline{P}_2(\T) \geq P_2(\T_{hex}).
\]
\end{theorem}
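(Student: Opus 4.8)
\emph{Overall approach.} The plan is to pass from $\M$ to its \emph{isoperimetrix}, reduce the inequality to a polygonal (L'huilier-type) isoperimetric estimate, and then feed this into Euler's formula together with Dowker's convexity theorem for minimal circumscribed polygons. Recall that the isoperimetrix of $\M$ is the centrally symmetric convex disk $I_M$ (a rotated copy of the polar body $M^\circ$) characterised by $\perim_M(K)=2V(K,I_M)$ for every convex disk $K$, where $V(\cdot,\cdot)$ is the mixed area normalised so that $V(K,K)=\area(K)$; equivalently, for a convex polygon $P$ with outer unit normals $u_1,\dots,u_n$ and Euclidean edge lengths $\ell_1,\dots,\ell_n$ one has $\perim_M(P)=\sum_i\ell_i\,\|u_i^{\perp}\|_M=\sum_i\ell_i\,h_{I_M}(u_i)=2V(P,I_M)$, where $h_K(u)=\max_{x\in K}\langle x,u\rangle$ is the support function and $u_i^\perp$ is $u_i$ rotated by $\pi/2$. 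For $n\ge 3$ write $a_n=a_n(I_M)$ for the least area of a convex polygon with at most $n$ sides that contains $I_M$. Since $I_M$ is centrally symmetric, $(a_n)$ is convex by Dowker's theorem, and it is plainly non-increasing.

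\emph{The core estimate.} I would first prove the normed polygonal isoperimetric inequality: for every convex $n$-gon $P$,
\[
\bigl(\perim_M(P)\bigr)^2 \;\ge\; 4\,a_n\,\area(P),
\]
with equality exactly when $P$ is homothetic to a minimal-area $n$-gon circumscribed about $I_M$. The proof compares $P$ with the polygon $P^{*}=\bigcap_i\{x:\langle x,u_i\rangle\le h_{I_M}(u_i)\}$ built from the outer normals of $P$. Then $I_M\subseteq P^{*}$ and $P^{*}$ has at most $n$ sides, so $\area(P^{*})\ge a_n$; moreover $h_{P^{*}}(u_i)\le h_{I_M}(u_i)$ for every $i$, hence $2V(P,P^{*})=\sum_i\ell_i h_{P^{*}}(u_i)\le\sum_i\ell_i h_{I_M}(u_i)=\perim_M(P)$. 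Combining this with Minkowski's inequality $V(P,P^{*})^2\ge\area(P)\area(P^{*})$ gives $\bigl(\perim_M(P)\bigr)^2\ge 4V(P,P^{*})^2\ge 4\area(P)\area(P^{*})\ge 4\,a_n\,\area(P)$, and tracing the equality conditions (all faces of $P^{*}$ active; $P$ homothetic to $P^{*}$; $P^{*}$ of minimal area) yields the stated rigidity.

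\emph{The global argument.} Normalising (as throughout the paper) so that all cells have unit area, the core estimate gives $\bigl(\perim_M(C)\bigr)^2\ge 4\,a_{n_C}$ for each cell $C$, where $n_C$ is its number of sides. Since $\T$ is normal, $\card(\T(R))=\Theta(R^2)$, each cell has at most a fixed number of sides (an area/packing bound from the inradius and circumradius constants), and only $O(R)$ cells meet $\partial(R\BB^2)$; Euler's formula applied to the planar graph of the cells in and near $\T(R)$, using that every vertex has degree $\ge 3$, yields $\sum_{C\in\T(R)}n_C\le 6\,\card(\T(R))+O(R)$, so the average number of sides is $6+o(1)$. Writing $N=\card(\T(R))$ and $\bar n(R)=\tfrac1N\sum_{C}n_C$, and extending $(a_n)$ to $[3,\infty)$ by linear interpolation (which preserves convexity and monotonicity), Jensen's inequality and then monotonicity give
\[
\frac{1}{N}\sum_{C\in\T(R)}\bigl(\perim_M(C)\bigr)^2 \;\ge\; \frac{4}{N}\sum_{C\in\T(R)}a_{n_C} \;\ge\; 4\,a_{\bar n(R)} \;\ge\; 4\,a_{6+\varepsilon}
\]
for every fixed $\varepsilon>0$ and all large $R$; letting $R\to\infty$ and then $\varepsilon\to0$ yields $\underline{P}_2(\T)\ge 4a_6$. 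Finally, because $I_M$ is centrally symmetric one may choose a minimal-area circumscribed hexagon $H^{*}$ of $I_M$ that is itself centrally symmetric (this I would take from, or reprove by a short symmetrisation/compactness argument within, the Dowker-type theory). As every centrally symmetric hexagon tiles $\M$ by translations, let $\T_{hex}$ be the tiling by translates of the unit-area rescaling of $H^{*}$: all its cells are congruent and of unit area, and since $\perim_M^2/\area$ is scale-invariant and $H^{*}$ is circumscribed about $I_M$ (so $\perim_M(H^{*})=2\area(H^{*})$), we get $P_2(\T_{hex})=\bigl(\perim_M(H^{*})\bigr)^2/\area(H^{*})=4\area(H^{*})=4a_6\le\underline{P}_2(\T)$. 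The equality case of the core estimate shows this bound is attained by $\T_{hex}$, so it is sharp.

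\emph{Main obstacle.} The delicate points are the two places where more than soft arguments are needed: the mixed-area comparison $\bigl(\perim_M(P)\bigr)^2\ge 4a_n\area(P)$ — here the exponent $2$ is essential, since the quantity is genuinely quadratic and only Dowker's convexity of $(a_n)$ is required, with no convexity of $\sqrt{a_n}$ (which is exactly what an average-\emph{perimeter} statement would demand and what the paper's $\alpha$-convex variant addresses) — and the extraction of a \emph{centrally symmetric} minimal circumscribed hexagon, without which $\T_{hex}$ could fail to be a translational hexagonal tiling.
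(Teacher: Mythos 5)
Your proposal is correct and follows essentially the same route as the paper: your ``core estimate'' is exactly Chakerian's normed L'huilier inequality (which the paper cites rather than re-derives via Minkowski's mixed-area inequality), combined with Dowker's convexity theorem, the Euler-formula bound showing the average number of sides is at most $6+o(1)$, and Dowker's theorem giving a centrally symmetric minimal-area circumscribed hexagon whose translates realize the bound. The only differences (deriving Chakerian from $V(P,P^*)^2\ge\area(P)\area(P^*)$, and using Jensen on the linearly interpolated sequence instead of a supporting-line argument) are cosmetic.
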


We will see that the proof of Theorem~\ref{thm:suboptimal} relies on the following classical result of Dowker \cite{CHD} on the area of convex polygons circumscribed about a convex disk.

\begin{theorem}[Dowker]\label{thm:Dowker1}
For any convex disk $K$ in $\Re^2$, let
\[
A_K(n)=\inf\{ \area(P):  P \hbox{ is a convex } n\hbox{-gon circumscribed about } K \}.
\]
Then the sequence $ \{ A_K(n)\}$ is convex. In other words, for any $n \geq 4$, we have
\[
A_K(n-1)+A_K(n+1)\geq 2 A_K(n).
\]
\end{theorem}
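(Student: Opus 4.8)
The plan is to prove Dowker's convexity theorem for the function $n \mapsto A_K(n)$ by the classical ``mixing'' argument that also yields the companion results for inscribed polygons and perimeters. First I would fix $n \geq 4$ and, after a routine compactness/approximation remark (the infimum defining $A_K(n+1)$ is attained for compact $K$ with nonempty interior, and the case of lower-dimensional $K$ is trivial or handled by a limiting argument), take a convex $(n+1)$-gon $Q$ circumscribed about $K$ with $\area(Q) = A_K(n+1)$. Label its sides (equivalently, the edges of $Q$, each of which touches $K$) cyclically as $\ell_1, \ell_2, \dots, \ell_{n+1}$. The idea is to manufacture from $Q$ two circumscribed polygons, one with $n$ sides and one with $n+2$ sides, whose areas average to at most $\area(Q)$; iterating/relabeling then gives the inequality $A_K(n-1) + A_K(n+1) \geq 2A_K(n)$ after an index shift.

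The key construction is as follows. For each $i$, let $Q_i$ be the convex $n$-gon circumscribed about $K$ obtained from $Q$ by \emph{deleting} the line containing side $\ell_i$: concretely, $Q_i$ is the intersection of the $n$ closed half-planes bounded by the lines of the remaining sides $\ell_j$, $j \neq i$. Since $K$ still lies in all those half-planes, $Q_i \supseteq Q$ is a circumscribed $n$-gon (it may degenerate into having fewer than $n$ sides if two neighbors of $\ell_i$ become parallel or their lines already met on $\partial Q$, but in that case $Q_i$ still has at most $n$ sides and we may add spurious supporting lines far from $K$, or simply note $A_K(n) \leq A_K(n') $ for $n' \le n$ is false in general — so instead one keeps track only of the chord that is cut off). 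Writing $T_i = \area(Q_i) - \area(Q) \geq 0$ for the ``triangular'' region sliced off near side $\ell_i$ when its line is removed, the crucial combinatorial observation is that the regions $T_i$ and $T_{i+1}$ corresponding to two \emph{adjacent} sides are disjoint (they lie on opposite ``ends'' of side $\ell_{i+1}$... ), more precisely that one can choose, for a well-chosen index $i$, to simultaneously remove $\ell_i$ (getting an $n$-gon $Q_i$) while the leftover part of the plane lets us \emph{also} realize an $(n+2)$-gon by splitting an appropriate vertex — this is the part that needs care.

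The cleanest route, which I would actually follow, avoids the $(n+2)$-gon entirely and is the standard proof: among the $n+1$ circumscribed $n$-gons $Q_1, \dots, Q_{n+1}$ obtained by deleting one side at a time, one shows that two of them, say $Q_i$ and $Q_j$ with $\{i,j\}$ chosen appropriately (adjacent indices, say $j = i+1$), satisfy
\[
\area(Q_i) + \area(Q_j) \leq \area(Q) + A_K(n-1),
\]
because the convex $(n-1)$-gon obtained from $Q$ by deleting \emph{both} lines $\ell_i$ and $\ell_{i+1}$ has area $\area(Q) + T_i + T_{i+1} + (\text{cross term})$, and when $\ell_i, \ell_{i+1}$ are adjacent the slicing regions interact so that $\area(Q_i) + \area(Q_{i+1}) - \area(Q) \le \area(Q_{i,i+1}) $ where $Q_{i,i+1}$ is that $(n-1)$-gon. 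Since $\area(Q_{i,i+1}) \geq A_K(n-1)$ is the wrong direction, one instead argues: $\min_i \area(Q_i) \leq A_K(n)$ is immediate (each $Q_i$ is a circumscribed $n$-gon, so $A_K(n) \leq \area(Q_i)$ — again wrong direction!), so the honest statement is $A_K(n) \le \min_i \area(Q_i)$, and separately the $(n-1)$-gon gives $A_K(n-1) \le \area(Q_{i,i+1})$; combining with the \emph{area identity} $\area(Q_{i,i+1}) + \area(Q) = \area(Q_i) + \area(Q_{i+1})$ valid precisely when sides $\ell_i,\ell_{i+1}$ are adjacent (the two slices are then ``independent'' and their union in $Q_{i,i+1}$ equals the sum), we get
\[
A_K(n-1) + \area(Q) \leq \area(Q_i) + \area(Q_{i+1}),
\]
and we must still pass from the right side to $2A_K(n)$; this does \emph{not} follow from $A_K(n) \le \area(Q_i)$.

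I anticipate that \textbf{this last step is the genuine obstacle}, and the resolution is to choose the index $i$ so that \emph{both} $Q_i$ and $Q_{i+1}$ have area equal to $A_K(n)$, which cannot be forced directly. The correct classical argument (Dowker, and see also L. Fejes Tóth's \emph{Lagerungen}) instead runs the construction in the \emph{opposite} direction: start from an \emph{optimal} $n$-gon $P$ with $\area(P) = A_K(n)$, and for each side of $P$ cut the corner off near that side by a supporting line of $K$ to produce an $(n+1)$-gon, and similarly slide one side to produce an $(n-1)$-gon; averaging the areas of the two families of polygons so obtained and using that each is $\geq A_K(n+1)$ resp.\ $\geq A_K(n-1)$, while the \emph{average} of the $(n+1)$-gon areas minus $\area(P)$ equals the \emph{average} of $\area(P)$ minus the $(n-1)$-gon areas by an exact area-balancing identity, yields $A_K(n+1) - A_K(n) \geq$ (average increase) $=$ (average decrease) $\geq A_K(n) - A_K(n-1)$, which is the claimed convexity. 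So the plan concretely is: (i) reduce to $K$ a convex body and optimal $n$-gon attained; (ii) set up the two families of $n\pm1$-gons obtained by a single local move at each of the $n$ sides of the optimal $P$; (iii) prove the exact averaging identity relating total area change up and down — this is the combinatorial heart and where the adjacency/interaction bookkeeping lives; (iv) apply $\area \geq A_K(n\pm1)$ termwise and rearrange. I expect step (iii), the area-balancing identity, to require the most care, essentially because one must verify that the ``corner triangles'' added and ``corner triangles'' removed can be matched up consistently around the polygon.
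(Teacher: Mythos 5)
Your final plan does not prove the inequality; in fact its inequality directions are reversed, and the architecture cannot be repaired. Starting from an \emph{optimal} circumscribed $n$-gon $P$ with $\area(P)=A_K(n)$ and performing local moves at its sides only produces \emph{some} circumscribed $(n+1)$-gons $P_i^+$ and $(n-1)$-gons $P_i^-$, for which the optimality of $P$ gives you $A_K(n+1)\leq\area(P_i^+)$ and $A_K(n-1)\leq\area(P_i^-)$, i.e.\ \emph{upper} bounds on $A_K(n\pm1)$ in terms of the constructed areas are exactly what you do not get. Consequently $\area(P_i^+)-\area(P)\geq A_K(n+1)-A_K(n)$ and $\area(P)-\area(P_i^-)\leq A_K(n)-A_K(n-1)$, which are the opposites of the two termwise inequalities you invoke; even if your ``exact area-balancing identity'' (which you do not justify, and which is not an identity in general) were granted, chaining the correct directions would yield $A_K(n+1)-A_K(n)\leq A_K(n)-A_K(n-1)$, i.e.\ \emph{concavity}, which is false already for $K=\BB^2$ where $A_{\BB^2}(n)=n\tan\frac{\pi}{n}$ is strictly convex. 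The same directional defect is why your first construction (deleting sides of an optimal $(n+1)$-gon) stalled, as you yourself noticed; moving the single optimal polygon to the other level of $n$ does not cure it.

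The classical proof — and the one this paper reproduces when it adapts Dowker's argument in the proof of Theorem~\ref{thm:betterthanone} — runs in the direction you abandoned: take optimal circumscribed polygons $Q_{n-1}$ and $Q_{n+1}$, superimpose their systems of supporting lines (equivalently, their tangent points on $\bd(K)$, viewed as a $2$-tiling of $\bd(K)$ by arcs), and repeatedly ``uncross'' covering pairs using the swap inequality (\ref{eq:Dowkerold}); this splits the union into two interlaced families of $n$ supporting lines each, giving two circumscribed $n$-gons $V_1,V_2$ with $\area(V_1)+\area(V_2)\leq\area(Q_{n-1})+\area(Q_{n+1})$ as in (\ref{eq:areas}). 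Here the directions chain correctly, because the only bound needed on the \emph{constructed} polygons is $A_K(n)\leq\area(V_i)$, whence $2A_K(n)\leq A_K(n-1)+A_K(n+1)$. So the genuine missing idea in your proposal is this superposition-and-uncrossing of the two optimal $(n\pm1)$-gons; without it, no averaging over modifications of the optimal $n$-gon can produce the required lower bound on $A_K(n-1)+A_K(n+1)$.
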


Based on this result, we define two properties.

\begin{definition}\label{defn:alphahoneycomb}
Let $\alpha \in (0,\infty)$. We say that the normed plane $\M$ satisfies the \emph{$\alpha$-honeycomb property}, if there is a hexagonal tiling $\T_{hex}$ of $\M$ such that for any convex, normal tiling $\T$ of $\M$, we have
\[
\underline{P}_{\alpha}(\T) \geq P_{\alpha}(\T_{hex}).
\]
Similarly, we say that it satisfies the \emph{log-honeycomb} (or \emph{$0$-honeycomb}) \emph{property} if the same holds for the lower average log-perimeter of a cell of $\T$.
\end{definition}

Observe that by Theorem~\ref{thm:suboptimal}, any norm with unit disk $M$ satisfies the $2$-honeycomb property.

\begin{definition}\label{defn:alphadowker}
Let $\alpha \in (0,\infty)$. We say that a convex disk $K$ satisfies the \emph{$\alpha$-Dowker property} if the sequence $\{ A_K^{\alpha}(n) \}$ is convex. Furthermore, we say that it satisfies the \emph{log-Dowker} (or \emph{$0$-Dowker}) \emph{property} if the sequence $\{ \log A_K(n) \}$ is convex.
\end{definition}

In our proof we only need the following, weaker variant of the $\alpha$-Dowker property.

\begin{definition}\label{defn:weakalphadowker}
Let $\alpha \in (0,\infty)$. We say that a convex disk $K$ satisfies the \emph{weak $\alpha$-Dowker property} if
\begin{equation}\label{eq:weakalphadowker}
\frac{n-6}{n-m} A_K^{\alpha}(m) + \frac{6-m}{n-m} A_K^{\alpha}(n) \geq A_K^{\alpha}(6)
\end{equation}
holds for any $3 \leq m < 6 < n$. Similarly, we say that $K$ satisfies the \emph{weak log-Dowker} (or \emph{weak $0$-Dowker}) \emph{property} if
\begin{equation}\label{eq:weakalphadowker2}
\frac{n-6}{n-m} \log A_K(m) + \frac{6-m}{n-m} \log A_K(n) \geq \log A_K(6)
\end{equation}
holds for any $3 \leq m < 6 < n$.
\end{definition}

For brevity, if $\alpha=1$ in any of the above definitions, we may omit it from the notation.
In our next result, we recall the notion of \emph{isoperimetrix} of a normed plane $\M$ with unit disk $M$, defined as the rotated copy of the Euclidean polar of $M$, about the origin $o$, by $\frac{\pi}{2}$. We denote the isoperimetrix of $\M$ by $M_{iso}$.

\begin{theorem}\label{thm:generalized}
Let $M$ be an $o$-symmetric convex disk. For any $\alpha \in [ 0,\infty)$, if the isoperimetrix $M_{iso}$ of $\M$ satisfies the weak $\alpha$-Dowker property, then the normed plane $\M$ satisfies the $(2\alpha)$-honeycomb property.
\end{theorem}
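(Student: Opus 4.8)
The plan is to connect the $M$-perimeter of a convex cell $C$ to the area of a polygon circumscribed about the isoperimetrix $M_{iso}$, and then feed this into an averaging argument governed by the fact that in any normal convex tiling the cells have, on average, at most six sides. First I would recall the basic duality between $M$-perimeter and the isoperimetrix: for a convex polygon $C$ with $n$ sides, the $M$-perimeter of $C$ equals (a suitable scalar multiple of) twice the area of the polygon $C^*$ circumscribed about $M_{iso}$ whose sides have the same outer normal directions as the sides of $C$. Concretely, each side of $C$ of Euclidean length $\ell$ with outer unit normal $u$ contributes $\ell \cdot h_{M_{iso}}(u)/|u|$-type terms, and summing the associated triangles from $o$ shows $\perim_M(C) = 2\,\area(C^*)$ up to normalization. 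Consequently, for a fixed cell $C$ with $n(C)$ sides, $\perim_M(C) \geq 2 A_{M_{iso}}(n(C))$, since $C^*$ is \emph{some} circumscribed $n(C)$-gon and $A_{M_{iso}}(n)$ is the infimum of areas over all such polygons. Raising to the power $2\alpha$ gives $\bigl(\perim_M(C)\bigr)^{2\alpha} \geq 2^{2\alpha} A_{M_{iso}}^{\alpha}(n(C))$.

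Next I would handle the averaging. For a normal convex tiling $\T$, a standard consequence of Euler's formula (applied to large finite patches $\T(R)$ and letting $R \to \infty$, with the normality condition controlling boundary effects) is that $\liminf_{R\to\infty} \frac{1}{\card \T(R)} \sum_{C \in \T(R)} n(C) \leq 6$. Writing $p_k$ for the (limiting, along a suitable subsequence realizing the $\liminf$ in $\underline P_{2\alpha}(\T)$) fraction of cells with exactly $k$ sides, we get $\sum_k p_k \geq$-type normalization $\sum_k p_k = 1$ and $\sum_k k\, p_k \leq 6$. Then
\[
\underline P_{2\alpha}(\T) \;\geq\; 2^{2\alpha} \sum_{k \geq 3} p_k\, A_{M_{iso}}^{\alpha}(k).
\]
Now the weak $\alpha$-Dowker property is exactly the convexity-type inequality needed to conclude that, subject to $\sum p_k = 1$ and $\sum k p_k \leq 6$, the right-hand side is minimized by concentrating all mass at $k=6$: indeed, \eqref{eq:weakalphadowker} says that any $p_m$ and $p_n$ with $m<6<n$ can be replaced (preserving the mean and total mass) by mass at $6$ without increasing $\sum p_k A_{M_{iso}}^{\alpha}(k)$, while decreasing the mean only helps since $A_{M_{iso}}^{\alpha}(k)$ is nonincreasing in $k$. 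This yields $\underline P_{2\alpha}(\T) \geq 2^{2\alpha} A_{M_{iso}}^{\alpha}(6)$. Finally, taking $\T_{hex}$ to be a hexagonal tiling by translates of the hexagon dual (in the above sense) to an optimal circumscribed hexagon of $M_{iso}$, every cell has $\perim_M = 2 A_{M_{iso}}(6)$, so $P_{2\alpha}(\T_{hex}) = 2^{2\alpha} A_{M_{iso}}^{\alpha}(6)$, and the $(2\alpha)$-honeycomb property follows. The log-case ($\alpha = 0$) is identical with $A_{M_{iso}}^{\alpha}(k)$ replaced by $\log A_{M_{iso}}(k)$ and \eqref{eq:weakalphadowker2} in place of \eqref{eq:weakalphadowker}.

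I expect the main obstacle to be the averaging step over an infinite tiling: one must justify rigorously that the $\liminf$ defining $\underline P_{2\alpha}(\T)$ can be analyzed via limiting side-distribution frequencies $p_k$, that these satisfy $\sum k p_k \leq 6$ despite only controlling cells fully inside $R\BB^2$ (here normality, which bounds the number of boundary cells by $O(R)$ against the $\Theta(R^2)$ interior cells, is essential), and that the discrete optimization "minimize $\sum p_k c_k$ over distributions with mean $\leq 6$" legitimately reduces via the weak $\alpha$-Dowker inequality to the point mass at $6$; some care is needed because a priori the sum $\sum k p_k$ could be infinite or the tiling could have cells with unboundedly many sides, but normality again bounds the number of sides of each cell uniformly, keeping everything finite. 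The perimeter–isoperimetrix duality, while classical, also needs to be stated with the correct normalization so that the optimal hexagonal tiling exactly attains the bound; this is where the precise definition of $M_{iso}$ as the $\frac{\pi}{2}$-rotated polar of $M$ enters.
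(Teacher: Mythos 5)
There is a genuine error at the heart of your argument: the claimed identity $\perim_M(C)=2\,\area(C^*)$ is false. What the triangle decomposition actually gives is $\perim_M(C)=\sum_i \ell_i\, h_{M_{iso}}(u_i)=2A(C,M_{iso})$, the \emph{mixed} area of $C$ with $M_{iso}$, whereas $\area(C^*)=A(C^*,M_{iso})$; these agree only when $C$ is homothetic to $C^*$. Indeed your identity cannot hold by homogeneity: replacing $C$ by $\lambda C$ scales $\perim_M(C)$ by $\lambda$ but leaves $C^*$ (hence $\area(C^*)$) unchanged. Even with the unit-area normalization it fails: in the Euclidean plane take $C$ a unit square, so $\perim(C)=4$ while $2\area(C^*)=8$, and your pointwise bound $\perim_M(C)\geq 2A_{M_{iso}}(n(C))$ is violated. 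The correct ingredient, and the one the paper uses, is Chakerian's normed-plane version of L'huilier's inequality (Theorem~\ref{thm:Chakerian}): $\perim_M(C)^2\geq 4\,\area(C^*)\area(C)$, with equality iff $C$ is homothetic to $C^*$. With $\area(C)=1$ this gives $\perim_M(C)^{2\alpha}\geq 4^{\alpha}A_{M_{iso}}^{\alpha}(n(C))$, which is exactly why the hypothesis on $M_{iso}$ (weak $\alpha$-Dowker) yields only the $(2\alpha)$-honeycomb property: the square in L'huilier's inequality is essential, and your linear ``duality'' would, if true, prove the much stronger $\alpha$-honeycomb statement that the paper explicitly leaves open. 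Note also an internal inconsistency: from your own identity, raising to the power $2\alpha$ would produce $A_{M_{iso}}^{2\alpha}(n(C))$, not $A_{M_{iso}}^{\alpha}(n(C))$; that your final constant matches the true bound $4^{\alpha}A_{M_{iso}}^{\alpha}(6)$ is an artifact of this slip.

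The remainder of your outline is essentially the paper's route and is sound once the first step is repaired: Lemma~\ref{lem:Euler} (Euler's formula with normality controlling the $\mathcal{O}(R)$ boundary cells against $\Theta(R^2)$ interior cells) gives $\overline{v}(\T)\leq 6$, the uniform bound on the number of sides of a cell comes from normality, and the weak $\alpha$-Dowker inequality \eqref{eq:weakalphadowker} is precisely what lets one transport side-number mass across $6$ without increasing the average of $A_{M_{iso}}^{\alpha}(\cdot)$, the monotonicity of $A_{M_{iso}}$ handling mass below $6$. Two further points need care in your attainment step. First, the extremal tiling must consist of translates of a unit-area homothet of a \emph{centrally symmetric} minimum-area hexagon circumscribed about $M_{iso}$; the existence of an $o$-symmetric optimal hexagon is Dowker's second theorem (Theorem~\ref{thm:Dowker2}), and central symmetry is what guarantees that translates tile the plane. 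Second, equality for this tiling comes from the equality case of Chakerian's inequality (each cell is homothetic to its circumscribed hexagon), not from any scale-free identity; under your claimed identity every homothet would have the same $M$-perimeter, which again signals that the normalization was lost.
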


It is an elementary exercise to check that $A_{\BB^2}(n)=n \tan \frac{\pi}{n}$, implying that $\BB^2$ satisfies the log-Dowker property. Thus, Theorem~\ref{thm:generalized} immediately yields Corollary~\ref{cor:Euclidean}.

\begin{corollary}\label{cor:Euclidean}
The Euclidean plane satisfies the log-honeycomb property.
\end{corollary}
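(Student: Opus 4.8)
The plan is to obtain Corollary~\ref{cor:Euclidean} as a direct application of Theorem~\ref{thm:generalized} in the special case $\alpha=0$, so that essentially the only work is to verify the hypothesis: that the isoperimetrix of the Euclidean plane satisfies the weak log-Dowker property. First I would identify the isoperimetrix of the Euclidean plane. Since the Euclidean unit disk $M=\BB^2$ is its own Euclidean polar, and rotation by $\frac{\pi}{2}$ maps the disk to itself, we have $M_{iso}=\BB^2$. Thus it suffices to check that $\BB^2$ satisfies the weak log-Dowker property, and in fact it is cleaner to verify the full log-Dowker property, namely that the sequence $\{\log A_{\BB^2}(n)\}_{n\geq 3}$ is convex, since convexity of this sequence immediately implies inequality \eqref{eq:weakalphadowker2} for all $3\le m<6<n$ by the standard fact that a convex sequence lies below every chord — here the point $6$ lies in the interval $[m,n]$ and the left-hand side of \eqref{eq:weakalphadowker2} is exactly the value at $6$ of the affine function interpolating $\log A_K$ at $m$ and $n$.

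Next I would compute $A_{\BB^2}(n)$ explicitly. A circumscribed convex $n$-gon of minimal area about a disk of radius $1$ is the regular $n$-gon, each of whose $n$ congruent triangles (apex at the center, base a side of the polygon) has area $\tan\frac{\pi}{n}$; hence $A_{\BB^2}(n)=n\tan\frac{\pi}{n}$. (The minimality of the regular polygon here is classical; one may also note that it follows a fortiori from Theorem~\ref{thm:Dowker1} together with a symmetrization argument, but the direct computation is simplest.) It then remains to show that the function
\[
f(x) = \log\!\left( x \tan\frac{\pi}{x} \right), \qquad x \ge 3,
\]
is convex, since convexity of $f$ on $[3,\infty)$ gives convexity of the restricted sequence $\{f(n)\}_{n\ge 3}$. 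Writing $f(x)=\log x + \log\tan\frac{\pi}{x}$, I would substitute $t=\frac{\pi}{x}\in(0,\frac{\pi}{3}]$ and reduce the claim to showing that $g(t)=\log\frac{\tan t}{t}$ is a convex function of $\frac1t$, equivalently that $h(u)=\log\big(u\tan\frac1u\big)$ is convex for $u\ge 3/\pi$; this is a one-variable calculus check: differentiate twice and show $h''(u)\ge 0$ on the relevant range, which amounts to an elementary (if slightly tedious) trigonometric inequality in $t=1/u\in(0,\pi/3]$, provable using the power series of $\tan$ or standard bounds such as $\tan t > t + t^3/3$.

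The only genuine obstacle is this last analytic verification that $x\mapsto \log(x\tan\frac{\pi}{x})$ is convex for $x\ge 3$; everything else is formal. I expect the cleanest route is to expand $\log\tan t - \log t = \sum_{k\ge 1} c_k t^{2k}$ with $c_1=\tfrac13>0$ and all $c_k>0$ (the Taylor coefficients of $\log\frac{\tan t}{t}$ are known to be positive on $(0,\pi/2)$), so that $\log\tan\frac{\pi}{x}-\log\frac{\pi}{x}=\sum_k c_k \pi^{2k} x^{-2k}$, and then $f(x)=\log\pi+\sum_k c_k\pi^{2k}x^{-2k}$; since each term $x^{-2k}$ is convex on $(0,\infty)$ and the $c_k$ are nonnegative, $f$ is convex as a sum of convex functions. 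The excerpt already asserts that this computation is ``an elementary exercise,'' so I would present it as such, record $A_{\BB^2}(n)=n\tan\frac{\pi}{n}$, note $M_{iso}=\BB^2$, invoke Theorem~\ref{thm:generalized} with $\alpha=0$, and conclude that the Euclidean plane satisfies the log-honeycomb property.
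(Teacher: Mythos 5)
Your proposal follows the paper's own route exactly: identify $M_{iso}=\BB^2$, compute $A_{\BB^2}(n)=n\tan\frac{\pi}{n}$, verify that $\{\log A_{\BB^2}(n)\}$ is convex (hence the weak log-Dowker property holds), and invoke Theorem~\ref{thm:generalized} with $\alpha=0$. The only difference is that you spell out the ``elementary exercise'' — your argument via the positive Taylor coefficients of $\log\frac{\tan t}{t}$, giving $f(x)=\log\pi+\sum_k c_k\pi^{2k}x^{-2k}$ as a sum of convex terms, is a correct and clean way to do it.
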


The structure of the paper is as follows.

In Section~\ref{sec:main}, we prove Theorems~\ref{thm:suboptimal} and \ref{thm:generalized}. In Sections~\ref{sec:polygonal} and \ref{sec:general} we investigate the $\alpha$-Dowker and $\alpha$-honeycomb properties of convex disks and normed planes, respectively. More specifically, in Section~\ref{sec:polygonal} we consider convex polygons and polygonal norms, while in Section~\ref{sec:general} we examine properties of convex disks and norms which are not necessarily polygonal. In Section~\ref{sec:Steinhaus} we consider a problem of Steinhaus, appeared in the problem book \cite{CFG} of Croft, Falconer and Guy, asking if it is true that among tilings of the Euclidean plane with tiles whose diameters are at least some universal constant $D > 0$, the maximum isoperimetric ratio of the cells is minimal for a regular hexagonal tiling. Using the approach of the proof of Theorem~\ref{thm:suboptimal}, we show that for convex, normal tilings, a stronger statement holds in any normed plane, and also investigate possible generalizations of our method for this problem. We finish the paper with an additional remark in Section~\ref{sec:remark}.

In our investigation, we denote by $\inter(X)$, $\bd(X)$ and $\conv(X)$ the \emph{interior, boundary} and the \emph{convex hull} of a set $X$, and for any two points $p,q$, we denote by $[p,q]$ the closed straight line segment connecting them. Finally, we denote the Euclidean unit circle $\bd \BB^2$ by $\Sph^1$.

\section{Proof of Theorems~\ref{thm:suboptimal} and \ref{thm:generalized}}\label{sec:main}

First, we observe that Theorem~\ref{thm:generalized}, combined with Theorem~\ref{thm:Dowker1} readily implies Theorem~\ref{thm:suboptimal}. Thus, we only need to prove Theorem~\ref{thm:generalized}. Throughout this section, we deal with a given convex, normal tiling $\T$ in a normed plane $\M$. We permit the angles of cells to be straight angles; due to this, we can regard $\T$ as an \emph{edge-to-edge} tiling, in which every edge of every cell belongs to exactly one more cell. During our investigation, $\T(R)$ denotes the family of cells of $\T$ contained in $R \BB^2$, and for every $C \in \T$, $v(C)$ denotes the number of sides of $C$.
We start with collecting some elementary observations and known facts about tilings and normed planes.

\begin{lemma}\label{lem:maxsides}
Let $0 < \hat{r} < \hat{R}$ be given such that every cell of $\T$ contains a Euclidean disk of radius $\hat{r}$ and is contained in a Euclidean disk of radius $\hat{R}$. Then every cell $C \in \T$ has at most $\frac{9\hat{R}^2}{\hat{r}^2}-1$ neighbors.
\end{lemma}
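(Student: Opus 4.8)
The plan is to bound the number of neighbors of a cell $C \in \T$ by a volume-packing argument, using the uniform inradius and circumradius bounds to control the geometry. First I would fix a cell $C$ and recall that since every edge of $C$ belongs to exactly one other cell (as $\T$ is edge-to-edge in the generalized sense), the number of neighbors of $C$ equals the number of edges $v(C)$, so it suffices to bound $v(C)$. Each neighbor $C'$ of $C$ shares at least an edge with $C$, hence contains a point on $\bd C$; since $C'$ contains a Euclidean disk of radius $\hat r$ and $C$ is contained in a Euclidean disk of radius $\hat R$, the cell $C'$ is contained in the Euclidean disk of radius $3\hat R$ concentric with the circumdisk of $C$ — because any point of $C'$ is within $\diam(C') \le 2\hat R$ of $\bd C$, which is within $2\hat R$ of the center, giving a crude bound; I would tighten the constant so that all neighbors lie in a disk of radius $3\hat R$.

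Next, the key step: the cell $C$ together with all its neighbors consists of pairwise non-overlapping convex bodies (interiors disjoint), each containing a Euclidean disk of radius $\hat r$, and all contained in a disk $D$ of radius $3\hat R$. Comparing areas, if $N$ denotes the number of neighbors, then $C$ plus its $N$ neighbors give $N+1$ interior-disjoint sets each of area at least $\pi \hat r^2$, all inside $D$ of area $9 \pi \hat R^2$, so $(N+1)\pi \hat r^2 \le 9\pi \hat R^2$, i.e. $N \le \frac{9\hat R^2}{\hat r^2} - 1$. This is exactly the claimed bound.

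The main obstacle — really the only place requiring care — is verifying the geometric containment claim, namely that every neighbor of $C$ lies inside a disk of radius $3\hat R$ centered at the center of a circumdisk of $C$. I would argue as follows: let $B(x_0,\hat R) \supseteq C$. If $C'$ is a neighbor, it shares a boundary point $p$ with $C$, so $\|p - x_0\| \le \hat R$; and every point $q \in C'$ satisfies $\|q - p\| \le \diam(C') \le 2\hat R$ (since $C'$ lies in a disk of radius $\hat R$); hence $\|q - x_0\| \le 3\hat R$. Thus $C' \subseteq B(x_0, 3\hat R)$, and the same trivially holds for $C$ itself. One should also note the disjointness of interiors is part of the definition of a tiling, so the area comparison is immediate. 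I would present these estimates compactly, as they are elementary, and conclude with the displayed inequality.
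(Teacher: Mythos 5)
Your proof is correct and is essentially the paper's own argument: enclose $C$ and all its neighbors in a Euclidean disk of radius $3\hat{R}$, then compare areas using the fact that each cell has area at least $\pi\hat{r}^2$. The opening remark identifying the number of neighbors with $v(C)$ is unnecessary (and not quite accurate if neighbors may share only a vertex), but it plays no role in the rest of your argument, which stands on its own.
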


\begin{proof}
For every cell $C \in \T$, there is a Euclidean disk of radius $3\hat{R}$ that contains $C$ as well as all its neighbors. The area of every cell is at least $\hat{r}^2 \pi$. Thus, the assertion follows from a simple area estimate.
\end{proof}

\begin{lemma}\label{lem:discrepancy}
Let $\T_{bd}(R)$ denote the family of cells of $\F$ that intersect the circle $R \Sph^1$. Then $\card (\T_{bd}(R)) = \Theta(R)$.
\end{lemma}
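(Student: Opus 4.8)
The plan is to prove the two one-sided bounds $\card(\T_{bd}(R)) = O(R)$ and $\card(\T_{bd}(R)) = \Omega(R)$ for all sufficiently large $R$; together these give $\card(\T_{bd}(R)) = \Theta(R)$. Throughout I fix constants $0 < \hat r < \hat R$ witnessing the normality of $\T$, so that every cell is contained in a Euclidean disk of radius $\hat R$ (hence has Euclidean diameter at most $2\hat R$ and area at most $\pi \hat R^2$) and contains a Euclidean disk of radius $\hat r$ (hence has area at least $\pi \hat r^2$).

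For the upper bound, I would first observe that if a cell $C$ meets the circle $R\Sph^1$ at a point $p$, then, since $\diam(C) \le 2\hat R$, every point of $C$ has Euclidean norm in the interval $[R - 2\hat R,\, R + 2\hat R]$; thus the union of the cells in $\T_{bd}(R)$ is contained in the annulus of inner radius $R - 2\hat R$ and outer radius $R + 2\hat R$, whose area equals $8\pi \hat R R$. Since the cells of $\T$ have pairwise disjoint interiors and each has area at least $\pi \hat r^2$, comparing areas yields $\card(\T_{bd}(R)) \le \frac{8\hat R}{\hat r^2}\, R$.

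For the lower bound, the idea is to produce explicitly $\Omega(R)$ distinct cells meeting $R\Sph^1$. I would place $N = \lfloor R/\hat R\rfloor$ equally spaced points on $R\Sph^1$; an elementary estimate (using $\sin x \ge \frac{2}{\pi}x$ on $[0,\frac{\pi}{2}]$) shows that, once $R \ge 2\hat R$, the pairwise Euclidean distances of these points are at least $2R\sin(\pi/N) > 2\hat R$. Each of the $N$ points lies in at least one cell of $\T$; choosing one such cell for each point, and using that a cell of diameter at most $2\hat R$ cannot contain two of the chosen points, we obtain $N$ pairwise distinct cells, each of which meets $R\Sph^1$ and hence belongs to $\T_{bd}(R)$. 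Therefore $\card(\T_{bd}(R)) \ge N \ge R/\hat R - 1$.

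Combining the two estimates gives $\card(\T_{bd}(R)) = \Theta(R)$, as claimed. I do not expect a genuine obstacle here: both bounds are elementary consequences of normality, via an area count in one direction and a separation-of-points argument in the other. The only place that warrants a little care is the lower bound, where one must exhibit many \emph{genuinely distinct} cells rather than merely bound the total length, or total angular measure, that the boundary cells cover; the well-separated points on $R\Sph^1$ accomplish exactly this. (Alternatively, one can note that each cell meets $R\Sph^1$ in an arc of angular width $O(1/R)$ and that these arcs cover the circle, which gives the same lower bound.)
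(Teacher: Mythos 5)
Your proposal is correct and follows essentially the same route as the paper: the upper bound is the identical annulus-area count, and your lower bound (well-separated points on $R\Sph^1$, each forcing a distinct cell of diameter at most $2\hat R$) is a minor variant of the paper's count, which bounds the angular width $2\arcsin(\hat R/R)$ of the arc each covering cell can occupy — exactly the alternative you mention in your closing parenthesis.
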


\begin{proof}
Since the diameter of every cell $C \in \T$ is at most $2\hat{R}$, every cell is contained in the closure of the set $(R+2\hat{R}) \BB^2 \setminus (R-2\hat{R}) \BB^2$. Thus,
$\card (\T_{bd}(R)) \leq \frac{(R+2\hat{R})^2 \pi - (R-2\hat{R})^2 \pi}{\hat{r}^2 \pi} = \frac{8 \hat{R} R}{\hat{r}^2}$, implying that $\card (\T_{bd}(R)) = \mathcal{O}(R)$. On the other hand, the union of these cells covers $R \Sph^1$, and the diameter of each cell is at most $2\hat{R}$. Thus, $\card (\T_{bd}(R)) \geq \frac{2 \pi}{2 \arcsin \frac{\hat{R}}{R}}$, implying that $\card (\T_{bd}(R)) = \Omega(R)$.
\end{proof}

\begin{definition}\label{defn:averagev}
We define the \emph{lower average number of sides} of a cell of $\T$ as
\[
\underline{v}(\T) = \liminf_{R \to \infty} \frac{\sum_{C\in\T(R)} v(C) }{\card(\T(R))}.
\]
We define the \emph{upper average number of sides} $\overline{v}(\T)$ of a cell of $\T$ in the same way, replacing $\liminf$ by $\limsup$. If $\underline{v}(\T)=\overline{v}(\T)$, we call this quantity the \emph{average number of sides} of a cell of $\T$.
\end{definition}

\begin{lemma}\label{lem:Euler}
For any $R > 2\hat{R}$, set $v_R(\T) = \frac{\sum_{C\in\T(R)} v(C) }{\card(\T(R))}$. Then, $v_R(\T) \leq 6 + \mathcal{O}\left( \frac{1}{R} \right)$, and $\overline{v}(\T) \leq 6$.
\end{lemma}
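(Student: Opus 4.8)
The plan is to bound $v_R(\T)$ by applying Euler's formula to the part of the tiling contained in $R\BB^2$, and then control the boundary contributions using Lemma~\ref{lem:discrepancy}. First I would let $\T(R)$ play the role of cells, but to get a bona fide planar graph I would actually work with a slightly enlarged finite subcomplex: take all cells of $\T$ contained in $R\BB^2$, together with the vertices and edges they carry, and let $V$, $E$, $F$ be the numbers of vertices, edges, and bounded faces (the latter being exactly the cells in $\T(R)$, so $F = \card(\T(R))$). Euler's formula for this planar graph gives $V - E + F = 1$ (counting only bounded faces), or $V - E + F = 2$ if one includes the outer face. Since the tiling is normal, Lemma~\ref{lem:maxsides} ensures every vertex has bounded degree, so each vertex lies on a bounded number of edges; more importantly, because we may treat $\T$ as edge-to-edge with straight angles permitted, every interior edge is shared by exactly two cells, while edges on the boundary of $\bigcup\T(R)$ may be shared by only one. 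Thus $\sum_{C \in \T(R)} v(C) = 2E - (\text{boundary correction})$, where the boundary correction counts edges adjacent to the unbounded region or to cells not in $\T(R)$; this correction is nonnegative, so $\sum_{C \in \T(R)} v(C) \le 2E$.

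Next I would bring in the vertex count. Every vertex of the subcomplex has degree at least $3$ (a vertex of degree $2$ would be an interior point of an edge, which we have excluded by allowing straight angles, or can be suppressed), so $2E = \sum_v \deg(v) \ge 3V$, giving $V \le \tfrac{2}{3}E$. Combining with Euler's formula $V - E + F = 1$ yields $F = 1 + E - V \ge 1 + E - \tfrac23 E = 1 + \tfrac13 E$, hence $E \le 3(F-1) < 3F$. Therefore
\[
\sum_{C \in \T(R)} v(C) \le 2E \le 6F - 6 = 6\,\card(\T(R)) - 6,
\]
which already gives $v_R(\T) \le 6 - \tfrac{6}{\card(\T(R))} \le 6$. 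To get the sharper error term $6 + \mathcal{O}(1/R)$ stated in the lemma I would need to be careful about the boundary: the slight subtlety is that when we restrict to cells \emph{contained} in $R\BB^2$, the complex $\bigcup\T(R)$ may fail to be simply connected (it could have holes near the boundary circle), which changes the Euler characteristic by the number of holes. Each such hole, however, is a bounded region covered by cells of $\T$ meeting $R\Sph^1$, and by Lemma~\ref{lem:discrepancy} there are only $\Theta(R)$ such cells, so the number of holes, and the total boundary correction to the edge count, is $\mathcal{O}(R)$. Feeding an $\mathcal{O}(R)$ correction into the Euler computation and dividing by $\card(\T(R))$, which is $\Theta(R^2)$ since the cells have areas bounded between $\hat r^2\pi$ and $\hat R^2\pi$, produces $v_R(\T) \le 6 + \mathcal{O}(1/R)$.

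Finally, taking $\limsup_{R \to \infty}$ of $v_R(\T) \le 6 + \mathcal{O}(1/R)$ gives $\overline{v}(\T) \le 6$ immediately.

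I expect the main obstacle to be the careful bookkeeping at the boundary: ensuring that the finite subcomplex is an honest plane graph (no degree-$2$ vertices, no dangling edges), correctly accounting for the Euler characteristic when $\bigcup\T(R)$ is not simply connected, and checking that all the resulting correction terms — the number of boundary cells, the number of holes, the number of boundary edges — are genuinely $\mathcal{O}(R)$ rather than larger. All three follow from normality (bounded degree via Lemma~\ref{lem:maxsides}, $\Theta(R)$ boundary cells via Lemma~\ref{lem:discrepancy}, and $\card(\T(R)) = \Theta(R^2)$ from the area bounds), so none of this is deep, but it is the part where a sloppy argument would go wrong.
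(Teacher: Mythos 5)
Your proposal follows essentially the same route as the paper: form the planar graph carried by $\T(R)$, combine Euler's formula with the minimum-degree-$3$ bound after suppressing the degree-$2$ boundary vertices, control all boundary corrections by $\mathcal{O}(R)$ via Lemmas~\ref{lem:maxsides} and \ref{lem:discrepancy}, and divide by $\card(\T(R))=\Theta(R^2)$ before taking $\limsup$. One caveat: your intermediate claim that the crude count already yields $v_R(\T)\le 6$ exactly is not justified (degree-$2$ vertices on $\bd\left(\bigcup\T(R)\right)$, edges belonging to only one cell of $\T(R)$, and possible holes all spoil the clean Euler inequality), but your corrected version with $\mathcal{O}(R)$ error terms --- which is all the lemma asserts --- is precisely the paper's argument.
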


\begin{proof}
Consider the graph $G$ whose vertices are the vertices of the cells of $\T(R)$, and two vertices are connected by an edge if they are the endpoints of an edge of a cell in $\T(R)$. Then $G$ is a planar graph. Let $V(G)$, $E(G)$ and $F(G)$ denote the numbers of vertices, edges and faces of $G$. Furthermore, let $\E_{bd}(G)$ denote the family of the edges of $G$ in the boundary of $\bigcup \T(R)$. We note that every edge in $\E_{bd}(G)$ belongs to a cell of $\T$ intersecting $R \Sph^1$, and thus, it follows from Lemmas~\ref{lem:maxsides} and \ref{lem:discrepancy} that $E_{bd}(G) := \card (\E_{bd}(G))=\mathcal{O}(R)$. These lemmas, an area estimate and the observation that $\bigcup \T(R)$ contains $(R-2\hat{R}) \BB^2$ and is contained in $(R+2\hat{R}) \BB^2$ shows that $V(G)$, $E(G)$ and $F(G)$ are all of order of magnitude $\Theta(R^2)$.

We observe that there might be vertices of $G$ whose degree is less than $3$. If $p$ is such a vertex, then its degree is $2$, it lies in $\bd \left( \bigcup \T(R) \right)$, and it belongs to two edges in $\E_{bd}(G)$. Thus, the number of such edges is at most $E_{bd}(G) = \mathcal{O}(R)$. We merge these edges into one edge (i.e. we apply SP-reduction) and continue this process until we obtain another planar graph $G'$ in which the degree of every vertex is at least $3$. If $V(G'), E(G')$ and $F(G')$ denote the numbers of vertices, edges and faces of $G'$, then by the construction of $G'$ we have $V(G') \leq V(G)$ and $E(G') \leq E(G)$ with $V(G)-V(G')=E(G)-E(G') = \mathcal{O}(R)$, and $F(G')=F(G)$.

Applying Euler's formula for $G'$ and using the fact that the degree of every vertex of $G'$ is at least $3$ yields that $E(G') \leq 3 F(G') - 6$, implying that $E(G) \leq 3 F(G) + \mathcal{O}(R)$. On the other hand, $v_R(\T) \leq \frac{2 E(G)}{F(G)}$, and as $E(G)$ and $F(G)$ are of $\Theta(R^2)$, it follows that $v_R(\T) \leq 6 + \mathcal{O}\left( \frac{1}{R} \right)$, which readily yields the assertion.
\end{proof}

The original isoperimetric inequality, observed by Zenodorus in ancient Greece, states that among regions of a given perimeter in the Euclidean plane, Euclidean disks have maximum area. This result was generalized for any normed plane by Busemann \cite{Busemann} in the following form. 

\begin{theorem}[Busemann]\label{thm:Busemann}
Let $\M$ be a normed plane. The area enclosed by a simple, closed curve $\Gamma$ of a given $M$-length is maximized if $\Gamma$ is the boundary of a plane convex body $K$ homothetic to the so-called \emph{isoperimetrix} $M_{iso}$ of $\M$, obtained as the polar of the rotated copy of the unit disk $M$ of $\M$ by $\frac{\pi}{2}$ (see Figure~\ref{fig:isoperimetrix}).
\end{theorem}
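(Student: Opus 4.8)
The plan is to reduce the statement to convex curves, reinterpret the $M$-perimeter of a convex body as twice a mixed area, and then read off the conclusion from Minkowski's inequality for mixed areas in the plane; one could instead reproduce Busemann's original two-point symmetrization argument, but the mixed-area route is considerably shorter. For the reduction, let $\Gamma$ be a simple closed curve of finite $M$-length $L$ bounding a planar region $\Omega$, and put $K=\conv(\Omega)$. The boundary $\bd K$ consists of sub-arcs of $\Gamma$ together with (at most countably many) line segments $[p,q]$, each of which replaces a sub-arc of $\Gamma$ joining $p$ to $q$; since $\|\cdot\|_M$ is a norm, the $M$-length of such a sub-arc is at least $\|q-p\|_M$, so this replacement does not increase the total $M$-length. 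Hence $\perim_M(K)\le L$ while $\area(K)\ge\area(\Omega)$, and (after rescaling $K$ so that $\perim_M(K)=L$, which only increases the area) it suffices to show that among convex bodies of a given $M$-perimeter the area is maximal precisely for homothets of $M_{iso}$; since a homothet of $M_{iso}$ is itself convex, hence its own convex hull, this yields the full statement.

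Next comes the key identity. Write $h_K$ for the support function of a convex body $K$ and $\rho_K=h_K+h_K''$ for its radius of curvature, interpreted as a measure when $K$ is not of class $C^2_+$. Parametrizing $\bd K$ by the outer unit normal direction $\theta\in[0,2\pi)$, the Euclidean arc-length element is $\rho_K(\theta)\,d\theta$ and the unit tangent points in direction $\theta+\tfrac{\pi}{2}$. The $M$-length of a Euclidean-unit segment in a direction $u$ equals $\|u\|_M$, which is the value $h_{M^*}(u)$ of the support function of the Euclidean polar $M^*$ of $M$; and by the very definition of the isoperimetrix, together with $M^*=-M^*$, one has $h_{M^*}(\theta+\tfrac{\pi}{2})=h_{M_{iso}}(\theta)$. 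Therefore
\[
\perim_M(K)=\int_0^{2\pi} h_{M_{iso}}(\theta)\,\rho_K(\theta)\,d\theta=2\,A(K,M_{iso}),
\]
where $A(\cdot,\cdot)$ denotes the mixed area and the second equality is the classical formula $2A(K,L)=\int_0^{2\pi}h_L\,\rho_K\,d\theta$ (which also requires the standard fact that for a convex curve the supremum over inscribed polygons defining $\perim_M$ agrees with the above line integral).

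Finally I would invoke Minkowski's inequality for mixed areas in the plane (equivalent to the planar Brunn--Minkowski inequality): $A(K,L)^2\ge\area(K)\,\area(L)$, with equality if and only if $K$ and $L$ are homothetic. Taking $L=M_{iso}$ and combining with the displayed identity gives
\[
\area(K)\le\frac{A(K,M_{iso})^2}{\area(M_{iso})}=\frac{\bigl(\perim_M(K)\bigr)^2}{4\,\area(M_{iso})},
\]
with equality exactly when $K$ is homothetic to $M_{iso}$; by the first paragraph this is the assertion.

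The step I expect to be most delicate is the rigorous proof of the mixed-area identity for $\perim_M$ without smoothness or strict convexity hypotheses on $M$ or $K$: one should either phrase the line integral in terms of the curvature (surface area) measure of $K$, disintegrated along the normal direction, or approximate $K$ and $M$ by smooth, strictly convex bodies, establish the identity there, and pass to the limit using that $\perim_M$ and $\area$ are continuous with respect to the Hausdorff metric. A minor related point is that the equality characterization in Minkowski's inequality must be transferred back to the original, possibly non-convex problem; this is automatic, since an extremal convex body here is a homothet of $M_{iso}$ and hence already its own convex hull.
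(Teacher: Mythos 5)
Your argument is correct, but note that the paper offers no proof to compare against: Theorem~\ref{thm:Busemann} is quoted as a classical result with a citation to Busemann's 1947 paper, so this is a statement the authors import rather than establish. Your route --- reduce to convex curves via the convex hull (the chord $[p,q]$ has $M$-length at most that of the arc it replaces, by the triangle inequality applied to inscribed polygons), identify $\perim_M(K)=\int_0^{2\pi}h_{M_{iso}}(\theta)\,\rho_K(\theta)\,d\theta=2A(K,M_{iso})$ using $\|u\|_M=h_{M^{\circ}}(u)$ and the quarter-turn between tangent and normal directions, and then apply the planar Minkowski inequality $A(K,L)^2\ge\area(K)\area(L)$ with its equality case --- is the standard modern proof of the Minkowski-plane isoperimetric inequality (it is essentially the treatment one finds in Chakerian's paper and in Thompson's book, both cited here), and every step is sound, including the equality characterization and the observation that an extremal body is its own convex hull. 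The delicate point you flag, namely justifying the mixed-area identity without smoothness of $K$ or $M$, is genuine but handled adequately either by reading the integral against the surface-area measure $dS_K=\rho_K\,d\theta$ or by Hausdorff-continuity of $\perim_M$, $\area$ and mixed areas under smooth approximation; also be aware that the two phrasings of the isoperimetrix in the paper (polar then rotate, versus rotate then polar) agree since rotation commutes with polarity, so your use of $h_{M_{iso}}(\theta)=h_{M^{\circ}}(\theta+\tfrac{\pi}{2})$ is consistent with the statement.
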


\begin{figure}[ht]
\begin{center}
\includegraphics[width=0.8\textwidth]{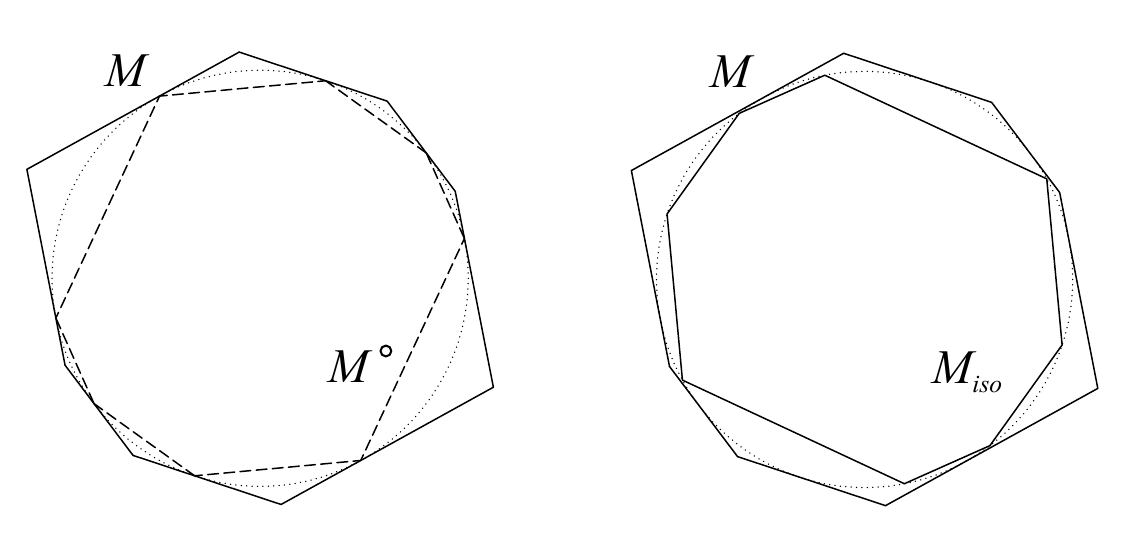}
\caption{The isoperimetrix $M_{iso}$ of a norm with unit disk $M$. The dotted circle is the Euclidean unit disk $\BB^2$ centered at $o$. The left-hand side panel shows $M$ and its polar $M^{\circ}$, the isoperimetrix in the right-hand side panel is a rotated copy of $M^{\circ}$ by $\frac{\pi}{2}$}.
\label{fig:isoperimetrix}
\end{center}
\end{figure}

We note that by Theorem~\ref{thm:Busemann}, the isoperimetrix of the Euclidean plane is a Euclidean circular disk.

A variant of this inequality, in Euclidean plane, states that among convex polygons with given perimeter and fixed directions of their sides, the ones circumscribed about a circular disk have maximal area. This statement is called L'Huilier's inequality, which is generalized for arbitrary normed planes by Chakerian \cite{GDC} in the following way.

\begin{theorem}[Chakerian]\label{thm:Chakerian}
Let $\M$ be a normed plane with unit disk $M$. Let the isoperimetrix of the plane be $M_{iso}$. Let $K$ be an arbitrary convex $n$-gon in $\M$, and let $K^*$ be the convex $n$-gon circumscribed about $M_{iso}$ whose sides have the same outer unit normals as the sides of $K$. Let the $M$-perimeter of $K$ be $L$, the area of $K$ be $F$, and the area of $K^*$ be $f$. Then
\[
L^2 - 4fF \geq 0,
\]
with equality if and only if $K$ is homothetic to $K^*$. 
\end{theorem}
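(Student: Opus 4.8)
The plan is to deduce Chakerian's inequality from the two‑dimensional Minkowski (first) inequality, after rewriting the $M$‑perimeter of a polygon as twice a suitable mixed area. For planar convex bodies $C,D$ let $A(C,D)$ be their mixed area, normalized so that $A(C,C)=\area(C)$, and let $h_D$ denote the support function of $D$; recall that if $C$ is a convex polygon with edges of Euclidean length $\ell_j$ and outer unit normals $u_j$, then $2A(C,D)=\sum_j \ell_j\,h_D(u_j)$.

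The key step is to prove that $\perim_M(K)=2A(K,M_{iso})$ for a convex polygon $K$ with edge lengths $\ell_j$ and outer unit normals $u_j$. The $j$‑th side of $K$ is a translate of the segment $[o,\ell_j\,\rho_{\pi/2}(u_j)]$, where $\rho_{\theta}$ is the rotation by $\theta$ about $o$; hence, since the supremum in the definition of $\perim_M$ is attained by $K$ itself (by subadditivity of $\|\cdot\|_M$), $\perim_M(K)=\sum_j \ell_j\,\|\rho_{\pi/2}(u_j)\|_M$. Now the gauge of $M$ is the support function of the Euclidean polar $M^{\circ}$, so $\|\rho_{\pi/2}(u)\|_M=h_{M^{\circ}}(\rho_{\pi/2}(u))=h_{\rho_{-\pi/2}(M^{\circ})}(u)$ by the transformation rule for support functions under rotations; and since $M_{iso}=\rho_{\pi/2}(M^{\circ})$, we have $\rho_{-\pi/2}(M^{\circ})=\rho_{-\pi}(M_{iso})=-M_{iso}=M_{iso}$, the isoperimetrix being centrally symmetric. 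Thus $\|\rho_{\pi/2}(u)\|_M=h_{M_{iso}}(u)$ and $\perim_M(K)=\sum_j\ell_j\,h_{M_{iso}}(u_j)=2A(K,M_{iso})$.

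Because $K$ is a polygon, $A(K,D)=\tfrac12\sum_j\ell_j\,h_D(u_j)$ depends on $D$ only through its support values $h_D(u_j)$ at the finitely many normals $u_j$ of $K$; and by construction the side of $K^{*}$ with outer normal $u_j$ is the supporting line of $M_{iso}$ in direction $u_j$, so, as $M_{iso}\subseteq K^{*}$ attains its support value there, $h_{K^{*}}(u_j)=h_{M_{iso}}(u_j)$ for every $j$. Hence $A(K,M_{iso})=A(K,K^{*})$, so $L=\perim_M(K)=2A(K,K^{*})$. Finally, Minkowski's first inequality for planar convex bodies yields $A(K,K^{*})^2\ge \area(K)\,\area(K^{*})=Ff$, with equality exactly when $K$ and $K^{*}$ are homothetic; combining, $L^2=4A(K,K^{*})^2\ge 4Ff$, i.e. $L^2-4fF\ge 0$, with equality if and only if $K$ is homothetic to $K^{*}$.

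The one step with genuine content is the identity $\perim_M(K)=2A(K,M_{iso})$: it is exactly what the isoperimetrix is built to encode (and the mechanism behind Busemann's Theorem~\ref{thm:Busemann}), so care is needed tracking the chain from the gauge of $M$, through $h_{M^{\circ}}$ and the $\tfrac{\pi}{2}$‑rotation, to $h_{M_{iso}}$. The remaining points are routine: if several normals $u_j$ coincide then $K^{*}$ degenerates to a polygon with straight vertices, but $h_{K^{*}}(u_j)=h_{M_{iso}}(u_j)$ and $\inter K^{*}\ne\emptyset$ persist, so the argument goes through unchanged. I would also point out that this proof gives strictly more than Busemann's inequality: applying Minkowski to $(K,M_{iso})$ would only give $L^2\ge 4\area(K)\area(M_{iso})$, whereas applying it to $(K,K^{*})$ — legitimate since $A(K,M_{iso})=A(K,K^{*})$ for the polygon $K$ — combined with $\area(K^{*})\ge\area(M_{iso})$ (as $K^{*}\supseteq M_{iso}$) gives the sharper, L'huilier‑type estimate, which is the point of stating Chakerian's theorem in this form.
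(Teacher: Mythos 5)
Your proof is correct, but note that the paper offers no proof of this statement to compare against: it is imported verbatim as Chakerian's theorem with a citation to \cite{GDC}, and is used as a black box in Section~\ref{sec:main}. What you give is essentially the standard mixed-area derivation: the identity $\perim_M(K)=\sum_j \ell_j h_{M_{iso}}(u_j)$ (twice the mixed area of $K$ and $M_{iso}$) is exactly the property the isoperimetrix is designed to have, and your verification of it — gauge of $M$ equals support function of the Euclidean polar $M^{\circ}$, the rotation rule for support functions, and $\rho_{-\pi/2}(M^{\circ})=-M_{iso}=M_{iso}$ by central symmetry — is accurate; the step $h_{K^*}(u_j)=h_{M_{iso}}(u_j)$ (since the side of $K^*$ with normal $u_j$ lies on the supporting line of $M_{iso}$) correctly converts the mixed area with $M_{iso}$ into the mixed area with $K^*$, and Minkowski's first inequality in the plane, with its equality case, then yields $L^2\geq 4fF$ with equality precisely when $K$ is homothetic to $K^*$. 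Two details you handle appropriately and that are worth keeping explicit: that for a convex polygon the supremum in the paper's definition of $M$-perimeter is attained by the polygon itself, so $\perim_M(K)$ really is the sum of the $M$-lengths of its edges; and that repeated outer normals (relevant in Section~\ref{sec:main}, where cells may have straight angles) only degenerate $K^*$ harmlessly, since $h_{K^*}(u_j)=h_{M_{iso}}(u_j)$ persists. Your closing observation — that applying Minkowski's inequality to the pair $(K,K^*)$ rather than $(K,M_{iso})$ is what produces the sharper, L'huilier-type bound — is also a fair articulation of why the theorem is stated in this form.
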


Now we are ready to prove Theorem~\ref{thm:generalized}. We assume that $M_{iso}$ satisfies the weak $\alpha$-Dowker property for some $\alpha \in (0,\infty)$; for the case $\alpha =0$ a straightforward modification of our argument can be applied.
From now on we also assume that every cell of $\T$ has unit area. We need to consider the following quantity.
\[
\underline{P}_{2\alpha}(\T) = \liminf_{R \to \infty} \frac{\sum_{C\in\T(R)} \left( \perim_M(C) \right)^{2\alpha} }{\card(\T(R))}.
\]
For any $C \in \T(R)$, let $C^*$ denote the convex polygon circumscribed about $M_{iso}$ such that the sides of $C$ and $C^*$ have the same outer unit normals, and let $v(C)$ denote the number of sides of $C$.
Then, by Theorem~\ref{thm:Chakerian}, and since $\area(C)=1$ for any $C \in \T(R)$, we have
\[
\underline{P}_{2\alpha}(\T) \geq 4^{\alpha}\liminf_{R \to \infty} \frac{\sum_{C \in \T(R)} \left( \area(C^*) \right)^{\alpha} }{\card \T(R)} \geq 4^{\alpha} \liminf_{R \to \infty} \frac{\sum_{C \in \T(R)} \left( A_{M_{iso}}(v(C)) \right)^{\alpha} }{\card \T(R)}.
\]
where $A_{M_{iso}}(v(C))$ denotes the area of a minimum area polygon with $v(C)$ sides, circumscribed about $M_{iso}$ (see Theorem~\ref{thm:Dowker1}).
By Lemma~\ref{lem:Euler}, $\overline{v}(\T) \leq 6$. Now, we leave to the reader that the fact that $A_{M_{iso}}(v(C))$ is weak $\alpha$-Dowker and monotone decreasing yields the following: If $v_1, \ldots, v_k \geq 3$ are real numbers and $0 \leq \tau_1, \tau_2 \ldots, \tau_k \leq 1$ are weights satisfying $\sum_{i=1}^k \tau_i=1$ and $\sum_{i=1}^k \tau_k v_k \leq 6$, then $\sum_{i=1}^k \tau_i \left( A_{M_{iso}}(v_i) \right)^{\alpha} \geq \left( A_{M_{iso}}(6) \right)^{\alpha}$. 
Based on this, it follows that
\[
\underline{P}_{2\alpha}(\T) \geq 4^{\alpha} \left( A_{M_{iso}}(6) \right)^{\alpha}.
\]
Now the only thing we need to show is that for some hexagonal tiling $\T_{hex}$ of $\M$, we have $P_{2\alpha}(\T_{hex}) = 4^{\alpha} \left( A_{M_{iso}}(6) \right)^{\alpha}$.
To do it we recall another theorem of Dowker from \cite{CHD}.

\begin{theorem}[Dowker]\label{thm:Dowker2}
Let $K$ be an $o$-symmetric convex disk. Then, for every $m \geq 2$, there is an $o$-symmetric convex $(2m)$-gon $P$ circumscribed about $K$ with $\area(P)=A_K(2m)$.
\end{theorem}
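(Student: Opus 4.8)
The plan is to establish the symmetrized Dowker theorem by a perturbation/averaging argument applied to an optimal circumscribed polygon. Fix $m \geq 2$ and let $P$ be a convex $(2m)$-gon circumscribed about $K$ with $\area(P) = A_K(2m)$; such a minimizer exists by a standard compactness argument, since circumscribed $(2m)$-gons of bounded area form a compact family (one may also allow degenerate ones with fewer effective sides, but convexity of the sequence $\{A_K(n)\}$ from Theorem~\ref{thm:Dowker1} rules this out for the minimizer, or at worst we pass to a polygon with the full count of sides having the same area). The key construction is to consider $P' = -P$, the reflection of $P$ through the origin. Since $K = -K$, the polygon $P'$ is also a $(2m)$-gon circumscribed about $K$, and $\area(P') = \area(P) = A_K(2m)$.

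Next I would form the intersection $Q = P \cap P'$. This is an $o$-symmetric convex polygon, it contains $K$ (as $K \subseteq P$ and $K = -K \subseteq -P = P'$), and each of its sides lies on a side either of $P$ or of $P'$; hence $Q$ has at most $2m + 2m = 4m$ sides, but by central symmetry the sides come in antipodal parallel pairs, and one checks that in fact the supporting lines of $Q$ are among the $2m$ supporting lines of $P$ together with their reflections, which is again a set of at most $2m$ directions (since the $2m$ normals of $P$, unioned with their negatives, still has size $\le 2m$ only if $P$ itself were already symmetric — in general it has size up to $4m$). So a cruder bound gives $4m$ sides; to get down to $2m$ one needs the averaging idea rather than pure intersection.

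The cleaner route, which I expect to be the intended one, is this: I would instead look for the minimum among $o$-symmetric circumscribed $(2m)$-gons, call its infimal area $A^{sym}_K(2m)$, and show $A^{sym}_K(2m) = A_K(2m)$. The inequality $A^{sym}_K(2m) \geq A_K(2m)$ is trivial. For the reverse, take an optimal (not necessarily symmetric) circumscribed $(2m)$-gon $P$ and consider the one-parameter family obtained by ``symmetrizing'' it: parametrize $P$ by the $2m$ outer normal directions $u_1, \dots, u_{2m}$ and support values $h_P(u_i)$, where $h_P(u_i) = h_K(u_i)$ since $P$ is circumscribed. The area of a circumscribed polygon with prescribed normals is an affine-linear function of... no — rather, for a fixed cyclic set of normal directions, the circumscribed polygon is uniquely determined (each side is the supporting line of $K$ in that direction), so $\area$ is a function of the directions alone. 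Then I would argue: given the optimal direction set $\{u_i\}$, replace it by the symmetric direction set $\{u_i\} \cup \{-u_i\}$ — but that may have $4m$ elements. To keep $2m$, pair up and average: consider rotating the ``odd'' normals toward the negatives of the ``even'' ones. One shows that the area, as a function of this rotation parameter, is either monotone or convex/concave in a way that lets one reach the fully symmetric configuration without increasing area, using the first Dowker theorem (convexity of $A_K(n)$) as the engine — indeed the slickest proof of Theorem~\ref{thm:Dowker2} in the literature (Dowker's original) deduces it from Theorem~\ref{thm:Dowker1} together with a symmetrization: if $P$ is optimal with $2m$ sides, then $P \cap (-P)$ has some number $k \leq 2m$ of sides with $\area(P \cap (-P)) \leq \area(P) = A_K(2m) \leq A_K(k)$ by monotonicity of $A_K$ (which follows from convexity plus the trivial bound), forcing $k = 2m$ and $P \cap (-P)$ to be an $o$-symmetric optimal $(2m)$-gon. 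Wait — $P \cap (-P)$ can have up to $4m$ sides, so this needs $A_K(k) \le A_K(2m)$ for $k$ up to $4m$, i.e.\ $A_K$ eventually decreasing, contradiction; so instead one must note $A_K(k) \leq \area(P\cap(-P)) \leq A_K(2m)$ gives, since $k \geq 2m$ would make $A_K(k) \geq$ nothing useful...

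Let me record the plan honestly: I would follow Dowker's original argument. Let $P$ be a circumscribed $(2m)$-gon attaining $A_K(2m)$. Then $P \cap (-P)$ is an $o$-symmetric circumscribed convex polygon with at most $4m$ sides and area $\le A_K(2m)$. If it has $2j$ sides with $j \le 2m$, monotonicity of $\{A_K(n)\}$ (nonincreasing, a consequence of Theorem~\ref{thm:Dowker1} and the fact that $A_K(n) \to \area(K)$) gives $A_K(2j) \le \area(P\cap(-P)) \le A_K(2m)$, hence $2j \ge 2m$ hence $2j \ge 2m$, combined with... if $2j > 2m$ we do not yet have a contradiction. The fix: among all $o$-symmetric circumscribed polygons, take one, $Q$, with the fewest sides, say $2j_0$; then drop sides in antipodal pairs — removing one pair can only increase area and keeps symmetry and circumscription possible only down to... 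The genuinely correct statement we need and will prove is just existence of an $o$-symmetric circumscribed $(2m)$-gon of area exactly $A_K(2m)$, and the hard part — the step I expect to be the main obstacle — is controlling the number of sides after the symmetrizing intersection, which is handled by combining (i) $\area(P \cap (-P)) \le A_K(2m)$, (ii) the convexity/monotonicity of $\{A_K(n)\}$ from Theorem~\ref{thm:Dowker1}, and (iii) if $P \cap (-P)$ has more than $2m$ sides, merging consecutive sides of $P\cap(-P)$ in antipodal pairs to reduce to exactly $2m$ sides while, by convexity of the Dowker sequence, not increasing the area below $A_K(2m)$ — yielding an $o$-symmetric circumscribed $(2m)$-gon of area $\le A_K(2m)$, and hence (by definition of $A_K(2m)$ as an infimum) of area exactly $A_K(2m)$. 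I would then remark that the infimum is attained (not merely approached) by the usual compactness argument, completing the proof.
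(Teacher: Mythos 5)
Your final plan has a genuine gap at its crucial step. Passing from the optimal $(2m)$-gon $P$ to $P\cap(-P)$ does give an $o$-symmetric circumscribed polygon of area at most $A_K(2m)$, but it may have up to $4m$ sides, and your proposed repair --- ``merging consecutive sides in antipodal pairs'' to get back down to $2m$ sides without losing the area bound, justified by convexity of $\{A_K(n)\}$ --- does not work. Deleting supporting lines from a circumscribed polygon can only \emph{increase} its area, so after reducing $P\cap(-P)$ to $2m$ sides you have no upper bound by $A_K(2m)$ anymore; and the convexity of the Dowker sequence is a statement about the infimal areas $A_K(n)$, not about what happens to the area of this particular polygon when sides are removed, so it cannot rescue the inequality. (Your other sketched routes --- the rotation/averaging of normal directions, and the monotonicity argument forcing the side count --- are abandoned mid-stream and, as you yourself note, do not close either.) Note also that the paper does not prove this statement; it quotes it from Dowker's paper \cite{CHD}.

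The correct argument (Dowker's original) symmetrizes differently: instead of intersecting $P$ with $-P$, one interlaces them. Let $p_1,\dots,p_{2m}$ be the touching points of $P$ on $\bd(K)$; then $-p_1,\dots,-p_{2m}$ are those of $-P$, and the union is a cyclically ordered set of $4m$ points invariant under the half-turn, which therefore shifts the cyclic order by exactly $2m$ positions. Splitting this sequence into its two alternating subsequences of $2m$ points each, the evenness of $2m$ guarantees that \emph{each} subsequence is itself antipodally invariant, so the two circumscribed $2m$-gons $Q_1,Q_2$ they determine are $o$-symmetric. The same quadrilateral ``uncrossing'' inequality that drives the proof of Theorem~\ref{thm:Dowker1} (it appears in this paper as inequality (\ref{eq:Dowkerold}) in the proof of Theorem~\ref{thm:betterthanone}) yields $\area(Q_1)+\area(Q_2)\le\area(P)+\area(-P)=2A_K(2m)$, so at least one $Q_i$ is an $o$-symmetric circumscribed $(2m)$-gon of area at most, hence exactly, $A_K(2m)$. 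Your compactness remark for the existence of the minimizer $P$ is fine and is the only part of your write-up that survives unchanged.
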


By Theorem~\ref{thm:Dowker2}, there is an $o$-symmetric hexagon $H$ circumscribed about $M_{iso}$ of area $A_{M_{iso}}(6)$. Let $\T_{hex}$ be a tiling whose cells are translates of a homothetic copy of $H$ of unit area. Then the desired equality follows from the equality part of Theorem~\ref{thm:Chakerian}.

\begin{remark}\label{rem:Morgan}
If the isoperimetrix $M_{iso}$ of the norm has at most six sides (which is equivalent to the property that $M$ has at most six sides), then $M$ satisfies the log-honeycomb property.
\end{remark}

\section{Results about polygonal norms}\label{sec:polygonal}

The goal of this section is to investigate the $\alpha$-honeycomb properties of polygonal norms, and the $\alpha$-Dowker properties of polygons.

\subsection{An algorithm to check the $\alpha$-honeycomb property of a polygonal norm}\label{subsec:algorithm}

There is no straightforward way to check whether in a specific normed plane the honeycomb conjecture is true or not. Nevertheless, the sufficient condition in Theorem~\ref{thm:generalized} can be checked using a suitable algorithm.
In this subsection we give a simple algorithm that checks if the isoperimetrix of a polygonal norm satisfies the $\alpha$-Dowker, or weak $\alpha$-Dowker property for some given $\alpha \in [0,\infty)$. We assume that $\alpha > 0$, and note that our argument can be modified in a natural way for the log-Dowker or weak log-Dowker property. 

\begin{remark}\label{rem:checkingDowker}
If the unit disk $M$ of a norm is a $(2k)$-gon, then $M_{iso}$ is also a $(2k)$-gon, and for any $n \geq 2k$, we clearly have $A_M(n)=\area(M_{iso})$. Thus, to check if $M^*$ satisfies the $\alpha$-Dowker property, it is sufficient to check if the inequality
\begin{equation}\label{eq:algorithm}
A_{M_{iso}}^{\alpha}(n-1) + A_{M_{iso}}^{\alpha}(n+1) \geq 2 A_{M_{iso}}^{\alpha}(n)
\end{equation}
holds for all values $4 \leq n \leq 2k$.
Similarly, to check if $M_{iso}$ satisfies the weak $\alpha$-Dowker property, it is enough to check if the inequality in (\ref{eq:weakalphadowker}) is satisfied for $m=3,4,5$ and $6 < n \leq 2k$.
\end{remark}

We also recall that there is an algorithm, given by Aggarwal, Chang and Yap \cite{ACY85} that computes, for any given convex polygon $P$ with $s$ sides, the area of a minimum area circumscribed convex $t$-gon for any given $3 \leq t < s$, in $\mathcal{O}(s^2 \log s \log t)$ steps.

Thus, given an $o$-symmetric $(2k)$-gon $M$, we can provide the following algorithm to check if, say, the weak $\alpha$-Dowker property is satisfied for $M_{iso}$.

\begin{enumerate}
\item[Step 1]: We compute $M_{iso}$. Since $M_{iso}$ is the rotated copy of the polar of $M$ by $\frac{\pi}{2}$, and since the polar of an $o$-symmetric convex $(2k)$-gon can be computed in $\mathcal{O}(k)$ steps, this step requires $\mathcal{O}(k)$ steps.
\item [Step 2]: Using the algorithm in \cite{ACY85}, we compute the quantities $A_{M_{iso}}(n)$ for all $3 \leq n \leq 2k$. This step requires $\mathcal{O}(k^3 \log^2 k)$ steps.
\item[Step 3]: We check the inequality in (\ref{eq:weakalphadowker}) for $m=3,4,5$ and all $6 < n \leq 2k$, in $\mathcal{O}(k)$ steps.
\end{enumerate}

To check the $\alpha$-Dowker property of $M_{iso}$, in Step 3 we check the inequality in (\ref{eq:algorithm}) in $\mathcal{O}(k)$ steps.

\subsection{Norms where the unit disk is a regular polygon}\label{subsec:regpol}

In this subsection we investigate which normed planes, with an $o$-symmetric regular polygon as their unit disk, satisfy the honeycomb property.
Our main result is as follows.

\begin{theorem}\label{thm:regularmain}
If the unit disk of a normed plane $\M$ is a regular $(2k)$-gon with $k \neq 4,5,7$, then $\M$ satisfies the honeycomb property.
\end{theorem}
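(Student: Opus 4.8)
The plan is to use Theorem~\ref{thm:generalized}: it suffices to show that for a regular $(2k)$-gon unit disk $M$ with $k \neq 4,5,7$, the isoperimetrix $M_{iso}$ satisfies the weak Dowker property (i.e.\ the weak $1$-Dowker property), since this yields the $2$-honeycomb property, but by combining with Remark~\ref{rem:monotonicity} and the fact that hexagonal tilings achieve equality at every exponent, one in fact gets the full honeycomb property (the $1$-honeycomb property). First I would observe that since $M$ is a regular $(2k)$-gon, its Euclidean polar is again a regular $(2k)$-gon, so $M_{iso}$ is (up to similarity) a regular $(2k)$-gon $R_{2k}$. By Remark~\ref{rem:checkingDowker}, checking the weak Dowker property for $R_{2k}$ reduces to verifying the finitely many inequalities
\[
\frac{n-6}{n-m} A_{R_{2k}}(m) + \frac{6-m}{n-m} A_{R_{2k}}(n) \geq A_{R_{2k}}(6)
\]
for $m \in \{3,4,5\}$ and $6 < n \leq 2k$ (for $n \geq 2k$ one has $A_{R_{2k}}(n) = \area(R_{2k})$, so only $n < 2k$ together with $n=2k$ matter, a finite check for each fixed $k$, but we need it uniformly in $k$).

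The key step is therefore to compute, or at least usefully estimate, $A_{R_{2k}}(n)$: the minimal area of a convex $n$-gon circumscribed about a regular $(2k)$-gon. For $n \mid 2k$ (and more generally when the optimal circumscribed polygon can be taken with sides parallel to a symmetric selection of sides of $R_{2k}$), there should be a clean closed form; the genuinely delicate cases are the "generic" $n$ not dividing $2k$, where the minimizing circumscribed $n$-gon need not share symmetry with $R_{2k}$. Here I would invoke the structural results of Dowker (Theorem~\ref{thm:Dowker1}, and for even arguments Theorem~\ref{thm:Dowker2}) together with the known description of optimal circumscribed polygons about a regular polygon: the minimal circumscribed $n$-gon about $R_{2k}$ is, up to rotation, obtained by a "greedy" placement where all but at most one side is tangent along a full side of $R_{2k}$, which reduces $A_{R_{2k}}(n)$ to an explicit elementary (trigonometric) function of $k$ and $n$. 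I expect the main obstacle to be exactly this: pinning down $A_{R_{2k}}(n)$ precisely enough in the non-divisible cases to make the weak Dowker inequality provable, and isolating why $k = 4,5,7$ (i.e.\ regular octagon, decagon, $14$-gon as isoperimetrices) are genuine exceptions — presumably the convexity/weak-convexity of $\{A_{R_{2k}}(n)\}$ near $n=6$ fails there, which would also need to be exhibited by explicit computation.

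Once $A_{R_{2k}}(n)$ is in hand, the remaining work is to verify the displayed inequality. I would treat the three values $m=3,4,5$ separately. For each, since $A_{R_{2k}}(6)$ is a fixed explicit number (for $k \geq 3$, a regular hexagon circumscribed about $R_{2k}$, which for $3 \mid 2k$, i.e.\ $k$ a multiple of $3$, is particularly simple), the inequality becomes a one-parameter family in $n$ (with $k$ as a second parameter). The natural approach is: for fixed $k$, the left-hand side as a function of $n \in \{7,\dots,2k\}$ is the secant line from $(m, A_{R_{2k}}(m))$ to $(n, A_{R_{2k}}(n))$ evaluated at $6$; by Dowker's Theorem~\ref{thm:Dowker1} the sequence $\{A_{R_{2k}}(n)\}$ is convex, so the worst case is $n$ as small as possible, namely $n=7$. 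Thus the whole claim collapses to checking, for each $m\in\{3,4,5\}$ and each $k \notin \{4,5,7\}$,
\[
(7-6)\,A_{R_{2k}}(m) + (6-m)\,A_{R_{2k}}(7) \geq (7-m)\,A_{R_{2k}}(6),
\]
i.e.\ a three-term inequality among $A_{R_{2k}}(m), A_{R_{2k}}(6), A_{R_{2k}}(7)$. For large $k$ these quantities converge to the corresponding areas of minimal $m$-, $6$-, $7$-gons circumscribed about the Euclidean disk, namely $m\tan(\pi/m)$, $6\tan(\pi/6)=2\sqrt3$, $7\tan(\pi/7)$, and the limiting inequalities are strict and known (this is essentially Corollary~\ref{cor:Euclidean}), so the inequality holds for all sufficiently large $k$ with room to spare; the finitely many remaining small $k$ (say $k \leq $ some explicit bound, excluding $4,5,7$) are dispatched by direct computation, e.g.\ with the algorithm of Aggarwal–Chang–Yap cited in Section~\ref{subsec:algorithm}. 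The one subtlety to be careful about is that $A_{R_{2k}}(n)$ for $n$ near $2k$ is not smooth in $n$ and the convexity-in-$n$ reduction to $n=7$ must be justified via Theorem~\ref{thm:Dowker1} rather than by any ad hoc monotonicity; and the exceptional $k\in\{4,5,7\}$ must be shown to actually violate the inequality (or at least the honeycomb property), which I would verify by exhibiting the relevant circumscribed polygons explicitly.
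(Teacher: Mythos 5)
There is a genuine gap, and it sits at the very first step of your plan: you propose to verify the \emph{weak $1$-Dowker property} of $M_{iso}$, obtain the $2$-honeycomb property from Theorem~\ref{thm:generalized}, and then upgrade to the honeycomb ($1$-honeycomb) property via Remark~\ref{rem:monotonicity}. That upgrade is invalid, because the inequality in Remark~\ref{rem:monotonicity} points the wrong way: for $\alpha=1<\beta=2$ it gives $\underline{P}_{1}(\T) \leq \left( \underline{P}_{2}(\T) \right)^{1/2}$, so a lower bound on $\underline{P}_2(\T)$ (the $2$-honeycomb property) yields no lower bound on $\underline{P}_1(\T)$; the monotonicity only shows that the $1$-honeycomb property \emph{implies} the $2$-honeycomb property, not conversely. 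A quick sanity check confirms the flaw: the weak $1$-Dowker property as displayed in (\ref{eq:weakalphadowker}) with $\alpha=1$ is an immediate consequence of the convexity of $\{A_K(n)\}$ from Dowker's Theorem~\ref{thm:Dowker1}, so it holds for \emph{every} convex disk; if your upgrade were correct, every normed plane would satisfy the honeycomb property and there would be nothing special about $k=4,5,7$. What the theorem actually requires, via Theorem~\ref{thm:generalized} with $\alpha=\tfrac12$, is the \emph{weak $\tfrac12$-Dowker property} of the regular $(2k)$-gon, i.e.\ the inequality (\ref{eq:weakalphadowker}) for the sequence $\sqrt{A(k,n)}$ at $n=6$. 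This is genuinely stronger (concavity of $t\mapsto\sqrt{t}$ destroys convexity; indeed Remark~\ref{rem:onlyweak} shows $P_k$ fails the $\alpha$-Dowker property for every $\alpha<1$ when $k\geq4$), and it is exactly this square-root inequality that fails for $k\in\{4,5,7\}$ — which is the paper's Theorem~\ref{thm:regularDowker}.

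A secondary consequence of the same misstep: your reduction of the verification to the single case $n=7$ rests on convexity of $\{A_{R_{2k}}(n)\}$, which is unavailable for the relevant sequence $\{\sqrt{A(k,n)}\}$. The paper instead bounds the left-hand side of (\ref{eq:regularforproof}) from below using $A(k,n)\geq A_{\BB^2}(n)=n\tan\frac{\pi}{n}$ (so the worst case of the \emph{lower bound} is $m=5$, $n=7$), bounds $\sqrt{A(k,6)}$ from above using an explicit formula and monotonicity in $q$ where $2k=6q+r$, concludes for all $k\geq 9$ (and trivially for $k=2,3$), and settles $k=4,\dots,8$ by direct computation — which is where the exceptional values $4,5,7$ emerge. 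Your explicit determination of $A_{R_{2k}}(n)$ (the ``greedy'' circumscribed polygons, all but at most one side containing a side of $R_{2k}$) does match the paper's Lemmas~\ref{lem:minimumarea} and \ref{lem:regmain} in spirit, and that part would be usable; but as structured, your argument proves only the already-known Theorem~\ref{thm:suboptimal} and cannot reach the honeycomb property. Also note that exhibiting failure for $k=4,5,7$ is not needed for Theorem~\ref{thm:regularmain} itself (that is the ``only if'' half of Theorem~\ref{thm:regularDowker}).
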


Note that if the unit disk $M$ of $\M$ is a regular $(2k)$-gon, then $M_{iso}$ is also a regular $(2k)$-gon. Thus,
Theorem~\ref{thm:regularmain} readily follows from combining Theorems~\ref{thm:regularDowker} and \ref{thm:generalized}.

\begin{theorem}\label{thm:regularDowker}
A regular $(2k)$-gon $P_k$, with $k \geq 2$, satisfies the weak $\frac{1}{2}$-Dowker property if and only if $k \neq 4,5,7$.
\end{theorem}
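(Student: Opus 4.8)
The plan is to reduce Theorem~\ref{thm:regularDowker} to a finite computation plus one monotonicity argument. By Remark~\ref{rem:checkingDowker}, for a regular $(2k)$-gon $P_k$ the sequence $A_{P_k}(n)$ stabilizes at $\area(P_k)_{iso}$ once $n\geq 2k$, so the weak $\tfrac12$-Dowker inequality~\eqref{eq:weakalphadowker} only needs to be checked for $m\in\{3,4,5\}$ and $6<n\leq 2k$; for $n>2k$ the left-hand side only increases (since $A_{P_k}^{1/2}(n)$ is non-decreasing and constant past $2k$), so those cases are subsumed. Hence everything hinges on understanding $A_{P_k}(n)$ for $3\le n\le 2k$.

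The first substantive step is to obtain a usable closed form (or at least a clean formula) for $A_{P_k}(n)$, the minimal area of a convex $n$-gon circumscribed about a regular $(2k)$-gon. For $n$ dividing $2k$ (in particular $n=3$ when $3\mid 2k$, $n=4$ when $4\mid 2k$, $n=6$ always when $3\mid k$, etc.) the optimal circumscribed polygon is itself regular or close to it, and one gets an explicit trigonometric expression. For general $n$ one can appeal to Dowker's theorem (Theorem~\ref{thm:Dowker1}) together with the fact that an optimal circumscribed $n$-gon can be taken with sides supported on lines through the edges of $P_k$ — so the computation becomes an optimization over which of the $2k$ edge-lines to use, and the minimum is attained at an ``equitable'' choice. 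I would set $A_{P_k}(n)$ up as a function of $n$ and $k$ via the support-line parametrization, using the symmetry to argue the optimum distributes the ``skipped'' edges as evenly as possible, giving a formula piecewise in $n \bmod$ (small divisors of $2k$). With $\area(P_k)$ normalized conveniently (say $P_k$ inscribed in the unit circle, or circumscribed about it — the latter is cleaner since then each circumscribing polygon's area is literally a sum of triangle areas over chosen tangent lines), one reduces to a concrete inequality in $k$.

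The second step is the verification itself. For the ``if'' direction ($k\neq 4,5,7$), I expect to split into small cases $k\le$ some bound (say $k\le 12$), checked directly by the formula / the algorithm of Subsection~\ref{subsec:algorithm}, and a tail $k$ large, handled by an asymptotic estimate: as $k\to\infty$, $P_k$ approximates $\BB^2$, $A_{P_k}(n)\to n\tan\tfrac{\pi}{n}$, and $\BB^2$ satisfies even the log-Dowker property (hence strictly the weak $\tfrac12$-Dowker property with room to spare at $m=3,4,5$, $n=7,8,\dots$), so by continuity of $A_{P_k}(n)$ in the shape the inequality persists for all large $k$ with a uniform gap. The delicate point in the tail is that for $n$ close to $2k$ the approximation $A_{P_k}(n)\approx n\tan\tfrac\pi n$ degrades precisely where $A_{P_k}(n)$ is near its ceiling $A_{P_k}(2k)$; there one instead uses the exact value $A_{P_k}(n)=A_{P_k}(2k)=\area(P_k)$ for $n\geq 2k$ and a crude lower bound for $2k-$ (small) $\le n< 2k$, which suffices because the left side of~\eqref{eq:weakalphadowker} is then very large. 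For the ``only if'' direction, for each $k\in\{4,5,7\}$ I exhibit the explicit triple $m\in\{3,4,5\}$, $n$ for which~\eqref{eq:weakalphadowker} fails: compute $A_{P_k}(3), A_{P_k}(4), A_{P_k}(5), A_{P_k}(6)$ and the relevant $A_{P_k}(n)$ from the closed form and check the inequality is violated — this is a finite, fully explicit calculation.

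The main obstacle I anticipate is pinning down $A_{P_k}(n)$ cleanly for all intermediate $n$, i.e. proving that the evenly-distributed circumscribed $n$-gon is optimal and extracting a formula tractable enough to run the global inequality; Dowker-type exchange/convexity arguments make the optimality plausible but the bookkeeping over residues of $2k$ modulo the small values $3,4,5,6$ is where errors creep in, and it is also where the three exceptional values $k=4,5,7$ must emerge naturally rather than by accident. A secondary obstacle is making the ``large $k$'' asymptotic rigorous with an explicit threshold, so that the remaining finite range is genuinely finite and checkable by the algorithm of Subsection~\ref{subsec:algorithm}; I would aim to make the gap in the Euclidean limiting inequality explicit (using strict convexity of $n\mapsto\tfrac12\log(n\tan\tfrac\pi n)$) and then bound the perturbation $|A_{P_k}(n)-n\tan\tfrac\pi n|$ by $O(1/k^2)$ uniformly in $n$, which closes the argument.
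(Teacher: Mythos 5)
Your overall route coincides with the paper's: reduce to $6<n\le 2k$ (as in Remark~\ref{rem:checkingDowker}; note your parenthetical justification is mis-stated, since $A_{P_k}(n)$ is non-increasing in $n$, but the reduction itself is sound), obtain an explicit formula for $A_{P_k}(n)$ by distributing the touched edges of $P_k$ as evenly as possible (convexity of $\tan$), dispose of all large $k$ by comparison with the circle, and finish the remaining finitely many $k$ by direct computation, exhibiting violating pairs for $k=4,5,7$ --- which indeed occur at $(m,n)=(5,7)$. The paper's tail argument is in fact simpler than your proposed perturbation estimate: on the left-hand side of \eqref{eq:weakalphadowker} only a lower bound is needed, and $\BB^2\subseteq P_k$ gives $A_{P_k}(n)\ge n\tan\frac\pi n$ for every $n$ at once, while the explicit formula for $A(k,6)$ (monotone in $q$ where $2k=6q+r$) bounds the right-hand side; this settles all $k\ge 9$, and $k=2,3$ are trivial, leaving $k=4,\dots,8$ to a finite check.

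The step that would fail as you state it is the claim that ``an optimal circumscribed $n$-gon can be taken with sides supported on lines through the edges of $P_k$.'' This is false for $n=3$: minimality forces the midpoint of each side of the circumscribed polygon to lie on $P_k$ (Lemma~\ref{lem:minimumarea}), and the minimum-area triangle generically touches $P_k$ only at vertices, which is why Lemma~\ref{lem:regmain} derives the separate closed form \eqref{eq:reg2} for $A(k,3)$, strictly smaller than the edge-line value; e.g.\ for $k=4$ one gets $A(4,3)=\frac{2}{\cos^2(\pi/8)}\bigl(1+\sqrt2\bigr)\approx 5.657$ rather than $1+2\tan\frac{3\pi}{8}\approx 5.828$ from \eqref{eq:reg1} with $n=3$. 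Overestimating $A(k,3)$ inflates the left-hand side of the $m=3$ instances of \eqref{eq:weakalphadowker}, so your finite verification could certify those instances spuriously; here this happens not to change the verdict (the binding pairs sit at $m=5$), but the closed form --- the core of both your finite check and the ``only if'' direction --- would be wrongly derived. Moreover, even for odd $n\ge5$ the edge-line reduction is not a citable fact: it requires the exchange argument of Lemma~\ref{lem:minimumarea} plus the additional analysis ruling out a single vertex-only side (Subcase 1.2 in the proof of Lemma~\ref{lem:regmain}); for even $n$ it follows from the symmetric Dowker theorem (Theorem~\ref{thm:Dowker2} and Remark~\ref{rem:osymmetricP}). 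With the $n=3$ case treated separately and the odd-$n$ reduction actually proved, your plan matches the paper's proof.
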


Consider a normed plane $\M$ whose unit disk is a regular $(2k)$-gon with $k=4,5$ or $7$. A possible interpretation of Theorem~\ref{thm:regularDowker} is that in this case to minimize the `average' perimeter it is not worth using `optimal' hexagons. Nevertheless, it seems unlikely that one can fit `optimal' $k$-gons with more than one different values of $k$ to tile the plane. This observation might indicate that these planes may also satisfy the honeycomb property.

\begin{remark}\label{rem:onlyweak}
According to Lemma~\ref{lem:regmain}, if $k \geq 4$, then $A_{2k-2}(P_k)$ is a convex combination of $A_{2k-1}(P_k)$ and $A_{2k-3}(P_k)$, implying that in this case $P_k$ does not satisfy the $\alpha$-Dowker property for any $\alpha < 1$.
\end{remark}

To prove Theorem~\ref{thm:regularDowker}, we first find the values of $A_{n}(P_k)$ of the regular $(2k)$-gon $P_k$, with $k \geq 2$ and $3 \leq n \leq 2k$. We note also that for any $n \geq 2k$, we have $A_n(P_k) = \area(P_k)$. 
In the following we assume that the regular polygon $P_k$ is circumscribed about the unit disk $\BB^2$. We denote the quantity $A_n(P_k)$ by $A(k,n)$.
For our investigation, we need Lemma~\ref{lem:minimumarea}, which is a slightly stronger version of Lemma 1 from \cite{ACY85}.

\begin{lemma}\label{lem:minimumarea}
Let $3 \leq n \leq m$. Let $P$ be a convex $m$-gon, and let $Q$ denote a minimum area convex $n$-gon circumscribed about $P$. Then the midpoint of every side of $Q$ belongs to $P$. Furthermore, if $n \geq 4$, then there is a minimum area convex $n$-gon $Q'$ such that there is at most one side of $Q'$ that does not contain a side of $P$, and if $S$ is such a side of $Q'$, then the sum of the angles of $Q'$ on $S$ is strictly less than $\pi$.
\end{lemma}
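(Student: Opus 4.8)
The plan is to follow the structure of the classical argument of Aggarwal, Chang and Yap, but to extract the extra information we need about the midpoint of each side and about the single ``free'' side. First I would fix a minimum area convex $n$-gon $Q$ circumscribed about the $m$-gon $P$; such a $Q$ exists by compactness, since circumscribed polygons with bounded number of sides containing $P$ and minimizing area form a compact family (any minimizing sequence has sides whose supporting lines accumulate). I would then argue that each side of $Q$ must touch $P$: if some side $S$ of $Q$ were disjoint from $P$, we could translate $S$ inward (keeping it parallel) until it meets $P$, strictly decreasing the area while keeping the polygon convex and circumscribed, contradicting minimality. So every side of $Q$ contains a point of $P$.

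The key local variational step is the midpoint claim. Consider a side $S$ of $Q$ lying on a supporting line $\ell$, and let $p \in S \cap P$. Rotating $\ell$ about the point $p$ by a small angle (in either direction) keeps $\ell$ a supporting line of $P$ at $p$ if $p$ is in the relative interior of $S\cap P$ is not required — what matters is that the rotated line still supports $P$, which holds for sufficiently small rotations when $p$ is an interior point of the edge $S \cap P$ of $P$ (or, if $S \cap P$ is a single vertex $p$ of $P$, for rotations within the normal cone at $p$). Under such a rotation about $p$, the first-order change of $\area(Q)$ is, by an elementary computation, proportional to $|pa|^2 - |pb|^2$ where $a,b$ are the two endpoints of $S$; minimality forces $|pa| = |pb|$, i.e.\ $p$ is the midpoint of $S$. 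When $S\cap P$ is a segment we may first slide $p$ along it, so we conclude the midpoint of $S$ lies on $P$. Carrying this out for every side gives the first assertion. I would present the first-order area formula carefully, since this is where the precise statement ``midpoint belongs to $P$'' comes from, and it is the technical heart of the lemma.

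For the second assertion (when $n \geq 4$), I would argue that among all minimum area circumscribed $n$-gons we can choose one, call it $Q'$, that contains as many sides of $P$ on its boundary as possible. I claim $Q'$ has at most one side not containing a side of $P$. Suppose $Q'$ had two such sides $S_1, S_2$. Using the midpoint property, each $S_i$ touches $P$ at the midpoint $p_i$, which is then a vertex of $P$, and one can rotate the supporting line of $S_i$ about $p_i$ within the normal cone of $P$ at $p_i$ without increasing the area (since, by the first-order computation and the midpoint property, the derivative vanishes) — and then continue rotating until the side comes to contain a full side of $P$; a second-order / convexity argument shows this does not increase the area, contradicting the maximality of the number of $P$-sides on $\bd Q'$ unless such a motion is obstructed, and with two free sides available one always finds an admissible motion. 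Finally, if $S$ is the unique free side of $Q'$ with adjacent vertices $a,b$: the two sides of $Q'$ meeting $S$ at $a$ and at $b$ each contain a side of $P$ and hence support $P$ along a segment; since $P$ lies strictly on one side and has nonempty interior, the interior normal directions at those two neighbouring sides cannot be anti-parallel or such that the angles at $a$ and $b$ sum to $\pi$ or more, for then $Q'$ would be unbounded or $S$ forced to miss the interior of $P$, contradicting that $S$ touches $P$ at its midpoint. Hence the angle sum on $S$ is strictly less than $\pi$.

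The main obstacle I anticipate is making the ``rotate the free side until it absorbs a side of $P$, without increasing area'' step fully rigorous: the area is not monotone along such a rotation in general, so one has to combine the vanishing of the first derivative (from the midpoint property) with a careful analysis of the sign of the area change over the whole rotation interval, possibly switching the pivot vertex of $P$ as the contact point changes. Handling the degenerate configurations — when $S \cap P$ is a vertex versus an edge of $P$, and when two free sides are ``linked'' so that moving one affects the feasibility of the other — is the delicate part, and I would organize the proof so that the maximality-of-$P$-sides choice of $Q'$ does most of the bookkeeping.
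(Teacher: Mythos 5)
Your treatment of the first assertion is essentially the paper's argument and is fine: every side of $Q$ meets $P$, and rotating the sideline of a side $S$ about a contact point $p$ changes the area at first order by $\tfrac{\theta}{2}\left(|pa|^2-|pb|^2\right)$, so minimality forces the midpoint into $P\cap S$ (the paper handles the one-sided cases you gloss over with ``slide $p$ along it'' by rotating about the point of $P\cap S$ nearest the midpoint, in the admissible direction).

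The second assertion is where your proposal has a genuine gap. Your central move --- rotate a free side about its contact vertex ``without increasing the area'' until it comes to contain a side of $P$ --- fails in exactly the generic case. For a rotation by $\theta$ of the sideline of a free side about its midpoint, the first-order term vanishes, but the second-order change of area is $\tfrac{h^2}{2}\left(\cot\alpha+\cot\beta\right)\theta^2$, where $2h$ is the length of the side and $\alpha,\beta$ are the two angles of $Q'$ on it. Minimality therefore forces $\alpha+\beta\le\pi$ on every free side, and whenever $\alpha+\beta<\pi$ every nonzero admissible rotation \emph{strictly increases} the area, so the side can never be rotated into a flush position without increasing the area; the rotation-to-flush trick works only in the borderline case $\alpha+\beta=\pi$, where the area is exactly constant along the rotation (this is precisely how the paper uses it, to decrease the number of non-flush sides for an extremal minimizer). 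Your argument for the final claim is also not valid: a bounded circumscribed polygon can perfectly well have a side that touches $P$ only at its midpoint while its two angles sum to $\pi$ or more (a square circumscribed about the inscribed ``diamond'' quadrilateral does this on every side), so no unboundedness or feasibility obstruction arises; the true reason $\alpha+\beta<\pi$ holds is variational: $\alpha+\beta>\pi$ would make the rotation about the midpoint strictly decrease the area, contradicting minimality, and $\alpha+\beta=\pi$ is excluded by the extremal choice of $Q'$ via the area-preserving rotation. Finally, the step that actually bounds the number of free sides is a counting argument absent from your sketch: since the angles of a convex $n$-gon sum to $(n-2)\pi$ and each angle is less than $\pi$, two vertex-disjoint sides cannot both have angle sums below $\pi$; this observation (stated in the paper as ``at most one side satisfies this property'') is what replaces your rotation-to-flush argument, and any residual case of two such sides sharing a vertex still has to be addressed by the variational analysis rather than by feasibility considerations.
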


\begin{proof}
Clearly, every side of $Q$ intersects $P$.
Suppose that there is a side $S$ of $Q$ such that the midpoint of $S$ does not belong to $P$. Let $q$ be the point of $P \cap S$ closest to the midpoint of $S$. Then slightly rotating the sideline of $S$ around $q$ in a suitable direction yields a circumscribed polygon whose area is strictly smaller than $A_n(P)$, a contradiction (see Figure~\ref{fig:circumscribed}).

\begin{figure}[ht]
\begin{center}
\includegraphics[width=0.75\textwidth]{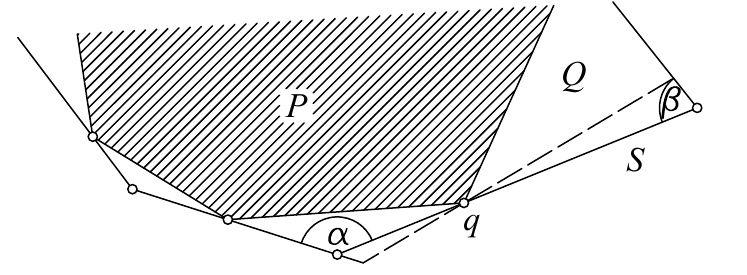}
\caption{Slightly rotating the side $S$ of $Q$ around $q$. The rotated copy is shown with a dashed line. Note that if the longer part of $S$ moves towards $P$, then the area of $Q$ decreases.}
\label{fig:circumscribed}
\end{center}
\end{figure}

Now, let $Q'$ be a minimum area convex $n$-gon circumscribed about $P$ with the property that it has the smallest number of sides that contain only a vertex of $P$. Let $S$ be such a side of $Q'$. Then, by the previous paragraph, the vertex $p$ of $P$ on $S$ is the midpoint of $S$. Let $\alpha$ and $\beta$ denote the two angles of $Q'$ on $S$. If $\alpha + \beta > \pi$, then slightly rotating the sideline of $Q'$ through $S$ about $p$ decreases the area of $Q'$, contradicting the assumption that $Q'$ is a minimum area circumscribed $n$-gon. If $\alpha + \beta = \pi$, then the same rotation does not change the area of $Q'$, and thus, rotating this line until it first reaches another vertex of $P$ we obtain a minimum area circumscribed polygon having strictly less sides than $Q'$ that contain only a vertex of $P$. Thus, we have $\alpha + \beta < \pi$. But any convex $n$-gon with $n \geq 4$ has at most one side satisfying this property.
\end{proof}

\begin{remark}\label{rem:osymmetricP}
We note that if $P$ is $o$-symmetric and $n$ is even, then the method of the proof of Lemma~\ref{lem:minimumarea}, combined with Theorem~\ref{thm:Dowker2}, yields that there is a minimum area convex $n$-gon $Q$ circumscribed about $P$ such that $Q$ is $o$-symmetric, and every side of $Q$ contains a side of $P$.
\end{remark}

The main lemma to prove Theorem~\ref{thm:regularDowker} is Lemma~\ref{lem:regmain}.

\begin{lemma}\label{lem:regmain}
Let $k \geq 2$ and $3 \leq n \leq 2k$ be positive integers, and let $2k=nq+r$, where $q, r$ are nonnegative integers, and $r < n$. If $n \geq 4$, then
\begin{equation}\label{eq:reg1}
A(k,n)= (n-r) \tan \frac{\pi q}{2k} + r \tan \frac{\pi (q+1)}{2k}.
\end{equation}
Furthermore,
\begin{equation}\label{eq:reg2}
A(k,3)=
\begin{cases}
        3 \tan \frac{\pi}{3}, & \text{if}~2k=3q,\\
        \frac{2}{\cos^2\left( \frac{\pi}{2k} \right)}\left( 2\sin\left( \frac{q\pi}{k} \right)+\sin\left( \frac{(q+1)\pi}{k} \right) \right), & \text{if}~2k=3q+1,\\
        \frac{2}{\cos^2\left( \frac{\pi}{2k} \right)}\left( \sin\left( \frac{q\pi}{k} \right)+2 \sin\left( \frac{(q+1)\pi}{k} \right) \right), & \text{if}~2k=3q+2.
    \end{cases}
\end{equation}
\end{lemma}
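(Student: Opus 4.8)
The plan is to determine, for a regular $(2k)$-gon $P_k$ circumscribed about $\BB^2$, the minimum area of a convex $n$-gon circumscribed about it, by exploiting the rotational symmetry together with the structural information provided by Lemma~\ref{lem:minimumarea} and Remark~\ref{rem:osymmetricP}. The key reduction is this: since $P_k$ has $2k$ sides, each supported by a tangent line to $\BB^2$ touching at one of the $2k$ equally spaced points $u_j = (\cos\frac{j\pi}{k},\sin\frac{j\pi}{k})$, any circumscribed polygon that has all its sides containing a side of $P_k$ is determined by choosing a cyclic subset of these $2k$ tangent lines. A consecutive pair of chosen tangent lines, touching at $u_i$ and $u_j$ separated by an arc of $t$ steps (i.e. central angle $\frac{t\pi}{k}$), meet at a vertex whose associated triangle (cut off from the tangent polygon) has area $\tan\frac{t\pi}{2k}$ — this is the elementary computation behind $A_{\BB^2}(n)=n\tan\frac{\pi}{n}$. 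So the area of such a circumscribed polygon is $\sum \tan\frac{t_\ell \pi}{2k}$ where the positive integers $t_\ell$ sum to $2k$; minimizing the area over $n$-gons of this type amounts to minimizing $\sum_{\ell=1}^n \tan\frac{t_\ell\pi}{2k}$ subject to $\sum t_\ell = 2k$, $t_\ell \geq 1$ integers.

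For $n \geq 4$ I would argue as follows. First I would invoke Lemma~\ref{lem:minimumarea} to say that a minimum area circumscribed $n$-gon $Q'$ may be assumed to have at most one side $S$ not containing a side of $P_k$, with the angle sum on $S$ strictly less than $\pi$. I then want to rule out such an exceptional side entirely, reducing to the combinatorial optimization above. One way: if $S$ touches $P_k$ only at a vertex $p=u_j$ which is the midpoint of $S$, then $S$ lies along the tangent line at $u_j$ anyway (since $P_k\subset\BB^2$'s circumscribed polygon and the supporting line of $Q'$ at the boundary point $u_j$ of $\BB^2$ — wait, $u_j$ is a vertex of $P_k$ but an interior point of $\BB^2$'s... no, $u_j\in\partial\BB^2$). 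Actually $u_j\in\bd\BB^2$, so the side $S$ through $u_j$, being a supporting line of the convex set $P_k\subset\BB^2$... is not forced to be tangent to $\BB^2$; but I can still replace $S$ by the tangent line to $\BB^2$ at $u_j$, which supports $P_k$ (as $u_j$ is a vertex of $P_k$ on this line) and only shrinks $Q'$. Hence WLOG every side of $Q'$ is one of the $2k$ tangent lines, and the problem becomes the integer optimization of $\sum\tan\frac{t_\ell\pi}{2k}$. By strict convexity of $t\mapsto\tan\frac{t\pi}{2k}$ on $[0,k)$, the minimizing partition of $2k$ into $n$ positive integers is the balanced one: $r$ parts equal to $q+1$ and $n-r$ parts equal to $q$, where $2k=nq+r$, $0\le r<n$. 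This yields \eqref{eq:reg1}. (A small point to check: we need $q+1 \le k$ so that the tangent values are finite and the triangles are genuine, i.e. no $t_\ell\ge k$; this holds because $n\ge 4$ forces $q = \lfloor 2k/n\rfloor \le k/2 < k$.)

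For $n=3$ the exceptional side in Lemma~\ref{lem:minimumarea} genuinely can occur and must be analyzed by hand, which is why \eqref{eq:reg2} splits into three cases according to $2k \bmod 3$. When $3 \mid 2k$, one checks that the regular triangle circumscribed about $\BB^2$ (all three sides tangent, touching points evenly spaced) is optimal, giving $3\tan\frac\pi3$. When $2k \equiv 1$ or $2$ mod $3$, the balanced choice of touching points is impossible with all three sides tangent, and the optimal triangle has two sides containing sides of $P_k$ and one "free" side $S$ supported at a vertex $u_j$ of $P_k$ but not tangent to $\BB^2$; here $S$ is the chord of... no — $S$ passes through a vertex of $P_k$ and is determined by requiring it to close up the triangle with the two chosen tangent lines. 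I would parametrize the two tangent lines by the arc-lengths they subtend and the vertex $u_j$ lying between them, compute the area of the resulting triangle as a function of these discrete choices, and minimize. The stated formulas, with the $\frac{1}{\cos^2(\pi/2k)}$ factor and sines of multiples of $\pi/k$, are exactly what such a triangle-area computation produces (the $\cos^{-2}$ comes from the vertex of $P_k$ sitting at distance $\sec\frac\pi{2k}$ from $o$). The main obstacle is precisely this $n=3$ case: one must correctly identify which configuration (how many tangent sides, where the free side's contact vertex sits relative to the two tangent contact points) is optimal in each residue class, and carry the trigonometry through carefully; the $n\ge4$ case, once the reduction to tangent-line polygons is justified, is a clean convexity argument.
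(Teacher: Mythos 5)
There is a genuine gap in your reduction for $n \geq 4$. You conflate the tangency points of $\BB^2$ (which are the \emph{midpoints} of the sides of $P_k$, since $P_k$ is circumscribed about $\BB^2$) with the \emph{vertices} of $P_k$, which lie strictly outside $\BB^2$, at distance $\sec\frac{\pi}{2k}>1$ from $o$. If the exceptional side $S$ of a minimal $n$-gon touches $P_k$ only at a vertex $p$ of $P_k$, then $p\notin\bd\BB^2$, so ``the tangent line to $\BB^2$ at $p$'' does not exist, and the two natural repairs both fail: translating the line of $S$ inward until it is tangent to $\BB^2$ cuts off the vertex $p$, so the resulting polygon is no longer circumscribed about $P_k$; and rotating the line of $S$ about $p$ strictly \emph{increases} the area, because Lemma~\ref{lem:minimumarea} guarantees the two angles on $S$ sum to less than $\pi$, which makes this configuration a local minimum under such rotations. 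Hence your claim ``WLOG every side of $Q'$ is one of the $2k$ tangent lines'' is unproved, and it is exactly the hard point: the paper devotes Subcase 1.2 of its proof to it, translating the line of $S$ inward (accepting that circumscription about $P_k$ is lost, only tangency to $\BB^2$ is kept), writing the area as $\sum\tan\alpha_i$ with $\alpha_1+\alpha_2>\frac{\pi}{2}$, and then redistributing the angles into integer multiples of $\frac{\pi}{2k}$ via convexity of $\tan$ to produce a genuine circumscribed $n$-gon of strictly smaller area, contradicting minimality. Once that reduction is in place, your balanced-partition convexity argument and the computation $\sum\tan\frac{t_\ell\pi}{2k}$ with $\sum t_\ell=2k$ coincide with the paper's Subcase 1.1 and do give \eqref{eq:reg1}.

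The $n=3$ case is also not proved but only planned: you defer precisely the trigonometric minimization that constitutes \eqref{eq:reg2}, and the configuration you guess to be optimal for $2k\not\equiv 0 \pmod 3$ (``two sides containing sides of $P_k$ and one free side'') is asserted rather than derived. In the paper the formula in \eqref{eq:reg2} for $r=1,2$ arises from the midtriangle analysis: by Lemma~\ref{lem:minimumarea} the contact points are midpoints of the sides of $Q$, so they span the midtriangle of $Q$, whose sides are parallel to those of $Q$; the paper then runs through the four subcases according to how many sides of $Q$ contain a side of $P_k$, shows that all of them except the all-tangent one reduce to contact points at (nearly equally spaced) vertices of $P_k$, computes the area as four times that of a triangle inscribed in the circle of radius $\sec\frac{\pi}{2k}$ (whence the factor $\frac{1}{\cos^2(\pi/2k)}$ you anticipated), and finally observes that the all-tangent equilateral configuration exists only when $3\mid 2k$ and then beats the vertex configuration, giving $3\tan\frac{\pi}{3}$. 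To complete your proof you would need to supply both the global exclusion argument for the exceptional side when $n\geq 4$ and this case analysis for $n=3$.
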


\begin{proof}
Let $Q$ be a minimum area $n$-gon circumscribed about $P_k$ with the smallest number of sides that contain only a vertex of $P_k$. By Lemma~\ref{lem:minimumarea} and Remark~\ref{rem:osymmetricP}, we can assume that if $n$ is even, then every side of $Q$ contains a side of $P_k$, and otherwise at most one side of $Q$ contains only a vertex of $P_k$.
Let the sides of $P_k$ be denoted as $S_1, S_2, \ldots, S_{2k}$ in counterclockwise order in $\bd(P_k)$.

We distinguish two cases.

\noindent
\emph{Case 1}, $n \geq 4$.

Set $B(k,n)= (n-r) \tan \frac{\pi q}{2k} + r \tan \frac{\pi (q+1)}{2k}$. We intend to show that $A(k,n)=B(k,n)$ for every choice of $k,n$.

\noindent
\emph{Subcase 1.1}: every side of $Q$ contains a side of $P_k$.
Without loss of generality, we may assume that the indices of the sides of $P_k$ contained in a side of $Q$ are of the form $s_1, s_1+s_2, \ldots, s_1+s_2+\ldots + s_n=2k$ for some positive integers $s_i$, where $s_i < k$ for every value of $i$. Since $P_k$ is circumscribed about $\BB^2$, we have
\[
\area(Q) = \sum_{i=1}^n \tan \frac{s_i \pi}{2k}.
\]
Note that as the function $x \mapsto \tan x$ is strictly convex on the domain $\left( 0, \frac{\pi}{2} \right)$, it follows that $\area(Q) \geq B(k,n)$, and equality is attained with a suitable choice of the $s_i$.

\noindent
\emph{Subcase 1.2}, there is a unique side of $Q$ that contains only a vertex of $P_k$. Observe that by our assumptions, in this case $n\geq 5$ is odd. By Lemma \ref{lem:minimumarea}, it follows that the sum of the angles $\alpha, \beta$ of $Q$ on the above side of $Q$ satisfies $\alpha + \beta < \pi$, and thus, at least one half of the sides of $P_k$ do not lie in $\bd(Q)$, implying that $k \geq n \geq 5$.
Let the sides of $Q$ be denoted by $E_1, E_2, \ldots, E_n$ in counterclockwise order. For $i=2,\ldots, n$, let $2\alpha_i$ denote the turning angle of $Q$ at the common vertex of $E_{i-1}$ and $E_i$, and let $2\alpha_1$ denote the turning angle at the common vertex of $E_n$ and $E_1$. Without loss of generality, let $E_1$ be the side of $Q$ that contains only a vertex of $P_k$. Then, for any $i$ with $3 \leq i \leq n$, $\alpha_i$ is an integer multiple of $\frac{\pi}{2k}$, and $\sum_{i=1}^n \alpha_i = \pi$.
Since $P_k$ is circumscribed about $\BB^2$, the distance of the line of $E_1$ from $o$ is strictly greater than $1$.
Let us translate the line of $E_1$ towards $o$ until its distance is $1$, and let $Q'$ denote the convex $n$-gon obtained in this way (see Figure~\ref{fig:regpol}). Then
\[
\area(Q) > \area(Q') = \sum_{i=_1}^n \tan \alpha_i.
\]
By Lemma~\ref{lem:minimumarea}, we have that $\alpha_1 + \alpha_2 > \frac{\pi}{2}$. Since this quantity is an integer multiple of $\frac{\pi}{2k}$, it follows that $\alpha_1 + \alpha_2 \geq \frac{\pi}{2}+\frac{s\pi}{2k}$ for some positive integer $s$. Without loss of generality, we assume that $\alpha_1 \leq \alpha_2$.

\begin{figure}[ht]
\begin{center}
\includegraphics[width=0.75\textwidth]{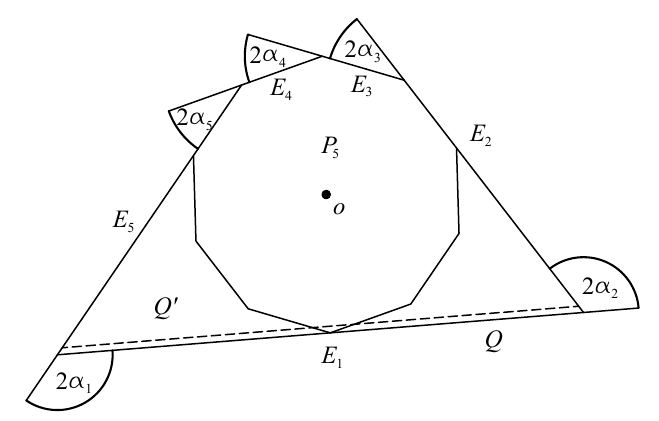}
\caption{An illustration for Case 2 of the proof of Lemma~\ref{lem:regmain} with $n=k=5$.}
\label{fig:regpol}
\end{center}
\end{figure}

We intend to construct a convex polygon circumscribed about $P_k$ with area smaller than $\area(Q')$. To do it, we assume that $s=1$; for the case $s\geq 2$ a straightforward modification of our argument can be given. First, note that as $E_1$ contains no side of $P_k$, we have that $\alpha_2 - \alpha_1  \neq \frac{\pi}{2k}$.
Consider the case that $\alpha_2-\alpha_1 > \frac{\pi}{2k}$, then there are integer multiples $\alpha_2' \geq \alpha_1'$ of $\frac{\pi}{2k}$ such that $\alpha_1'+\alpha_2'=\alpha_1+\alpha_2$. Then, by the convexity of the $x \mapsto \tan x$ function the expression $\tan \alpha_1' + \tan \alpha_2' + \sum_{i=_3}^n \tan \alpha_i$ is the area of a convex $n$-gon $Q''$ circumscribed about $P_k$, and $\area(Q'') < \area(Q') < \area(Q)$, contradicting our assumption about the minimality of the area of $Q$.

Assume that $\alpha_2-\alpha_1 < \frac{\pi}{2k}$. Then $\frac{\pi}{4} < \alpha_1 \leq \frac{\pi}{4}+\frac{\pi}{4k} \leq  \alpha_2 < \frac{\pi}{4}+\frac{\pi}{2k}$. Since $\sum_{i=3}^n \alpha_i = \frac{\pi}{2} - \frac{\pi}{2k}$ and $5 \leq n \leq k$, for at least one value of $i$, with $3 \leq i \leq n$, we have
\[
\alpha_i \leq \frac{1}{n-2} \left( \frac{\pi}{2} - \frac{\pi}{2k} \right) \leq \frac{\pi}{6}- \frac{\pi}{6k}.
\]
Without loss of generality, let $\alpha_3$ satisfy this inequality. Then, as $k \geq 5$, $\alpha_1-\alpha_3 > \frac{\pi}{4} - \left( \frac{\pi}{6}- \frac{\pi}{6k} \right) > \frac{\pi}{2k}$. Thus, setting $\alpha_1'= \alpha_1 - \frac{\pi}{2k}$, $\alpha_3' = \alpha_3 + \frac{\pi}{2k}$, by the convexity of the $x \mapsto \tan x$ function, we have that $\sum_{i=1}^n \tan \alpha_i > \tan \alpha_1' + \tan \alpha_2+ \tan \alpha_3' + \sum_{i=4}^n \tan \alpha_i$. Then, applying the argument in the previous case we obtain a convex $n$-gon $Q''$ circumscribed about $P_k$ such that $\area(Q'') < \area(Q')$, a contradiction.

This shows that $A(k,n)=B(k,n)$ for any $k \geq 2$ and $4 \leq n \leq 2k$.

\noindent
\emph{Case 2}, $n = 3$.

Let $Q$ be a minimum area triangle circumscribed about $P_k$. 
By Lemma~\ref{lem:minimumarea}, the midpoint of every side of $Q$ belongs to $P$. We denote the vertices of $P_k$ by $p_1, p_2, \ldots, p_{2k}$ in counterclockwise direction.
Depending on the number of sides of $Q$ that contain only a vertex of $P_k$, we distinguish four subcases.

\noindent
\emph{Subcase 2.1}, every side of $Q$ contains only a vertex of $P_k$. Let these vertices of $P_k$ be $p_i, p_j, p_l$, where we can assume that $1 \leq i < j < l \leq 2k$. Since these vertices are the midpoints of the corresponding sides of $Q$, $\conv \{ p_i, p_j, p_l \}$ is the midtriangle of $Q$ and hence, the sides of this triangle are parallel to the sides of $Q$. Since $Q$ is circumscribed about $P_k$, its sidelines support $P_k$ at the corresponding vertices of $P_k$. Characterizing which diagonals of $P_k$ are parallel to some supporting lines at $p_i$, $p_j$ and $p_k$, respectively, we obtain that $j-i, l-j$ and $i+2k-l$ differ by at most one. Thus, by an elementary computation we have that if $Q$ is a minimal area triangle circumscribed about $P_k$, and $\bd (Q)$ contains no side of $T$, then, with the notation $2k=3q+r$, where $q,r$ are nonnegative integers and $r \leq 2$,
%\begin{equation}
%    \area(Q_3)=
%    \begin{cases}
%        \frac{6\sin(2\pi/3)}{\cos^2 (\pi/n)}, & \text{if}~n=3k\\
%        \frac{2}{\cos^2(\pi/n)}\big(2\sin(2k\pi/n)+\sin(2(k+1)\pi/n)\big), & \text{if}~n=3k+1\\
%        \frac{2}{\cos^2(\pi/n)}\big(\sin(2k\pi/n)+2\sin(2(k+1)\pi/n)\big), & \text{if}~n=3k+2
%    \end{cases}
%\end{equation}
%Which implies for $n=3k+r$: 
\begin{equation}\label{eq:guess}
\area(Q)=\frac{2}{\cos^2 \left( \frac{\pi}{2k}\right) } \left( (3-r)\sin \left( \frac{q\pi}{k} \right) + r \sin \left( \frac{(q+1)\pi}{k} \right) \right). 
\end{equation}

\noindent
\emph{Subcase 2.2}, exactly one side of $Q$ contains a side of $P_k$. Let the sides of $Q$ be $E_1, E_2, E_3$, and assume that $[p_{2k},p_1] \subset E_1$, and $p_i \in E_2$ and $p_j \in E_3$. Since the latter two vertices are midpoints of $E_2$ and $E_3$, respectively, we have that $[p_i,p_j]$ is parallel to $[p_{2k},p_1]$, implying that $j=2k+1-i$. Let $q$ denote the midpoint of $E_1$, and recall that $q \in [p_{2k},p_1]$. Note that $\conv \{ q, p_i, p_j \}$ is the midtriangle of $Q$, and thus, the area of $Q$ does not depend on the position of $q$. This yields that as $\conv \{ q, p_i, p_j \}$ is the midtriangle of a triangle circumscribed about $P_k$, then, moving $q$ along the side $[p_1,p_{2k}$, we obtain that $\conv \{ p_1, p_i, p_j \}$ or $\conv \{ p_{2k}, p_i, p_j \}$ is also the midtriangle of a triangle circumscribed about $P_k$, and the area of this triangle is equal to $\area(Q)$. Thus, we may apply the argument from Subcase 2.1 and obtain the same quantities for $\area(Q)$ as in (\ref{eq:guess}).

\noindent
\emph{Subcase 2.3}, exactly two sides of $Q$ contain a side of $P_k$. We apply the approach of the previous cases, and let $\conv \{ p,q,r\}$ be the midtriangle of $Q$, where $p$ is a vertex of $P_k$. Then, like in Subcase 2.2, we can observe that both $[p,q]$ and $[p,r]$ are parallel to some sides of $P_k$. But this yields that both $q,r$ are vertices of $P_k$, and we obtain the estimate in (\ref{eq:guess}).

\noindent
\emph{Subcase 2.4}, every side of $Q$ contains a side of $P_k$. Let $T=\conv \{ p,q,r \}$ denote the midtriangle of $Q$. Since every side of $T$ is parallel to a side of $P_k$, the axial symmetry of $P_k$ with respect to the bisector of its every side and a simple geometric observation shows that $p,q,r$ are the midpoints of some sides of $P_k$. But then $T$ is axially symmetric with respect to the bisector of its every side, implying that $T$ is regular. From this we immediately obtain that $2k$ is divisible by $3$, and that $\area(Q)= 3 \tan \frac{\pi}{3}$.

We need to compare this quantity to the one in the case $r=0$ of (\ref{eq:guess}). 
Since
\[
3 \tan \frac{\pi}{3} = 6 \sin \left( \frac{2\pi}{3} \right) < \frac{ 6\sin  \left( \frac{2\pi}{3} \right) }{ \cos^2 \left( \frac{\pi}{2k} \right)}
\]
for all $k \geq 2$, the second part of Lemma~\ref{lem:regmain} immediately follows.
\end{proof}

Now we prove Theorem~\ref{thm:regularDowker}.

\begin{proof}[Proof of Theorem~\ref{thm:regularDowker}]
We need to determine which values of $k \geq 2$ satisfy the inequality
\begin{equation}\label{eq:regularforproof}
\frac{n-6}{n-m} \sqrt{A(k,m)} + \frac{6-m}{n-m} \sqrt{A(k,n)} \geq \sqrt{A(k,6)}
\end{equation}
for all $3 \leq m < 6 < n$.

Recall that by definition, $\BB^2 \subset P_{k}$ for all values of $k$, implying that $A_n(\BB^2) \leq A_n(P_k)$ for all values of $n$. An elementary computation shows that $A_n(\BB^2) = n \tan \frac{\pi}{n}$ for all $n$. Thus, for any $3 \leq m < 6 < n$, we have
\begin{multline*}
\frac{n-6}{n-m} \sqrt{A(k,m)} + \frac{6-m}{n-m} \sqrt{A(k,n)} \geq \frac{n-6}{n-m} \sqrt{m \tan \frac{\pi}{m}} + \frac{6-m}{n-m} \sqrt{n \tan \frac{\pi}{n}} \geq \\
\geq \frac{1}{2} \sqrt{5 \tan \frac{\pi}{5}} + \frac{1}{2} \sqrt{7 \tan \frac{\pi}{7}}.
\end{multline*}
Furthermore, for any $2k=6q+r$, where $q \geq 1$ and $0 \leq r < 6$,
\[
A(k,6)= (6-r) \tan \frac{\pi q}{2k} + r \tan \frac{\pi (q+1)}{2k}.
\]
Since $(6-r) \frac{\pi q}{2k} + r \frac{\pi (q+1)}{2k} = \pi$ and by the convexity of the function $x \mapsto \tan x$ on $(0,\frac{\pi}{2})$, we have that
for any fixed value of $r$, $A(k,6)$ is a decreasing sequence of $q$. Using this, an elementary computation shows that
\[
\sqrt{A(k,6)} \leq \frac{1}{2} \sqrt{5 \tan \frac{\pi}{5}} + \frac{1}{2} \sqrt{7 \tan \frac{\pi}{7}}
\]
for any $q \geq 3$ and $0 \leq r < 6$. Thus, (\ref{eq:regularforproof}) holds for any $k \geq 9$. We also observe that since $A(k,n) = \area(P_{k})$ for any $n \geq 2k$, (\ref{eq:regularforproof}) clearly holds if $k=2$ or $k=3$. Thus, we are left with the cases $k=4,5,6,7,8$, for which a direct computation of the two sides of (\ref{eq:regularforproof}) for any $3 \leq m < 6 < n \leq 2k$, using a Maple 18.00 software, yields the assertion.
\end{proof}

\section{Results about not necessarily polygonal norms}\label{sec:general}

In this section we collect our results about the $\alpha$-honeycomb properties of general norms, and the $\alpha$-Dowker properties of general convex disks.
For our first result, recall that $A_{\BB^2}(n) = n \tan \frac{\pi}{n}$. In the following theorem, we let
\[
\varepsilon_0 = \frac{\sqrt{A_{\BB^2}(5)}+ \sqrt{A_{\BB^2}(7)}-2\sqrt{A_{\BB^2}(6)}}{\sqrt{A_{\BB^2}(5)}+ \sqrt{A_{\BB^2}(7)}+2\sqrt{A_{\BB^2}(6)}} = 0.002623 \ldots ,
\]
and denote the Hausdorff distance of the convex bodies $K,L$ by $d_H(K,L)$.

\begin{theorem}\label{thm:stability}
Let $\M$ be a normed plane with unit disk $M$, and assume that $d_H(M, \BB^2) \leq \varepsilon_0$. Then $\M$ satisfies the honeycomb property.
\end{theorem}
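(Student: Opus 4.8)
The plan is to deduce Theorem~\ref{thm:stability} from Theorem~\ref{thm:generalized} by showing that whenever $d_H(M,\BB^2) \le \varepsilon_0$, the isoperimetrix $M_{iso}$ satisfies the weak $\tfrac12$-Dowker property, i.e. inequality~\eqref{eq:weakalphadowker} with $\alpha = \tfrac12$ for all $3 \le m < 6 < n$. First I would record how the hypothesis on $M$ controls $M_{iso}$: since polarity and a rotation are involved, $d_H(M,\BB^2) \le \varepsilon_0$ forces $M_{iso}$ to lie between two concentric Euclidean disks of radii $1-c\varepsilon_0$ and $1+c\varepsilon_0$ for an explicit constant $c$ (one can take $c=1$ after normalizing, since for bodies close to $\BB^2$ the polar of a body in $[(1-\varepsilon)\BB^2,(1+\varepsilon)\BB^2]$ lies in $[(1+\varepsilon)^{-1}\BB^2,(1-\varepsilon)^{-1}\BB^2]$). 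By the monotonicity of $A_K(n)$ under inclusion, this yields
\[
(1-c\varepsilon_0)^2\, n\tan\tfrac{\pi}{n} \;\le\; A_{M_{iso}}(n) \;\le\; (1+c\varepsilon_0)^2\, n\tan\tfrac{\pi}{n}
\]
for every $n \ge 3$, because $A_{\lambda \BB^2}(n) = \lambda^2 n\tan\frac{\pi}{n}$.

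Next I would plug these two-sided bounds into~\eqref{eq:weakalphadowker}. For the left-hand side I use the lower bound on $A_{M_{iso}}(m)$ and $A_{M_{iso}}(n)$, and for the right-hand side the upper bound on $A_{M_{iso}}(6)$; writing $a_j := \sqrt{j\tan\frac{\pi}{j}}$ this reduces the desired inequality to
\[
(1-c\varepsilon_0)\left( \frac{n-6}{n-m} a_m + \frac{6-m}{n-m} a_n \right) \;\ge\; (1+c\varepsilon_0)\, a_6 .
\]
Since $x \mapsto x\tan\frac{\pi}{x}$ (hence, after an easy check, $x\mapsto \sqrt{x\tan\frac{\pi}{x}}$) is strictly convex and decreasing for $x \ge 3$, the quantity $\frac{n-6}{n-m}a_m + \frac{6-m}{n-m}a_n$ is minimized over all admissible $m<6<n$ at $m=5$, $n=7$, giving the bound $\tfrac12 a_5 + \tfrac12 a_7$. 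Thus it suffices to have $(1-c\varepsilon_0)(\tfrac12 a_5 + \tfrac12 a_7) \ge (1+c\varepsilon_0) a_6$, which rearranges exactly to $c\varepsilon_0 \le \frac{a_5 + a_7 - 2a_6}{a_5 + a_7 + 2a_6}$; with $c=1$ this is precisely the defining equation of $\varepsilon_0$, so equality case is admissible and the weak $\tfrac12$-Dowker property holds. Theorem~\ref{thm:generalized} with $\alpha = \tfrac12$ then gives the $1$-honeycomb property, i.e. the honeycomb property.

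I should also dispose of the degenerate terms: when $m = 3$ or $m=4$ or $m=5$ and $n$ large, the convexity/monotonicity argument still applies since we only ever decrease the left side by replacing $(m,n)$ by $(5,7)$ — this needs the elementary claim that among the finitely relevant shapes $\frac{n-6}{n-m}a_m + \frac{6-m}{n-m}a_n \ge \frac12 a_5 + \frac12 a_7$, which follows because this expression is the value at $x=6$ of the chord of the convex function $x\mapsto a_x$ through $(m,a_m)$ and $(n,a_n)$, and chords of a convex function through points straddling $6$ are pointwise minimized by taking the straddling points as close to $6$ as possible, namely $5$ and $7$. The main obstacle — really the only nontrivial point — is verifying the strict convexity and monotonicity of $x \mapsto \sqrt{x\tan\frac{\pi}{x}}$ on $[3,\infty)$ (convexity of $x\tan\frac\pi x$ alone does not immediately give convexity of its square root, so one checks the second derivative, or notes $\log(x\tan\frac\pi x)$ is convex, which suffices since a log-convex function has a convex square root). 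Everything else is bookkeeping with the Hausdorff-distance-to-polar estimate and the explicit numeric value of $\varepsilon_0$.
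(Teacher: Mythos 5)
Your proposal is correct and follows essentially the same route as the paper: reduce via Theorem~\ref{thm:generalized} to the weak $\tfrac12$-Dowker property of $M_{iso}$, sandwich $M_{iso}$ between Euclidean disks using the polar containments, bound $A_{M_{iso}}(n)$ by monotonicity, observe the chord is minimized at $(m,n)=(5,7)$, and close with the defining inequality for $\varepsilon_0$. The only differences are cosmetic: the paper works directly with the factors $\tfrac{1}{1+\varepsilon}$ and $\tfrac{1}{1-\varepsilon}$ rather than your rescaled $1\mp\varepsilon$ normalization (legitimate, since the weak Dowker property is scale-invariant, and the resulting inequality is identical), and you spell out the convexity argument for the $(5,7)$ minimization that the paper merely asserts.
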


\begin{proof}
Observe that $d_H(M,\BB^2) \leq \varepsilon$ implies that $(1-\varepsilon) \BB^2 \subseteq M \subseteq (1+\varepsilon) \BB^2$. On the other hand, since the isoperimetrix $M_{iso}$ of $\M$ is a rotated copy of the polar of $M$, this yields that $\frac{1}{1+\varepsilon} \BB^2 \subseteq M_{iso} \subseteq \frac{1}{1-\varepsilon} \BB^2$.

By Theorem~\ref{thm:generalized}, it is sufficient to prove that $M_{iso}$ satisfies the weak $\frac{1}{2}$-Dowker property. Since for any $n \geq 3$ and $K \subseteq L$, we have $A_K(n) \leq A_L(n)$, it follows that
$\frac{n \tan \frac{\pi}{n} }{(1+\varepsilon)^2} \leq A_{M_{iso}}(n) \leq \frac{n \tan \frac{\pi}{n} }{(1-\varepsilon)^2}$. This yields that it is sufficient to prove that for any $3 \leq m < 6 < n$ and $\varepsilon \leq \varepsilon_0$, we have
\[
\frac{1}{1+\varepsilon} \left( \frac{n-6}{n-m} \sqrt{m \tan \frac{\pi}{m}} + \frac{6-m}{n-m} \sqrt{n \tan \frac{\pi}{n}} \right) \geq \frac{1}{1-\varepsilon} \sqrt{6 \tan \frac{\pi}{6}}.
\]
By the properties of $A_{\BB^2}(n) = n \tan \frac{\pi}{n}$, the left-hand side is minimal if $m=5$ and $n=7$. Thus, it is sufficient to show that
\[
\frac{1}{1+\varepsilon} \left( \sqrt{5 \tan \frac{\pi}{5}} + \sqrt{7 \tan \frac{\pi}{7}} \right) \geq \frac{2}{1-\varepsilon} \sqrt{6 \tan \frac{\pi}{6}}
\]
holds for any $\varepsilon \leq \varepsilon_0$. But this readily follows from the definition of $\varepsilon_0$.
\end{proof}

Our next result shows that convex disks with sufficiently smooth boundaries `asymptotically' satisfy the log-Dowker property.

\begin{theorem}\label{thm:asymptotic}
If $K$ is a convex disk in $\Re^2$ with $C^4$-class boundary and strictly positive curvature everywhere, then there is some value $n(K) \in \Re$ such that for any $n \geq n(K)$, we have
\[
\log A_K(n-1) + \log A_K(n+1) \geq 2 \log A_K(n).
\]
\end{theorem}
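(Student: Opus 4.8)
The plan is to obtain an asymptotic expansion of $A_K(n)$ as $n \to \infty$ with enough precision that the convexity of the sequence $\{\log A_K(n)\}$ can be read off from the leading correction term. The starting point is the classical asymptotic formula for the minimal area of a circumscribed $n$-gon about a smooth convex disk: if $K$ has $C^4$ boundary with positive curvature, then, writing $\kappa$ for the curvature as a function of arc length and $\ell = \perim(K)$, one has an expansion of the shape
\[
A_K(n) = \area(K) + \frac{c_1(K)}{n^2} + \frac{c_2(K)}{n^3} + \frac{c_3(K)}{n^4} + o\!\left(\frac{1}{n^4}\right),
\]
where $c_1(K) = \frac{1}{12}\left(\int_{\bd K} \kappa^{1/3}\,ds\right)^3 > 0$ is the constant from the affine-isoperimetric-type result of McClure and Vitale (and Ludwig) on best-approximating circumscribed polygons. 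The $C^4$ hypothesis is exactly what is needed to push the expansion to the $1/n^4$ term with an $o(1/n^4)$ remainder; this is where the smoothness assumption gets used, and locating a clean reference or self-contained derivation for the $1/n^4$ term is the main obstacle I anticipate.

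Granting such an expansion, the argument is routine calculus of finite differences. First I would reduce the claim to a statement purely about the sequence $a_n := A_K(n)$: since $\log$ is smooth and $a_n \to \area(K) =: a_\infty > 0$, for large $n$ we have $\log a_{n-1} + \log a_{n+1} - 2\log a_n = \frac{1}{a_\infty}\big(a_{n-1}+a_{n+1}-2a_n\big) + O\big((a_n - a_\infty)\cdot (\text{second difference})\big)$, and a short computation shows the dominant term of the second difference of $\log a_n$ has the same sign as the second difference of $a_n$ itself, up to lower order. So it suffices to show $a_{n-1} + a_{n+1} - 2a_n \geq 0$ for all large $n$, i.e. that $\{A_K(n)\}$ is eventually convex — which in any case already follows from Theorem~\ref{thm:Dowker1}, so the positivity of the second difference is not in doubt; what I actually need is a lower bound on its \emph{size} relative to the error terms coming from $\log$.

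Concretely: applying the second-difference operator to the expansion, the $1/n^2$ term contributes $c_1(K)\big[(n-1)^{-2} + (n+1)^{-2} - 2n^{-2}\big] = c_1(K)\cdot\big(6 n^{-4} + O(n^{-5})\big)$, which is positive of order $n^{-4}$; the $1/n^3$ term contributes something of order $n^{-5}$; and the $1/n^4$ and remainder terms contribute $O(n^{-6})$ plus the controlled $o(1/n^4)$-type error after differencing, which I would bound by $o(n^{-4})$ by a standard mean-value argument on the remainder. Hence $a_{n-1}+a_{n+1}-2a_n = 6c_1(K)n^{-4} + o(n^{-4})$, which is strictly positive once $n$ exceeds some threshold $n(K)$. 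Feeding this through the $\log$-reduction above — where the correction terms are smaller by a factor $a_n - a_\infty = O(n^{-2})$ — gives $\log A_K(n-1) + \log A_K(n+1) - 2\log A_K(n) = \frac{6c_1(K)}{a_\infty}n^{-4} + o(n^{-4}) > 0$ for $n \geq n(K)$, as required. The only genuinely delicate point is justifying the $o(1/n^4)$ remainder in the circumscribed-polygon asymptotics under the stated $C^4$ hypothesis; everything after that is bookkeeping.
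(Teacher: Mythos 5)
Your proposal is correct and takes essentially the same route as the paper: the expansion you flag as the ``main obstacle'' is exactly the cited theorem of Ludwig, which gives $A_K(n) = A + \frac{B}{n^2} + \frac{C}{n^4} + o\left(\frac{1}{n^4}\right)$ with $A,B>0$ (no $1/n^3$ term is needed) under the stated $C^4$ and positive-curvature hypotheses. The paper then does the same finite-difference bookkeeping, phrased multiplicatively: it shows $A_K(n-1)A_K(n+1) - A_K(n)^2 = \frac{AB(6n^2-2)}{(n^2-1)^2 n^2} + o\left(\frac{1}{n^4}\right) > 0$ for large $n$, which is equivalent to your direct expansion of the second difference of $\log A_K(n)$ (and note the $o(1/n^4)$ remainder is handled by termwise bounding, no mean-value argument being available or required).
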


In the proof we intend to use the following result of Ludwig \cite{Ludwig}.

\begin{lemma}[Ludwig]\label{thm:Ludwig}
Let $K$ be a convex disk in $\Re^2$ with $C^4$-class boundary and strictly positive curvature everywhere. Then there are quantities $A,B > 0$ and $C \in \Re$, depending only on $K$, such that
\[
A_K(n) = A + \frac{B}{n^2}+\frac{C}{n^4} + o \left( \frac{1}{n^4} \right).
\] 
\end{lemma}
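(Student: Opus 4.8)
The plan is to establish the expansion by a direct geometric analysis of the minimal circumscribed polygons, in the spirit of the asymptotic approximation results of McClure--Vitale and of Ludwig. Since $\bd K$ is $C^4$ with strictly positive curvature, I would parametrize it by the tangent angle $\theta \in [0,2\pi)$ and write $\rho(\theta) = 1/\kappa(\theta)$ for the radius of curvature; the hypothesis then guarantees $\rho \in C^2$, which is exactly the amount of smoothness the argument consumes. The first step is a local decomposition of the excess area. By the same variational argument as in Lemma~\ref{lem:minimumarea} (a supporting line of a smooth strictly convex body touches at a single point, which must be the midpoint of the side), a minimum-area circumscribed $n$-gon is determined by its $n$ tangent angles $\theta_1 < \dots < \theta_n$, and
\[
A_K(n) - \area(K) = \min_{\theta_1 < \dots < \theta_n} \sum_{i=1}^{n} g(\theta_i, \theta_{i+1}),
\]
where $g(\theta_i,\theta_{i+1})$ is the area of the cap bounded by the two tangent lines at $\theta_i,\theta_{i+1}$ and the boundary arc between the two points of tangency (indices mod $n$). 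The midpoint property is precisely the first-order optimality condition $\partial_2 g(\theta_{i-1},\theta_i) + \partial_1 g(\theta_i,\theta_{i+1}) = 0$, so no extra constraint need be imposed.

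The second step is a local expansion of the cap function. The model computation for a circle of radius $R$ gives $g = R^2\bigl(\tan(\delta/2) - \delta/2\bigr) = \tfrac{1}{24}R^2\delta^3 + O(\delta^5)$, and a support-function computation in the general case yields, for $\delta = \theta_{i+1}-\theta_i$ small,
\[
g(\theta,\theta+\delta) = \frac{1}{24}\,\rho(\theta)^2\,\delta^3 + a_1(\theta)\,\delta^4 + a_2(\theta)\,\delta^5 + O(\delta^6),
\]
with coefficients $a_1,a_2$ that are explicit polynomials in $\rho,\rho',\rho''$; it is here that the two derivatives of $\rho$, hence the $C^4$ regularity of $\bd K$, are required. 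The expansion must be carried far enough that, after summation over the $n$ caps of width $\delta = O(1/n)$, the neglected terms are genuinely $o(n^{-4})$.

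The third step is the asymptotic solution of this one-dimensional optimization. Introducing the affine parameter $t(\theta) = \Lambda^{-1}\int_0^\theta \rho(\phi)^{2/3}\,d\phi$ with $\Lambda = \int_0^{2\pi}\rho^{2/3}\,d\phi$, the continuous relaxation is minimized by points equispaced in $t$, and substituting this configuration recovers the leading term with
\[
B = \frac{1}{24}\left(\int_0^{2\pi}\rho(\theta)^{2/3}\,d\theta\right)^3 = \frac{1}{24}\left(\int_{\bd K}\kappa^{1/3}\,ds\right)^3 > 0,
\]
so that $A = \area(K) > 0$ and $B > 0$ as required. To extract the next term I would expand the value of the \emph{discrete} minimizer about the equispaced configuration: the first variation vanishes, so the gap between the true minimum and the value at the exact equispaced points enters only at second order, while the intrinsic discretization error of the resulting periodic sum is handled by the Euler--Maclaurin formula. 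Because $\bd K$ is a \emph{closed} curve, the Euler--Maclaurin boundary terms cancel and the corrections occur only in even powers of $1/n$; this forces the vanishing of the $1/n$ and $1/n^3$ terms and produces an expansion of the form $A + B/n^2 + C/n^4 + o(1/n^4)$ with $C \in \Re$ of indeterminate sign. (For $K = R\,\BB^2$ this reproduces $A_K(n) = nR^2\tan(\pi/n) = \pi R^2 + \tfrac{\pi^3R^2}{3}n^{-2} + \tfrac{2\pi^5R^2}{15}n^{-4}+\cdots$, which is a useful check.)

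The main obstacle is making this last, heuristic continuous-to-discrete passage into a rigorous two-sided estimate with remainder $o(n^{-4})$. Concretely, one must (i) obtain a matching \emph{upper} bound by exhibiting an explicit circumscribed $n$-gon — the one with tangent points equispaced in $t$, perturbed so as to satisfy the midpoint conditions — and computing its area to order $n^{-4}$; and (ii) obtain a matching \emph{lower} bound valid for \emph{every} circumscribed $n$-gon, which I would derive from the convexity of the map $\delta \mapsto g(\theta,\theta+\delta)$ in the gap variable (a local analogue of Dowker's convexity) together with a second-variation estimate controlling how far a near-optimal configuration can deviate from equispacing. Keeping every error term uniform in $\theta$ — which is exactly where the strictly positive curvature and the $C^4$ bound enter quantitatively — is the delicate part; once the total summation error is shown to be $o(n^{-4})$, the coefficients $A$, $B$ and $C$ can be read off from the Euler--Maclaurin expansion, completing the proof.
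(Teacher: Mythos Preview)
The paper does not prove this lemma at all: it is quoted as a result of Ludwig \cite{Ludwig} and used as a black box in the proof of Theorem~\ref{thm:asymptotic}. There is therefore no ``paper's own proof'' to compare your proposal against.

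That said, your sketch is a faithful outline of how Ludwig's argument actually proceeds (local cap expansion in the tangent-angle parameter, optimal spacing governed by the affine-arclength density $\kappa^{1/3}\,ds$, and an Euler--Maclaurin step to control the discretization error), and your identifications $A=\area(K)$ and $B=\tfrac{1}{24}\bigl(\int_{\bd K}\kappa^{1/3}\,ds\bigr)^3$ are correct. You also correctly flag the genuine difficulty: turning the continuous-to-discrete heuristic into a two-sided bound with remainder $o(n^{-4})$ requires a careful second-variation argument showing that near-minimizers are $O(1/n^2)$-close (in the affine parameter) to the equispaced configuration, so that the value gap is $O(1/n^5)$ and does not pollute the $n^{-4}$ coefficient. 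That step is exactly what Ludwig's paper supplies, and it is nontrivial; as written, your proposal is an accurate roadmap rather than a proof. For the purposes of the present paper, though, a citation is all that is needed.
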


\begin{proof}[Proof of Theorem~\ref{thm:asymptotic}]
Let us define $f(n) = A_K(n-1) A_K(n+1)-A^2_K(n)$. Then
\begin{multline*}
f(n) = \left( A + \frac{B}{(n-1)^2}+\frac{C}{(n-1)^4} + o \left( \frac{1}{n^4} \right) \right) \cdot \\
\cdot \left( A + \frac{B}{(n+1)^2}+\frac{C}{(n+1)^4} + o \left( \frac{1}{n^4} \right) \right) - \left( A + \frac{B}{n^2}+\frac{C}{n^4} + o \left( \frac{1}{n^4} \right) \right)^2 .
\end{multline*}
Thus, an elementary computation yields that
\begin{multline*}
f(n) = AB \left( \frac{1}{(n-1)^2} + \frac{1}{(n+1)^2} - \frac{2}{n^2} \right) + AC \left( \frac{1}{(n-1)^4} + \frac{1}{(n+1)^4} - \frac{2}{n^4} \right) + \\
  + B^2 \left( \frac{1}{(n-1)^2(n+1)^2} - \frac{1}{n^4} \right) + o  \left( \frac{1}{n^4} \right).
\end{multline*}
On the other hand, it can be checked that the orders of magnitude of $\frac{1}{(n-1)^4} + \frac{1}{(n+1)^4} - \frac{2}{n^4}$ and $\frac{1}{(n-1)^2(n+1)^2} - \frac{1}{n^4}$ are both $o  \left( \frac{1}{n^4} \right)$. Thus, after simplification, we obtain that
\[
f(n) = \frac{AB(6n^2-2)}{(n^2-1)^2n^2} + o  \left( \frac{1}{n^4} \right),
\]
which is positive if $n$ is sufficiently large.
\end{proof}

%We ask the following question:

%\begin{question}
%    Let $K$ be a convex disk. Assume that for some $n \geq 4$, we have
%\begin{equation*}
%    A_K(n-1)+A_K(n+1)= 2 A_K(n).
%\end{equation*}
%Is it true that $K$ is a polygon?
%\end{question}

%We give a weaker proof for the case that $K$ is smooth and strictly convex.

In Section~\ref{sec:polygonal} we have seen that there are polygons which do not satisfy the $\alpha$-Dowker property for any $\alpha < 1$ (see Remark~\ref{rem:onlyweak}). Our next theorem can be regarded as a counterpoint of this observation. Before stating it, recall that a convex disk $K$ is smooth if its every boundary point belongs to a unique supporting line of $K$.

\begin{theorem}\label{thm:betterthanone}
Let $K$ be smooth and strictly convex. Then there is some value $\alpha < 1$ such that $K$ satisfies the weak $\alpha$-Dowker property.
\end{theorem}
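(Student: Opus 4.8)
The plan is to exploit the asymptotic expansion of $A_K(n)$ together with the known convexity of $\{A_K(n)\}$ (Theorem~\ref{thm:Dowker1}) to ``upgrade'' convexity slightly below exponent $1$. The key quantitative fact is that for a smooth and strictly convex disk, the circumscribed minimal areas strictly decrease to $\area(K)$, and the successive gaps $A_K(n)-A_K(n+1)$ are comparable in size for consecutive $n$. More precisely, I would first establish two lemmas:
\begin{enumerate}
\item[(i)] \emph{Strict monotone decrease with controlled ratios.} For $K$ smooth and strictly convex, $A_K(n) > A_K(n+1) > \area(K)$ for all $n \geq 3$, and moreover there are constants $0 < c_1 \leq c_2$ (depending on $K$) with
\[
c_1 \leq \frac{A_K(n-1)-A_K(n)}{A_K(n)-A_K(n+1)} \leq c_2
\]
for all $n \geq 4$. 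The upper bound here is essentially Dowker's convexity (the numerator is at least the denominator, so $c_2$ can eventually be taken close to... no: convexity gives the ratio $\geq 1$, i.e.\ a \emph{lower} bound $c_1 = 1$); the genuinely new content is a matching \emph{upper} bound $c_2$, which one gets from smoothness via a compactness/uniformity argument for small $n$ and from Ludwig's expansion (Lemma~\ref{thm:Ludwig}) for large $n$, where the differences behave like $B(6n^{-4}+o(n^{-4}))\cdot(\text{const})$... more carefully, $A_K(n-1)+A_K(n+1)-2A_K(n) \sim 6B/n^4$ while $A_K(n)-A_K(n+1)\sim 2B/n^3$, so the ratio in (i) tends to $1$; combined with strict positivity on the finite range we obtain a finite $c_2$.
\item[(ii)] \emph{Upper bound on the total gap.} There is a constant $c_3$ with $A_K(n) - \area(K) \leq c_3$ for all $n$ (trivially, $c_3 = A_K(3)-\area(K)$), and a lower bound $A_K(n)-\area(K) \geq c_4 > 0$ is false as $n\to\infty$; instead we only need $A_K(n)-\area(K)>0$ strictly, which holds by strict convexity.
\end{enumerate}

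Given these, the core computation is the following. Write $a_n = A_K(n)$ and $S = \area(K) = \lim a_n$. For $\alpha \in (0,1)$, I want
\[
a_{n-1}^\alpha + a_{n+1}^\alpha - 2 a_n^\alpha \geq 0 \quad\text{for all } n \geq 4.
\]
Set $g(x) = x^\alpha$, which is concave. By the mean value form,
\[
a_{n-1}^\alpha + a_{n+1}^\alpha - 2 a_n^\alpha = \bigl(g(a_{n-1})-g(a_n)\bigr) - \bigl(g(a_n)-g(a_{n+1})\bigr)
= g'(\xi_1)(a_{n-1}-a_n) - g'(\xi_2)(a_n - a_{n+1}),
\]
with $a_{n+1} < \xi_2 < a_n < \xi_1 < a_{n-1}$. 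Here $a_{n-1}-a_n \geq a_n - a_{n+1} > 0$ by Dowker (the $\alpha=1$ case) and $g'$ is positive and \emph{decreasing} (since $\alpha<1$, $g'(x) = \alpha x^{\alpha-1}$ is decreasing), so the two effects compete: the factor multiplying the larger difference is smaller. I would bound
\[
g'(\xi_1)(a_{n-1}-a_n) - g'(\xi_2)(a_n-a_{n+1}) \geq g'(a_{n-1})(a_{n-1}-a_n) - g'(a_{n+1})(a_n-a_{n+1}),
\]
and then show this is $\geq 0$ for $\alpha$ close enough to $1$, uniformly in $n$. Writing $r_n = (a_n - a_{n+1})/(a_{n-1}-a_n) \in (0,1]$ (using (i), $r_n \in [1/c_2, 1]$) and $\rho_n = a_{n+1}/a_{n-1} \in (0,1)$, the inequality becomes $\alpha a_{n-1}^{\alpha-1} \geq \alpha a_{n+1}^{\alpha-1} r_n$, i.e.\ $r_n \leq (a_{n+1}/a_{n-1})^{1-\alpha} = \rho_n^{1-\alpha}$. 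Since $\rho_n$ is bounded away from $0$ (as $a_{n+1}\geq S$ and $a_{n-1}\leq a_3$, so $\rho_n \geq S/a_3 =: \rho_* > 0$) and $r_n \leq 1$, it suffices to have $\rho_*^{1-\alpha} \geq 1$, which is automatic, OR — since we actually need $r_n\le\rho_n^{1-\alpha}$ and $r_n$ can equal $1$ — we need $\rho_n^{1-\alpha}\geq 1$, impossible unless we use $r_n$ strictly below $1$. The fix: Dowker's theorem gives $r_n\leq 1$ but for smooth strictly convex $K$ one expects $r_n$ bounded \emph{strictly} below $1$ only asymptotically ($r_n\to 1$!). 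So the naive bound fails for large $n$, and one must instead compare second differences directly: for large $n$ use Ludwig's expansion to get $a_{n-1}^\alpha+a_{n+1}^\alpha-2a_n^\alpha = \alpha S^{\alpha-1}(a_{n-1}+a_{n+1}-2a_n) + O(n^{-6}) = \alpha S^{\alpha-1}\cdot 6B n^{-4}(1+o(1)) > 0$ since $B>0$; for the finitely many small $n$ (say $4\leq n\leq N(K)$), the strict Dowker inequality $a_{n-1}+a_{n+1}>2a_n$ (strict for smooth strictly convex $K$, which I would verify, e.g.\ via the rotation argument of Lemma~\ref{lem:minimumarea}) means $a_{n-1}^\alpha+a_{n+1}^\alpha-2a_n^\alpha$ is continuous in $\alpha$ and positive at $\alpha=1$, hence positive for $\alpha\in(1-\delta_n,1]$; take $\alpha > 1-\min_n\delta_n$.

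\textbf{Main obstacle.} The delicate point is the transition between the ``small $n$'' regime (finitely many strict inequalities, handled by continuity in $\alpha$) and the ``large $n$'' regime (handled by Ludwig's fourth-order expansion). One must (a) verify that Dowker's inequality is \emph{strict} for every $n\geq 4$ when $K$ is smooth and strictly convex — this requires showing the minimal circumscribed $(n-1)$-gon, $n$-gon, $(n+1)$-gon cannot be ``aligned'' in the degenerate way that produces equality, which I would do by a perturbation argument showing any extremal configuration with $a_{n-1}+a_{n+1}=2a_n$ would let one decrease $a_n$; and (b) pin down an explicit crossover index $N(K)$ beyond which the $O(n^{-6})$ error term in the Ludwig estimate for $a_{n-1}^\alpha+a_{n+1}^\alpha-2a_n^\alpha$ is dominated by the main term $6\alpha S^{\alpha-1}Bn^{-4}$ — note this domination is uniform in $\alpha\in[1/2,1]$, so no further shrinking of $\alpha$ is forced by the tail. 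Once both regimes are covered, choosing $\alpha < 1$ smaller than all the finitely many thresholds $1-\delta_n$ ($4\leq n\leq N(K)$) and at least $1/2$ finishes the proof; the uniformity over the infinite tail is what makes a single $\alpha<1$ work.
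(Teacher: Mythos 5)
There is a genuine gap, and it concerns exactly the regime you flag as delicate. Your treatment of the ``large $n$'' regime rests on Ludwig's expansion (Lemma~\ref{thm:Ludwig}), but that lemma requires a $C^4$ boundary with strictly positive curvature everywhere, whereas Theorem~\ref{thm:betterthanone} assumes only that $K$ is smooth and strictly convex; such a $K$ need not admit any expansion of the form $A_K(n)=A+Bn^{-2}+Cn^{-4}+o(n^{-4})$, so the claimed domination $a_{n-1}^\alpha+a_{n+1}^\alpha-2a_n^\alpha\sim 6\alpha S^{\alpha-1}Bn^{-4}>0$ is not available under the hypotheses. The paper sidesteps this entirely by proving only the \emph{weak} $\alpha$-Dowker property: for $m\in\{3,4,5\}$ and $n$ large, the left-hand side of (\ref{eq:weakalphadowker}) tends to $A_K^{\alpha}(m)$, and since $\{A_K(n)\}$ is strictly decreasing (as $K$ is not a polygon) one has $A_K(m)>A_K(6)$, so the chord inequality holds for all $n>n(\alpha_0)$ and all $\alpha\geq\alpha_0$ for some $\alpha_0<1$ — no asymptotic expansion and no consecutive-triple convexity for large $n$ is needed. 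Your plan, which tries to establish the full convexity of $\{A_K^{\alpha}(n)\}$ for all $n$, is proving more than required and, as written, cannot be completed under the stated smoothness assumptions.

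The second issue is that what you correctly identify as obstacle (a) — strictness of Dowker's inequality $A_K(n-1)+A_K(n+1)>2A_K(n)$ for smooth, strictly convex $K$ — is in fact the bulk of the paper's proof, and your one-line plan (``any extremal configuration would let one decrease $a_n$'') does not capture it. The paper reruns Dowker's argument on the boundary arcs: it forms the $2$-tiling of $\bd(K)$ by the tangent arcs of the optimal $(n-1)$- and $(n+1)$-gons, removes covering pairs via the exchange inequality (\ref{eq:Dowkerold}) (strict when the four tangent points are distinct), and then rules out the equality case by a midpoint-of-side argument combined with a counting/parity contradiction on ``double vertices'' (tangent points shared by both optimal polygons), using smoothness and strict convexity to guarantee distinct tangent points and the midpoint property. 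Without this (or an equivalent) argument, the finitely many thresholds $\alpha_n<1$ you invoke are not justified, since Theorem~\ref{thm:Dowker1} alone gives only the non-strict inequality, under which $a_{n-1}^\alpha+a_{n+1}^\alpha-2a_n^\alpha$ can be negative for every $\alpha<1$ (as Remark~\ref{rem:onlyweak} shows happens for polygons). So the overall architecture (strictness for small $n$ plus continuity in $\alpha$, plus a separate tail argument) is the right one, but both pillars — the tail estimate and the strictness — are missing or unusable as proposed.
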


\begin{proof}
First, observe that by our conditions, any supporting line of $K$ intersects $K$ in a unique point, and for any point $p$ of $\bd(K)$ there is a unique supporting line of $K$ that contains $p$. This yields, in particular, that for any $n \geq 3$ and any minimum area convex $n$-gon $Q_n$ circumscribed about $K$, $Q_n$ has exactly $n$ sides, and the tangent points on those sides are distinct. For any point $p \in \bd(K)$, we denote the unique closed half plane supporting $K$ at $p$ by $H(p)$, and the boundary of $H(p)$ by $L(p)$. We call boundary points of $K$ with opposite unit normal vectors \emph{opposite}, or \emph{antipodal} points. Note that if $p,q \in \bd(K)$ are not antipodal points of $K$, then there is a unique closed arc of $\bd(K)$ connecting them that does not contain antipodal points. We denote this arc by $\widehat{pq}$, and denote by $A(\widehat{pq})$ the area of the bounded, connected region $R(\widehat{pq})$ whose boundary consists of $\widehat{pq}$ and the segments of $L(p)$ and $L(q)$ connecting $p,q$ to the intersection point of the two lines. We observe that the problem of finding $A_K(n)$ coincides with the problem of finding a tiling of $\bd(K)$ by $n$ arcs $\widehat{x_ix_{i+1}}$, $i=1,2,n$, $x_{n+1}=x_1$ such that $f=\sum_{i=1}^n A(\widehat{x_i x_{i+1}})$ is minimal.

First, we fix an arbitrary integer $n \geq 4$, and show that $A_K(n-1)+A_K(n+1) > 2 A_K(n)$. To do it, we follow the proof of the original theorem of Dowker \cite[Theorem 1]{CHD}.

Let $Q_{n-1}$ and $Q_{n+1}$ denote a minimum area convex $(n-1)$-gon and $(n+1)$-gon, respectively, circumscribed about $K$.
Let the tangent points of $K$ on the sides of $Q_{n-1}$ be $p_1,p_2, \ldots,p_{n-1}$ in counterclockwise order, and we denote the tangent points of $Q_{n+1}$ in counterclockwise order on $\bd(K)$ by $q_1,q_2,\ldots,q_{n+1}$, where the indices are understood mod $(n-1)$ and $(n+1)$, respectively. Note that then $Q_{n-1}=\bigcap_{i=1}^{n-1} H(p_i)$ and $Q_{n+1}= \bigcap_{j=1}^{n+1} H(q_j)$.
Since the areas of these regions are finite, for any two consecutive tangent points $p_i$, $p_{i+1}$, no arc $\widehat{p_ip_{i+1}}$ contains antipodal points of $K$, and the same is true for the points $q_j$.

Due to the possible existence of coinciding points in the above two sequences, we unite these sequences as a single sequence $v_1, v_2, \ldots, v_{2n}$ in such a way that the points are in this counterclockwise order in $\bd(K)$, $v_1=p_1$.
%, and removing all the $p_i$ (resp. all the $q_j$) from this sequence we obtain the sequence $q_1, \ldots, q_{n+1}$ (resp. $p_1, \ldots, p_{n-1}$).
In the proof we regard this sequence as a cyclic sequence, where the indices are determined mod $2n$, and, with a little abuse of notation, we say that $\widehat{v_iv_j}$ \emph{covers} $\widehat{v_kv_l}$ if $\widehat{v_kv_l} \subseteq \widehat{v_iv_j}$ and $i < k < l < j < i+2n$.

Observe that the family of arcs $\widehat{p_ip_{i+1}}$, $\widehat{q_jq_{j+1}}$ is a $2$-tiling $\T_0$ of $\bd(K)$; that is, every point of $\bd(K)$ belongs to at least two such arcs, and no point belongs to the interior of more than two such arcs.
Our main goal will be to modify the $2$-tiling $\T_0$ in such a way that the value of $f$ does not increase but the number of covering pairs strictly decreases.

Note that since $\T_0$ is the union of two tilings consisting of $(n-1)$ and $(n+1)$ arcs, respectively, $\T_0$ contains covering pairs. 
Assume that $\widehat{v_iv_j}$ covers $\widehat{v_kv_l}$. Then let $\T_1$ denote the $2$-tiling of $\bd(K)$ in which $\widehat{v_iv_j}$ and $\widehat{v_kv_l}$
are replaced by $\widehat{v_iv_l}$ and $\widehat{v_kv_j}$.
Our main observation, as in the proof of \cite[Theorem 1]{CHD} is that in this case
\begin{equation}\label{eq:Dowkerold}
A(\widehat{v_iv_j}) + A(\widehat{v_kv_l}) \geq A(\widehat{v_iv_l}) + A(\widehat{v_kv_j}),
\end{equation}
and if the four points are pairwise distinct, then here we have strict inequality.

According to our conditions, $\sum_{S \in \T_0} A(S) \geq \sum_{S \in \T_1} A(S)$. Furthermore, as $\T_0$ and $\T_1$ are $2$-tilings, and $\widehat{v_kv_l}$ is already covered twice by the arcs considered in the modification, the number of covering pairs in $\T_1$ is strictly less than in $\T_0$. Repeating this procedure we obtain a $2$-tiling $\T_m$ of $\bd(K)$ for which $\sum_{S \in \T_0} A(S) \geq \sum_{S \in \T_,} A(S)$ and which does not contain covering pairs. Then, $\T_m$ decomposes into two tilings $\{ \widehat{v_1 v_3}, \widehat{v_3v_5}, \ldots, \widehat{v_{2n-1}v_1} \}$ and $\{ \widehat{v_2 v_4}, \widehat{v_4v_6}, \ldots, \widehat{v_{2n}v_2} \}$, each of which contains exactly $n$ arcs.

Let $V_1$ be defined as the circumscribed convex $n$-gon touching $K$ at the vertices $v_i$ with odd indices, and $V_2$ be the circumscribed convex $n$-gon touching $K$ at the vertices $v_i$ with even indices. Then
\begin{equation}\label{eq:areas}
\area(Q_{n-1})+\area(Q_{n+1}) \geq \area(V_1) + \area(V_2).
\end{equation}
Since $V_1$ and $V_2$ are convex $n$-gons circumscribed about $K$, if in (\ref{eq:areas}) we have strict inequality, or if $\area(V_i) > A_K(n)$ for $i=1$ or $i=2$, we are done. Thus, in the following we assume for contradiction that $\area(Q_{n-1})+\area(Q_{n+1}) = \area(V_1) + \area(V_2)$, and $\area(V_1)=\area(V_2)=A_K(n)$.
Then, by the remark right after (\ref{eq:Dowkerold}) about the equality case in the inequality, there is no arc $\widehat{p_ip_{i+1}}$ that contains points $q_j,q_{j+1}$ in its interior, and vice versa. In particular, there are at least two vertices of $Q_{n-1}$ that coincide with some vertices of $Q_{n+1}$.
%To get a contradiction, we show that every vertex of $P_{n+1}$ coincides with some vertex of $P_{n-1}$.

Without loss of generality, let $p_2=q_2$ corresponding to $v_3$ and $v_4$, respectively, in the united sequence of vertices. Then $\{ p_1,q_1 \} = \{ v_1,v_2 \}$ and $\{p_3,q_3 \} = \{ v_5,v_6\}$. For $i=1,3$, let $p_i'$ denote the intersection point of $L(p_i)$ and $L(p_2)$, and let $q_i'$ denote the intersection point of $L(q_i)$ and $L(q_2)=L(p_2)$. Note that since $Q_{n-1}$ is a minimum area convex $(n-1)$-gon circumscribed about $K$, the tangent point on every side is the midpoint of the side. Thus, $p_2$ is the midpoint of $[p_1',p_3']$, and we obtain similarly that $q_2=p_2$ is the midpoint of $[q_1',q_3']$.

\begin{figure}[ht]
\begin{center}
\includegraphics[width=0.75\textwidth]{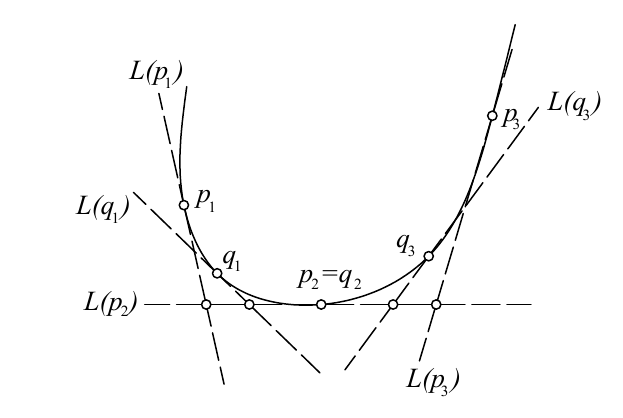}
\caption{The configuration in Case 1 of the proof of Theorem~\ref{thm:betterthanone}.}
\label{fig:dowker}
\end{center}
\end{figure}

\emph{Case 1}, $p_1=v_1$, $p_3=v_6$ (see Figure~\ref{fig:dowker}). Then, since $V_1, V_2$ are minimum area circumscribed $n$-gons, we have that $p_2=q_2$ is the midpoint of $[p_1',q_3']$ as well as that of $[q_1',p_3']$. But then $||p_3'-p_2||=||q_3'-p_2|| = ||p_1'-p_2|| = ||q_3'-p_2||$, implying that $p_1=q_1$ and $p_3=q_3$.

\emph{Case 2}, $p_1=v_2$, $p_3=v_5$. Then by a similar argument, we obtain that $p_1=q_1$ and $p_3=q_3$.

\emph{Case 3}, $p_1=v_1$ and $p_3=v_5$. Then we have that either $p_1=q_1$ and $p_3=q_3$, or $p_1 \neq q_1$ and $p_3 \neq q_3$. 

\emph{Case 4}, $p_1=v_2$ and $p_3=v_6$. Then we have that either $p_1=q_1$ and $p_3=q_3$, or $p_1 \neq q_1$ and $p_3 \neq q_3$. 

Now, let us call a point $q_j$ (resp. $p_i$) a \emph{double vertex}, if it coincides with some $p_i$ (resp. $q_j$).
If for some $j$, both $q_j$ and $q_{j+1}$ are double vertices, then $q_j$ is a double vertex for every $j$ by the previous consideration, which contradicts the fact that $Q_{n+1}$ has strictly more vertices than $Q_{n-1}$. If there is no value of $j$ such that both $q_j$ and $q_{j+1}$ are double vertices, then by the previous consideration any arc of $\bd(K)$ connecting two consecutive double vertices contains the same number of vertices from $Q_{n-1}$ and $Q_{n+1}$. But this also contradicts the fact that $Q_{n+1}$ has strictly more vertices than $Q_{n-1}$.

Until now, we have shown that for any $n \geq 4$, we have $A_K(n-1)+A_K(n+1)> 2 A_K(n)$. 
Thus, for any $n \geq 4$, either $A^{\bar{\alpha}}_K(n-1)+A^{\bar{\alpha}}_K(n+1)> 2 A^{\bar{\alpha}}_K(n)$ is satisfied for all $\bar{\alpha} > 0$, or there is a value $0 < \alpha_n < 1$ such that $A^{\bar{\alpha}}_K(n-1)+A^{\bar{\alpha}}_K(n+1) \geq 2 A^{\bar{\alpha}}_K(n)$ is satisfied if and only if $\bar{\alpha} \geq \alpha_n$.

Note that since $K$ is not a polygon, the sequence $\{ A_K(n) \}$ is strictly decreasing, and every element is strictly greater than $\area(K)$. Thus, there is some $\alpha_0 < 1$ and a positive integer $n(\alpha_0)>6$ such that for any $n > n(\alpha_0)$ and $m \in \{ 3,4,5\}$, the inequality (\ref{eq:weakalphadowker}) is satisfied for any $\alpha \geq \alpha_0$. Now, the quantity $\alpha = \max \{ \alpha_0, \alpha_1, \ldots, \alpha_{n(\alpha_0)} \} < 1$ satisfies the requirements in Theorem~\ref{thm:betterthanone}.
\end{proof}

\section{A conjecture of Steinhaus}\label{sec:Steinhaus}

We recall the following conjecture, appearing as Problem C15 in \cite{CFG}.

\begin{conjecture}[Steinhaus]
 For any tiling $\mathcal{T}$ in the Euclidean plane with tiles whose diameters are at least $D$ for some fixed $D > 0$, the maximum isoperimetric ratio $\frac{\perim(C)^2}{\area(C)}$ of the cells $C$ of $\mathcal{T}$  is minimal if $\mathcal{T}$ is a regular hexagonal tiling.
\end{conjecture}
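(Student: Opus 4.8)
The plan is to show that the regular hexagonal tiling realizes the minimal possible value $8\sqrt{3}$ of the maximum isoperimetric ratio, and that no admissible tiling beats it; that is, every tiling $\T$ with $\diam(C) \geq D$ for all cells $C$ contains a cell with $\frac{\perim(C)^2}{\area(C)} \geq 8\sqrt{3}$. Since the functional $\frac{\perim(C)^2}{\area(C)}$ is invariant under scaling the whole plane, the value of $D$ is immaterial: scaling by $\lambda$ sends admissible tilings for threshold $D$ to those for $\lambda D$ while preserving all ratios, so I may normalize $D=1$, the sole role of the hypothesis being to forbid cells of arbitrarily small size. A regular hexagonal tiling (rescaled so that its cells have diameter at least $1$) has every cell ratio equal to $8\sqrt{3}$, which gives the upper bound, so the whole problem reduces to the matching lower bound.

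First I would dispose of the convex, normal case by the method of Theorem~\ref{thm:suboptimal}. By Lemma~\ref{lem:Euler} the upper average number of sides satisfies $\overline{v}(\T) \leq 6$, so $\T$ contains a cell $C$ with at most six sides; among convex $v$-gons of fixed area the regular one minimizes perimeter, hence minimizes the scale-invariant ratio, whose value $4v\tan\frac{\pi}{v}$ is decreasing in $v$. Thus $\frac{\perim(C)^2}{\area(C)} \geq 4v\tan\frac{\pi}{v} \geq 8\sqrt{3}$ for $v \leq 6$, with equality only for a regular hexagon. This already yields a stronger statement, since no diameter hypothesis is needed; moreover, replacing the Euclidean isoperimetric inequality by Chakerian's Theorem~\ref{thm:Chakerian} and the bound $4v\tan\frac{\pi}{v}$ by $4A_{M_{iso}}(v)$, the same argument goes through verbatim in any normed plane, which is the partial answer established here.

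For the general statement, where cells are arbitrary compact sets, the convexity-based argument collapses: Euler's formula and the very notion of ``number of sides'' lose their meaning, and a non-convex cell can acquire a small isoperimetric ratio only by borrowing perimeter from its neighbours. The natural route is to adapt Hales's proof of the honeycomb conjecture \cite{Hales1}. I would argue by contradiction, assuming every cell satisfies $\perim(C) < \left( 8\sqrt{3}\,\area(C) \right)^{1/2}$, and run a global area--perimeter accounting over a large disk $R\BB^2$: writing $\sum_{C \in \T(R)} \perim(C)$ as twice the total interior edge length plus a boundary term, the boundary term is $\mathcal{O}(R)$ by the estimates behind Lemma~\ref{lem:discrepancy}, while the floor $\diam(C) \geq 1$ bounds the number of cells meeting $R\BB^2$ and keeps the per-cell quantities from degenerating. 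One then seeks a Hales-type hexagonal isoperimetric inequality: a per-cell lower bound together with an error term that telescopes, so that in the limit $R \to \infty$ the assumed strict inequality contradicts the hexagonal value $8\sqrt{3}$.

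The hard part is precisely this last step, and it is where the full conjecture remains genuinely difficult. Two features obstruct a direct reduction to known results. First, the cells have \emph{different} sizes, whereas the honeycomb theorems of L.~Fejes T\'oth \cite{LFT} and Hales \cite{Hales1} concern cells of a common unit area; because the isoperimetric ratio is scale-invariant, one cannot rescale each cell to unit area without destroying the tiling, so an equal-area average-perimeter bound does not transfer to a varying-size maximum-ratio bound. Second, for non-convex cells the correct accounting of perimeter --- separating the contributions of convex and concave boundary arcs and charging the resulting deficit to neighbouring cells in a way that sums to a controllable quantity --- is exactly the technical heart of Hales's argument, and reproducing it for the functional $\frac{\perim^2}{\area}$ under only a diameter floor (rather than full normality) is the crux. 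I therefore expect a complete proof to require either a genuinely new hexagonal isoperimetric inequality tailored to the maximum isoperimetric ratio, or a reduction showing that an extremal (or near-extremal) tiling may be taken convex and normal, after which the argument of Theorem~\ref{thm:suboptimal} applies; establishing either is the main obstacle, and explains why only the convex, normal case is settled here.
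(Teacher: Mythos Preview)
The statement is a \emph{conjecture}; the paper does not prove it, and neither do you. What the paper establishes is the partial result in Theorem~\ref{thm:isoperimetric} (and the remark following Theorem~\ref{thm:isoperimetric2}): for convex, normal tilings the lower average---and hence the supremum---of the isoperimetric ratios is at least $I(\T_{hex})$, with the optimal tiling regular hexagonal in the Euclidean case. Your proposal arrives at exactly the same partial conclusion by essentially the same mechanism: Lemma~\ref{lem:Euler} forces a cell with at most six sides, and then the polygonal isoperimetric inequality (which is the Euclidean instance of Theorem~\ref{thm:Chakerian} combined with the monotonicity of $A_{M_{iso}}(n)$) gives the bound $8\sqrt{3}$ for that cell. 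This matches the paper's argument, including the normed-plane extension via Chakerian.

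Your treatment of the general (non-convex, non-normal) case is not a proof but an honest outline of obstructions, and that is appropriate: the paper likewise leaves the full Steinhaus conjecture open. Your identification of the two main difficulties---the lack of a common cell area, which blocks a direct transfer from Hales's equal-area honeycomb theorem, and the need for a Hales-type perimeter accounting without convexity---is accurate and coincides with the boundary of what the paper achieves. There is no gap to flag, because you do not claim to have closed one.
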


To state our result regarding this conjecture, we need Definition~\ref{defn:isoperim}.

\begin{definition}\label{defn:isoperim}
Let $\mathcal{T}$ be a tiling of a normed plane $\M$.
Let $\mathcal{T}(R)$ denote the family of cells of $\mathcal{T}$ in $R \BB^2$. Then the \emph{lower average isoperimetric ratio} of a cell of $\mathcal{T}$ is defined as
\[
\underline{I}(\mathcal{T})= \liminf_{R \to \infty} \frac{\sum_{C \in \mathcal{T}(R)} \frac{\perim_M(C)^2}{\area(C)}}{ \card( \mathcal{T}(C)) }.
\]
If we replace the $\liminf$ in the above definition by $\limsup$, we obtain the \emph{upper average isoperimetric ratio} $\overline{I}(\mathcal{T})$ of a cell. If these quantities are equal, the common value is called the \emph{average isoperimetric ratio} of a cell, denoted by $I(\T)$.
\end{definition}

Our result is the following.

\begin{theorem}\label{thm:isoperimetric}
For any normed plane $\M$ there is a hexagonal tiling $\T_{hex}$ of $\M$ such that for any convex, normal tiling $\T$ of $\M$, we have
\[
\underline{I}(\T) \geq I(\T_{hex}).
\]
Furthermore, if $\M$ is a Euclidean plane, then $\T_{hex}$ is a regular hexagonal tiling.
\end{theorem}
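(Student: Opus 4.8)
The plan is to mimic the proof of Theorem~\ref{thm:generalized} almost verbatim, replacing the $\alpha$-powered perimeter functional by the isoperimetric ratio functional, and exploiting that Chakerian's inequality (Theorem~\ref{thm:Chakerian}) is scale-invariant in exactly the way the isoperimetric ratio is. First I would observe that, unlike in Theorem~\ref{thm:suboptimal}, there is no need to normalize the cells to have unit area: for a convex $n$-gon $C$ with $M$-perimeter $L$, area $F$ and circumscribed polygon $C^*$ about $M_{iso}$ (same outer normals, area $f$), Theorem~\ref{thm:Chakerian} gives $L^2 \geq 4 f F$, hence $\frac{\perim_M(C)^2}{\area(C)} = \frac{L^2}{F} \geq 4 f \geq 4 A_{M_{iso}}(v(C))$, where $v(C)$ is the number of sides of $C$. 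This lower bound depends only on the combinatorial type $v(C)$, not on the size of $C$, which is what makes the argument go through without a unit-area assumption.

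Next, I would run the averaging exactly as in Section~\ref{sec:main}. For a convex, normal tiling $\T$ we have, using the bound above,
\[
\underline{I}(\T) = \liminf_{R\to\infty} \frac{\sum_{C\in\T(R)} \frac{\perim_M(C)^2}{\area(C)}}{\card(\T(R))} \geq 4 \liminf_{R\to\infty} \frac{\sum_{C\in\T(R)} A_{M_{iso}}(v(C))}{\card(\T(R))}.
\]
By Theorem~\ref{thm:Dowker1}, $\{A_{M_{iso}}(n)\}$ is a convex sequence, and by Lemma~\ref{lem:Euler} the upper average number of sides satisfies $\overline{v}(\T) \leq 6$; applying the discrete Jensen-type estimate exactly as in the proof of Theorem~\ref{thm:generalized} (the convexity of $\{A_{M_{iso}}(n)\}$ together with the constraint on the average of $v(C)$, plus the boundary cells being negligible by Lemmas~\ref{lem:maxsides} and~\ref{lem:discrepancy}) yields $\underline{I}(\T) \geq 4 A_{M_{iso}}(6)$. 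Note here I only need the weak Dowker property at $m=3,4,5$ and $n>6$, which for $M_{iso}$ follows from Dowker's theorem; the point is that the relevant lower barycentric bound at the value $6$ holds for the genuine convex sequence.

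For the matching upper bound, I would invoke Theorem~\ref{thm:Dowker2}: there is an $o$-symmetric hexagon $H$ circumscribed about $M_{iso}$ with $\area(H) = A_{M_{iso}}(6)$. Take $\T_{hex}$ to be a tiling by translates of $H$ (no rescaling needed). Every cell is a translate of $H$, which is homothetic (indeed equal) to its own circumscribed hexagon about $M_{iso}$, so equality holds in Theorem~\ref{thm:Chakerian} for each cell, giving $\frac{\perim_M(H)^2}{\area(H)} = 4\area(H) = 4 A_{M_{iso}}(6)$; since all cells are congruent, $I(\T_{hex}) = 4 A_{M_{iso}}(6)$, matching the lower bound. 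Finally, for the Euclidean case, $M = \BB^2$ so $M_{iso}$ is a Euclidean disk, and the extremal hexagon $H$ circumscribed about a disk with minimal area is the regular hexagon (equality in the isoperimetric-type bound $A_{\BB^2}(6) = 6\tan\frac{\pi}{6}$ forcing all the tangent arcs to be equal by the strict convexity of $\tan$, as in the proof of Lemma~\ref{lem:regmain}); hence $\T_{hex}$ is a regular hexagonal tiling. The only mild obstacle is bookkeeping the boundary cells in the $\liminf$ so that $\overline{v}(\T)\le 6$ can be used against a $\liminf$ of areas — but this is handled verbatim by Lemmas~\ref{lem:maxsides}, \ref{lem:discrepancy} and~\ref{lem:Euler}, exactly as in the proof of Theorem~\ref{thm:generalized}, since $A_{M_{iso}}(v(C))$ is uniformly bounded (Lemma~\ref{lem:maxsides} bounds $v(C)$) and the boundary cells number $\mathcal{O}(R)$ against $\card(\T(R)) = \Theta(R^2)$.
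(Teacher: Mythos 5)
Your proposal is correct and takes essentially the same route as the paper: the paper proves the first part precisely as the ``straightforward modification'' of the proof of Theorem~\ref{thm:generalized} that you spell out (Chakerian's inequality, Dowker's theorem for the convexity of $\{A_{M_{iso}}(n)\}$, and $\overline{v}(\T)\leq 6$ from Lemma~\ref{lem:Euler}), and it gets the Euclidean case exactly as you do, since the cells of $\T_{hex}$ are homothets of a minimum-area hexagon circumscribed about the isoperimetrix, which is a Euclidean disk. Your explicit remark that the isoperimetric ratio is scale-invariant, so no unit-area normalization is needed, is just the unstated content of that modification.
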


\begin{proof}
 The first part of Theorem~\ref{thm:isoperimetric} follows from a straightforward modification of the proof of Theorem~\ref{thm:generalized}. For the second statement we remark that, according to this proof, the cells of $\T_{hex}$ are homothetic copies of a minimum area hexagon circumscribed about the isoperimetrix of the norm. Thus, if the norm is Euclidean, then these hexagons are regular.
\end{proof}

Theorem~\ref{thm:isoperimetric} can be stated also in the following, slightly more general form.

\begin{theorem}\label{thm:isoperimetric2}
For any normed plane $\M$ there is a hexagonal tiling $\T_{hex}$ of $\M$ such that for any convex tiling $\T$ of $\M$ satisfying $\overline{v}(\T) \leq 6$, we have
\[
\underline{I}(\T) \geq I(\T_{hex}).
\]
\end{theorem}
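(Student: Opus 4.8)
The plan is to rerun, essentially verbatim, the proof of Theorem~\ref{thm:generalized} in the form already adapted for Theorem~\ref{thm:isoperimetric}, after observing that normality of $\T$ entered that argument only through Lemma~\ref{lem:Euler}, whose sole purpose was to guarantee $\overline{v}(\T)\le 6$; since this bound is now a hypothesis, all the remaining steps apply unchanged. Concretely, fix a convex tiling $\T$ of $\M$ with $\overline{v}(\T)\le 6$ (so that $\T(R)$ is a finite, nonempty family for large $R$, which is what is needed for the relevant averages to be defined). For a cell $C\in\T$ let $v(C)$ be its number of sides and let $C^{*}$ be the convex $v(C)$-gon circumscribed about the isoperimetrix $M_{iso}$ whose sides have the same outer unit normals as the sides of $C$. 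By Chakerian's inequality (Theorem~\ref{thm:Chakerian}), $\perim_M(C)^{2}\ge 4\,\area(C^{*})\,\area(C)$, hence $\frac{\perim_M(C)^{2}}{\area(C)}\ge 4\,\area(C^{*})\ge 4\,A_{M_{iso}}(v(C))$, the last step being the definition of $A_{M_{iso}}(v(C))$. Averaging over $C\in\T(R)$ and letting $R\to\infty$ gives
\[
\underline{I}(\T)\ \ge\ 4\,\liminf_{R\to\infty}\frac{\sum_{C\in\T(R)}A_{M_{iso}}(v(C))}{\card(\T(R))}.
\]

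I would then invoke Dowker's theorem (Theorem~\ref{thm:Dowker1}): the sequence $\{A_{M_{iso}}(n)\}_{n\ge 3}$ is convex, and it is also non-increasing (a vertex of an optimal circumscribed $n$-gon lying outside $M_{iso}$ can be shaved off by a separating line, producing a circumscribed $(n+1)$-gon of no larger area). Hence this sequence admits a supporting line $n\mapsto A_{M_{iso}}(6)+c\,(n-6)$ at $n=6$ with slope $c\le 0$, giving $A_{M_{iso}}(v(C))\ge A_{M_{iso}}(6)+c\,(v(C)-6)$ for every $C$. Averaging over $\T(R)$,
\[
\frac{\sum_{C\in\T(R)}A_{M_{iso}}(v(C))}{\card(\T(R))}\ \ge\ A_{M_{iso}}(6)+c\left(\frac{\sum_{C\in\T(R)}v(C)}{\card(\T(R))}-6\right).
\]
Choosing $R_{k}\to\infty$ that realizes the $\liminf$ on the left and passing to a further subsequence along which $\frac{\sum_{C\in\T(R_k)}v(C)}{\card(\T(R_k))}\to L$, we have $L\le\overline{v}(\T)\le 6$ and $c\le 0$, so $c(L-6)\ge 0$; taking the limit yields $\liminf_{R\to\infty}\frac{\sum_{C\in\T(R)}A_{M_{iso}}(v(C))}{\card(\T(R))}\ge A_{M_{iso}}(6)$, and therefore $\underline{I}(\T)\ge 4\,A_{M_{iso}}(6)$.

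To finish I would construct $\T_{hex}$: by Theorem~\ref{thm:Dowker2} applied with $m=3$ there is an $o$-symmetric hexagon $H$ circumscribed about $M_{iso}$ with $\area(H)=A_{M_{iso}}(6)$, and since a centrally symmetric hexagon tiles the plane by translation, I take $\T_{hex}$ to be such a tiling (this is exactly the tiling used in the proof of Theorem~\ref{thm:isoperimetric}). Its cells are all congruent to $H$, so $I(\T_{hex})$ exists and equals $\perim_M(H)^{2}/\area(H)$. As $H$ is itself circumscribed about $M_{iso}$, the hexagon associated to $H$ in Theorem~\ref{thm:Chakerian} is $H$ again, so the equality case there gives $\perim_M(H)^{2}=4\,\area(H)^{2}$, whence $I(\T_{hex})=4\,\area(H)=4\,A_{M_{iso}}(6)$. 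Combining with the previous paragraph, $\underline{I}(\T)\ge 4\,A_{M_{iso}}(6)=I(\T_{hex})$, as claimed.

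The inequality chain and the construction of $\T_{hex}$ are routine, being transcriptions of parts of the proof of Theorem~\ref{thm:generalized}. The one place requiring care — and the only spot where the weakened hypothesis $\overline{v}(\T)\le 6$ stands in for normality — is the $\liminf/\limsup$ bookkeeping above: the average number of sides is controlled only in $\limsup$, whereas the quantity to be bounded below is a $\liminf$, so one must pass to a common subsequence; it is precisely the sign $c\le 0$ of the supporting slope, together with $L\le 6$, that makes the estimate close. Beyond this I anticipate no real obstacle.
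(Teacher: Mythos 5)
Your proposal is correct and follows essentially the same route as the paper, which proves Theorem~\ref{thm:isoperimetric2} by exactly this ``straightforward modification'' of the proof of Theorem~\ref{thm:generalized}: Chakerian's inequality reduces everything to the average of $A_{M_{iso}}(v(C))$, Dowker's theorem (with $\alpha=1$, so no extra hypothesis on the norm is needed) together with $\overline{v}(\T)\le 6$ gives the lower bound $4A_{M_{iso}}(6)$, and Theorem~\ref{thm:Dowker2} plus the equality case of Chakerian produce the hexagonal tiling attaining it. Your supporting-line argument at $n=6$ with nonpositive slope, and the subsequence handling of the $\liminf$/$\limsup$, simply make explicit the Jensen-type step the paper leaves implicit.
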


\begin{remark}
We note that Theorem~\ref{thm:isoperimetric2} remains valid if we replace $\overline{v}(\T)$ by $\underline{v}(\T)$ and $\underline{I}(\T)$ by $\overline{I}(\T)$. Furthermore, it is also valid if we replace the condition $\overline{v}(\T) \leq 6$ by the property that $\T$ has a cell with at most six sides, and the inequality $\underline{I}(\T) \geq I(\T_{hex})$ by $\sup \left\{ \frac{\perim_M(C)^2}{\area(C)} : C \hbox{ is a cell of } \T \right\} \geq I(\T_{hex})$.
\end{remark}

There is a construction in \cite{GS} showing that it is possible to tile the plane with convex heptagons. In this tiling, as we move away from the origin, the cells get longer and thinner, showing that it is not normal. On the other hand, an elementary geometric consideration proves Remark~\ref{rem:ratio}, where for any convex disk $K$ in $\Re^2$, we denote by $I(K)$, $\ir(K)$ and $\cirr(K)$ the isoperimetric ratio $\frac{\perim^2(K)}{\area(K)}$, the inradius and the circumradius of $K$ respectively; the latter two quantities are defined as the radii of a largest Euclidean disk contained in $K$ and of the smallest Euclidean disk containing $K$, respectively.

\begin{remark}\label{rem:ratio}
For any $\lambda > 0$ there is some $\mu > 0$ such that if $I(K) \leq \lambda$, then $\frac{\cirr(K)}{\ir(K)} \leq \mu$.
\end{remark}

\begin{proof}
Let $\F$ denote the family of convex disks, containing $o$, with unit perimeter and isoperimetric ratio at most $\lambda$. Then for any $K \in \F$, $\area(K) \geq \frac{1}{\lambda}$ and $K \subset \BB^2$, and thus, $\F$ is universally bounded. Since $\cirr(K) \leq 1$ for every $K \in \F$, we need to show that there is some $\tau > 0$ such that $\ir(K) \geq \tau$ for all $K \in \F$. Suppose for contradiction that this is not so. Then, there is a sequence $\{ K_n\}$ of elements of $\F$ such that $\ir(K_n) \to 0$. It is known (see e.g. p. 215, Ex. 6.2 of \cite{YB}) that for any convex disk we have $\ir(K) \geq \frac{w(K)}{3}$, where $w(K)$ is the minimal width of $K$. Thus, we have $w(K_n) \to 0$. But this, combined with $K_n \subset \BB^2$, clearly contradicts our assumption that $\area(K_n) \geq \frac{1}{\lambda}$ for all values of $n$.
\end{proof}

In light of Theorem~\ref{thm:isoperimetric2}, Remark~\ref{rem:ratio} and the construction in \cite{GS}, it seems meaningful to ask the following.

\begin{question}\label{ques:average}
Let $\T$ be a convex tiling of $\Re^2$ such that there are universal constants $\lambda, \mu > 0$ such that for every cell $C$ of $\T$, we have $\ir(C) \geq \lambda$ and $\frac{\cirr(C)}{\ir(C)} \leq \mu$. Is it true that $\overline{v}(\T) \leq 6$ (resp. $\underline{v}(\T)) \leq 6$? If the answer to both questions is negative, is it true that $\T$ contains a cell with at most six sides?
\end{question}

In the following example we show that if we use a square instead of $\BB^2$ in the definition of average, the answer for upper average is negative. To do it, let $\Sbf$ be the square with vertex set $\{ -1, 1 \}^2$, and for any convex tiling $\T$, with the notation $\T_{\square}(R) = \{ C \in \T : C \subseteq R \Sbf\}$, we set
\begin{equation}
\overline{v}_{\square}(\T) = \limsup_{R \to \infty} \frac{\sum_{C \in \T_{\square}(R)} v(C)}{\card( \T_{\square}(R))},
\end{equation}
where $v(C)$ denotes the number of sides of $C$.

\begin{example}\label{ex:loweraverage}
We define a convex tiling $\T$ of $\Re^2$ in consecutive steps. More specifically, we start with the family $\T_0 = \{ \Sbf \}$, and for every positive integer $k$, in the $k$th step we add some cells to the family $\F_{k-1}$ of the already defined cells to get the family $\F_k$. We will do it in such a way that after each step, $
\bigcup \F_k = N_k \Sbf$ for some positive integer $N_k$. Each time we use one of Step A and Step B, described below:

\noindent
\emph{Step A}: Let $\bigcup \F_{k-1}= N_{k-1} \Sbf$, and let the sides of this square be $E_1, E_2, E_3, E_4$. Consider a rectangle $R$ of side lengths $N_{k-1}$ and $2 N_{k-1}$. We set $\T_k = \T_{k-1} \cup \{ R_1, R_2, R_3, R_4 \}$, where each $R_i$ is congruent to $R$, and one half of a longer side of $R_i$ is $E_i$. We do it in such a way that the rectangles and $\bigcup \T_{k-1}$ are pairwise nonoverlapping (see Figure~\ref{fig:squares}).

\noindent
\emph{Step B}: Let $\bigcup \F_{k-1}= N_{k-1} \Sbf$. We fill the region obtained as the closure of $(N_{k-1}+2) \Sbf \setminus (N_{k-1} \Sbf)$ by $4N_{k-1}+4$ translates of $\Sbf$.

\begin{figure}[ht]
\begin{center}
\includegraphics[width=0.5\textwidth]{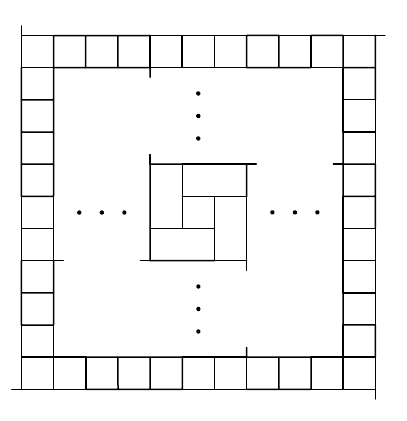}
\caption{An illustration for the construction in Example~\ref{ex:loweraverage}. The four rectangles surrounding the central square are obtained by applying Step A. The squares in the outermost layer are obtained by applying Step B.}
\label{fig:squares}
\end{center}
\end{figure}

In the following part we describe a suitable infinite sequence of $A$s and $B$s, where the k$th$ element is $A$ or $B$, if $\T_k$ is obtained from $\T_{k-1}$ by applying Step A or Step B, respectively. Then we set $\T = \bigcup_{k=1}^{\infty} \T_k$. Before describing this sequence, we remark that independently of its definition, if an edge $E_1$ of a cell $C_1 \in \T$ intersects an edge $E_2$ of another cell $C_2$ in a nondegenerate segment, then $E_1 \subseteq E_2$ or $E_2 \subseteq E_1$. Thus, we can apply an arbitrarily small deformation to the cells of $\T$ to obtain an edge-to-edge convex tiling $\T'$ of $\Re^2$ with the same combinatorial properties; this deformation can be done subsequently after each step. As a consequence, for simplicity, if a side of a polygon, appearing in one of the two steps above, is dissected into smaller segments by the vertices of the neighbors of the cell, we regard each smaller segment as a separate edge of the cell. 
Using this interpretation, in the following, we call the edges of $\T$ in $\bd \left( \bigcup \T_k \right)$ \emph{boundary edges} of $\T_k$, and the edges of $\T$ in $\inter \left( \bigcup \T_k \right)$ \emph{interior edges} of $\T_k$. 

Fix any positive number $\nu > 0$. First, consider the case that we apply Step A $k_1$ times, and after that we apply Step B once. Then $N_{k_1}=3^{k_1}$, and thus, $\T_{k_1}$ has $4\cdot 3^{k_1}$ boundary edges. In addition, $\T_{k_1}$ consists of $4k_1+1$ cells. We count the number of interior edges: Note that $\T_0$ has $4$ edges. Then in each step before the $k_1$th one we add $12$ edges, which are all interior edges of $\T_{k_1}$. Finally, in the $k_1$th step we add four interior edges of $\T_{k_1}$. Thus, in total, $\T_{k_1}$ has $12k_1-4$ interior edges. Now, if we set $R_1=N_{k_1}+\frac{1}{2}$, then
\[
v_{\square}(R_1) = \frac{\sum_{C \in \T_{\square}(R_1)} v(C)}{\card( \T_{\square}(R_1))} = \frac{4\cdot 3^{k_1}+2(12k_1-4)}{4k_1+1}.
\]
Observe that if $k_1$ is sufficiently large, then $v_{\square}(R_1) > \nu$. We choose $k_1$ to satisfy this inequality, and fix this value in the following.

Now we apply Step A $k_2-k_1-1$ times and after that we apply Step B once. Furthermore, we set $R_2 = N_{k_2}+\frac{1}{2}$. Note that if $k_2$ is sufficiently large, then the contribution of $\T_{k_1}$ to both the numerator and the denominator of the fraction defining $v_{\square}(R_2)$ is negligible, and thus, by the argument in the previus paragraph, we can choose some sufficiently large integer $k_2$ such that $v_{\square}(R_2) > \nu$.

Continuing this process, we obtain a tiling $\T$, and a sequence $\{ R_n \}$ with $R_n \to \infty$, such that $v_{\square}(R_n) > \nu$ for all values of $n$. This yields that $\overline{v}_{\square}(\T) \geq \mu$.
\end{example}

\section{An additional remark}\label{sec:remark}

Estimating the numbers of sides of cells of tilings has been a longstanding problem of geometry. The classical book of Gr\"unbaum and Shephard \cite{GS} shows that it is possible to tile the Euclidean plane with convex heptagons, but that any normal tiling contains infinitely many cells with at most six sides. Here we recall that a (not necessarily convex) tiling $\T$ of $\Re^2$ is called \emph{normal} if
\begin{itemize}
\item[(1)] every cell is homeomorphic to $\BB^2$;
\item[(2)] the intersection of every pair of cells is connected;
\item[(3)] the cells are \emph{universally bounded}, i.e. there are universal constants $0 < \hat{r} < \hat{R}$ such that every cell contains a Euclidean disk of radius $\hat{r}$ and is contained in a Euclidean disk of radius $\hat{R}$.
\end{itemize}
The statement of Gr\"unbaum and Shephard was reproved in a simpler way by Kazanci and Vince in \cite{KV}. 
For convex, normal mosaics, more is known. More specifically, the second part of Lemma~\ref{lem:Euler}, stating that the upper average number of sides of a cell of such a tiling is at most six is proved in the book \cite{Lagerungen} of L. Fejes T\'oth in a different way. The fact that if all cells have at least six sides, then the number of cells with more than six sides is finite is due to Stehling \cite{Stehling}. Akopyan \cite{Akopyan} gave a quantitative estimate for the number of these sides, while Frettl\"oh et al. \cite{FGL} showed that the order of magnitude given in \cite{Akopyan} is sharp.

We remark that the proofs of the Lemmas~\ref{lem:maxsides}-\ref{lem:Euler} (as well as Definition~\ref{defn:averagev}) can be generalized for all normal mosaics in a straightforward way. Thus, we gave a short proof of the following statement, generalizing the result of L. Fejes T\'oth for any normal mosaic.

\begin{theorem}\label{thm:upperaverage}
For any normal mosaic $\T$ in $\Re^2$, we have $\overline{v}(\T) \leq 6$.
\end{theorem}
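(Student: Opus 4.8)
The plan is to imitate the Euclidean argument of L. Fejes T\'oth but to replace the ingredients that genuinely use convexity (the Euler-formula bound $E \le 3F - 6$) by the observation that for a general normal mosaic the edge–face incidences behave similarly once we pass to the associated $1$-skeleton. Concretely, I would fix universal constants $0 < \hat r < \hat R$ as in the normality definition and, for each $R > 2\hat R$, form the planar graph $G_R$ whose vertices are the vertices of the cells contained in $R\BB^2$, whose edges are the edges of those cells, and whose bounded faces are those cells. The three statements I need are exactly the analogues of Lemmas~\ref{lem:maxsides}, \ref{lem:discrepancy} and \ref{lem:Euler}: (i) each cell has boundedly many vertices (this follows from universal boundedness by the same area estimate as in Lemma~\ref{lem:maxsides}, since a cell together with all cells sharing a vertex with it lies in a disk of radius $3\hat R$ and every cell has area at least $\hat r^2\pi$); (ii) the number of cells meeting $R\Sph^1$ is $\Theta(R)$, hence the number of boundary edges of $G_R$ is $\mathcal O(R)$, exactly as in Lemma~\ref{lem:discrepancy}; (iii) $V(G_R), E(G_R), F(G_R)$ are all $\Theta(R^2)$, from the sandwich $(R-2\hat R)\BB^2 \subseteq \bigcup\T(R) \subseteq (R+2\hat R)\BB^2$ and an area estimate.

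With these in hand I would run the proof of Lemma~\ref{lem:Euler} verbatim. First deal with low-degree vertices: a vertex of degree $\le 2$ must lie on $\bd(\bigcup\T(R))$ and hence be incident to boundary edges, so there are only $\mathcal O(R)$ of them; SP-reduce (suppress) such vertices to pass to a graph $G_R'$ with minimum degree $\ge 3$, changing $V$ and $E$ only by $\mathcal O(R)$ and leaving $F$ unchanged. Here the only point where normality (rather than convexity) is used is that the cells are homeomorphic to $\BB^2$ with connected pairwise intersections, which guarantees that $G_R$ (after the reduction) is a genuine simple plane graph to which Euler's formula applies; this is where I would be most careful. Then Euler's formula plus $\deg \ge 3$ gives $E(G_R') \le 3F(G_R') - 6$, hence $E(G_R) \le 3F(G_R) + \mathcal O(R)$. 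Since each cell of $\T(R)$ has $v(C)$ edges and each edge lies on at most two cells, $\sum_{C\in\T(R)} v(C) \le 2E(G_R)$, so
\[
v_R(\T) := \frac{\sum_{C\in\T(R)} v(C)}{\card(\T(R))} \le \frac{2E(G_R)}{F(G_R)} \le 6 + \mathcal O\!\left(\frac 1R\right),
\]
using $E(G_R), F(G_R) = \Theta(R^2)$. Taking $\limsup_{R\to\infty}$ yields $\overline v(\T) \le 6$.

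The main obstacle, as flagged, is justifying that Euler's formula is applicable to $G_R$ (or $G_R'$): for a convex tiling the cells are polytopes and this is immediate, whereas for a general normal mosaic one must invoke conditions (1) and (2) of the definition to ensure the cell structure of $\bigcup\T(R)$ is that of a CW-decomposition of a closed disk with no degenerate faces or repeated edges, so that the $1$-skeleton is a plane graph with the cells as its bounded faces. Once that is settled — which the excerpt essentially asserts is routine ("can be generalized for all normal mosaics in a straightforward way") — nothing else in the argument touches convexity, and the counting goes through unchanged. I would therefore present the proof as: "By the generalizations of Lemmas~\ref{lem:maxsides}--\ref{lem:Euler} to normal mosaics, the argument of Lemma~\ref{lem:Euler} gives $v_R(\T) \le 6 + \mathcal O(1/R)$, whence $\overline v(\T) \le 6$," spelling out only the verification that $G_R$ is a plane graph to which Euler applies.
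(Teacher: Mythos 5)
Your proposal is correct and follows essentially the same route as the paper: the paper's proof of Theorem~\ref{thm:upperaverage} consists precisely of the remark that Lemmas~\ref{lem:maxsides}--\ref{lem:Euler} (and Definition~\ref{defn:averagev}) carry over verbatim to general normal mosaics, which is the Euler-formula counting you rerun, with your careful point (that conditions (1) and (2) of normality make the $1$-skeleton a plane graph to which Euler's formula applies) being exactly the step the paper leaves implicit.
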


\section{Acknowledgments}

The authors thank Frank Morgan for a valuable observation, stated as Remark~\ref{rem:Morgan} in the paper, and an unknown referee for many useful observations.


\begin{thebibliography}{99}

\bibitem{ACY85} A. Aggarwal, J.S. Chang and C.K. Yap, \emph{Minimum area circumscribing polygons}, Visual Comput. \textbf{1} (1985), 112-117.

\bibitem{Akopyan} A. Akopyan, \emph{On the number of non-hexagons in a planar tiling}, C.R. Math. Acad. Sci. Paris \textbf{356} (2018), 412-414.

\bibitem{BFVV} D. Bucur, I. Fragal\`{a}, B. Velichkov and G. Verzini, \emph{On the honeycomb conjecture for a class of minimal convex partitions}, Trans. Amer. Math. Soc. \textbf{370} (2018), 7149-7179.

\bibitem{BF19} D. Bucur and I. Fragal\`{a}, \emph{On the honeycomb conjecture for Robin Laplacian eigenvalues}, Commun. Contemp. Math. \textbf{21} (2019), 1850007.

 \bibitem{Busemann} H. Busemann, \emph{The isoperimetric problem in the Minkowski plane}, Amer. J. Math. \textbf{69} (1947), 863–871.

\bibitem{GDC} G.D. Chakerian, \emph{The isoperimetric problem in the Minkowski plane}, Amer. Math. Monthly \textbf{67} (1960), 151-164.

\bibitem{CFG} H. Croft, K. Falconer and R. Guy, \emph{Unsolved problems in geometry}, Springer-Verlag, New York 1991. 

		%\bibitem{DePane} A. DePano, \emph{On $k$-envelopes and shared edges}, Technical Report. Department of Electric Engineering and Computer Science. The Johns Hopkins University, 1984.

\bibitem{CHD} C.H. Dowker,~\emph{On minimum circumscribed polygons}, Bull. Amer. Math. Soc. \textbf{50} (1944), 120-122.

\bibitem{LFT2} L. Fejes T\'oth, \emph{\"Uber die dichteste Kugellagerung}, Math. Z. \textbf{48} (1942), 676–684.
		
\bibitem{LFT} L. Fejes T\'oth, \emph{\"Uber das k\"urzeste Kurvennetz, das eine Kugeloberfl\"ache in fl\"achengleiche konvexe Teil zerlegt}, Math. Naturwiss. Anz. Ungar. Akad. Wiss. \textbf{62} (1943), 349–354.

\bibitem{FTL} L. Fejes T\'oth, \emph{Some packing and covering theorems}, Acta Sci. Math. (Szeged) \textbf{12} (1950), 62--67.

\bibitem{Lagerungen} L. Fejes T\'oth, \emph{Lagerungen in der Ebene, auf der Kugel und im Raum}, Die Grundlehren der Mathematischen
Wissenschaften in Einzeldarstellungen mit besonderer Ber\"ucksichtigung der Anwendungsgebiete, Band LXV.
Springer-Verlag, Berlin-G\"ottingen-Heidelberg, 1953.

\bibitem{FGL} D. Frettl\"oh, A. Glazyrin and Z. L\'angi, \emph{Hexagon tilings of the plane that are not edge-to-edge}, Acta Math. Hungar. \textbf{164} (2021), 341-349.

\bibitem{GS} B. Gr\"unbaum and G.C. Shephard, \emph{Tilings and Patterns}, W.H. Freeman and Company, New York, USA, 1987.

\bibitem{Hales1} T.C. Hales, \emph{The honeycomb conjecture}, Discrete Comput. Geom. \textbf{25} (2001), 1–22.
    
\bibitem{KV} D. Kazanci and A. Vince, \emph{A property of normal tilings}, Amer. Math. Monthly \textbf{111} (2004), 813-816.
		
\bibitem{Ludwig} M. Ludwig. \emph{Asymptotic approximation of convex curves}, Arch. Math. \textbf{63} (1994), 377–
384.

\bibitem{Morgan} F. Morgan, \emph{The hexagonal honeycomb conjecture}, Trans. Amer. Math. Soc. \textbf{351} (1999), 1753–1763.
		
\bibitem{Raz} T. R\"az, \emph{The silent hexagon: explaining comb structures}, Synthese \textbf{194} (2017), 1703–1724.
		
\bibitem{Rogers} C.A. Rogers, \emph{The closest packing of convex two-dimensional domains}, Acta Math. \textbf{86} (1951), 309--321.

\bibitem{Rogers1} C.A. Rogers, \emph{Erratum to: The closest packing of convex two-dimensional domains, corrigendum}, Acta Math. \textbf{104} (1960), 305-306.


\bibitem{Schaffer} J.J. Schaffer, \emph{Geometry of spheres in normed spaces}, Lecture Notes in Pure and Applied Mathematics \textbf{20}, Marcel Dekker Inc., New York-Basel, 1976.

\bibitem{Schulte} E. Schulte, \emph{Tilings}, Handbook of convex geometry, Vol. A, B, 899–932, North-Holland, Amsterdam, 1993. 

\bibitem{Stehling} T. Stehling. \emph{\"Uber kombinatorische und graphentheoretische Eigenschaften normaler Pasterungen}. ProQuest LLC,
Ann Arbor, MI, 1989. Thesis (Dr.rer.nat), Universit\"at Dortmund (Germany).

\bibitem{Thompson} A.C. Thompson, \emph{Minkowski geometry}, Encyclopedia of Mathematics and Its Applications \textbf{63}, Cambridge University Press, New York, USA, 1996.
		
\bibitem{Varro} M.T. Varro, \emph{On Agriculture}, Loeb Classical Library, 1934.

\bibitem{YB} I.M. Yaglom and V.G. Boltyanskii, \emph{Convex figures}, (Translated by P.J. Kelly and L.F. Walton), Holt, Rinehart and Winston, New York, 1961.

\bibitem{Zhang} T. Zhang and K. Ding, \emph{A new proof of Honeycomb Conjecture by fractal geometry methods}, Front. Mech. Eng. \textbf{8} (2013), 367–370.

\bibitem{Zong} C. Zong, \emph{The simultaneous packing and covering constants in the plane}, Adv. Math. \textbf{218} (2008), 653–672.
		
\end{thebibliography}
\end{document}